\newtheorem{theorem}{Theorem}[section]
\newtheorem{lemma}[theorem]{Lemma}
\newtheorem{proposition}[theorem]{Proposition}
\newtheorem{corollary}[theorem]{Corollary}
\theoremstyle{definition}
\newtheorem{definition}[theorem]{Definition}
\newtheorem{remark}[theorem]{Remark}
\DeclareMathOperator{\Tr}{Tr}
\DeclareMathOperator{\End}{End}
\DeclareMathOperator{\Hom}{Hom}
\DeclareMathOperator{\Der}{Der}
\DeclareMathOperator{\GL}{GL}
\DeclareMathOperator{\ad}{ad}
\DeclareMathOperator{\rank}{rank}
\DeclareMathOperator{\Spec}{Spec}
\DeclareMathOperator{\Span}{Span}
\DeclareMathOperator{\Rep}{Rep}
\DeclareMathOperator{\coker}{coker}
\DeclareMathOperator{\im}{im}
\DeclareMathOperator{\ch}{ch}
\DeclareMathOperator{\rad}{rad}
\DeclareMathOperator{\sm}{sm}
\newcommand{\red}{\mathrm{red}}
\newcommand{\inc}{\mathrm{inc}}
\newcommand{\fsl}{\mathfrak{sl}}
\newcommand{\aniso}{\mathrm{aniso}}
\newcommand{\Sym}{\mathrm{Sym}}
\newcommand{\Ass}{\mathsf{Assoc}}
\newcommand{\Var}{\mathsf{Var}}
\newcommand{\Inc}{\mathsf{Inc}}
\newcommand{\Leib}{\mathsf{Leib}}
\newcommand{\Comm}{\mathsf{Comm}}
\newcommand{\Lie}{\mathsf{Lie}}
\newcommand{\Type}{\mathsf{Type}}
\numberwithin{equation}{section}
\title{Geometric Rigidity in Moduli Stacks of Algebras}
\author{Atabey Kaygun}
\email{kaygun@itu.edu.tr}
\address{Istanbul Technical University, Istanbul, Turkey}
\begin{document}

\begin{abstract}
  We study quadratic moduli schemes $X$ of algebra laws on a fixed vector space $W$ under the
  transport-of-structure action of $\GL(W)$ on $\Hom(W^{\otimes 2},W)$. We construct an intrinsic
  three-term deformation complex on $X$ whose fibers encode transverse first-order classes and
  primary obstructions, and whose cohomology agrees on the operadic loci with the standard
  low-degree deformation cohomology (à la Gerstenhaber and Nijenhuis--Richardson). We then define a
  canonical quadratic map
  $\kappa^{\text{inc}}_{2,\mu}\colon H^2_{\text{inc}}(\mu)\to H^3_{\text{inc}}(\mu)$ that controls
  second-order lifts modulo isotriviality. If $\mu$ is smooth point in a reduced component and
  $\bigl(\kappa^{\text{inc}}_{2,\mu}\bigr)^{-1}(0)=\{0\}$, then the $G$-orbit of $\mu$ is Zariski
  open in that component. This provides a coordinate-free explanation of Richardson-type geometric
  rigidity even when the second deformation cohomology does not vanish.
\end{abstract}

\maketitle

\section*{Introduction}

Building on~\cite{Kaygun2025}, we give an intrinsic criterion for geometric rigidity in moduli
stacks of algebra laws. From a $\GL$-invariant quadratic presentation of algebraic laws, we
construct an incidence deformation complex and extract a canonical quadratic obstruction map
$\kappa^{\inc}_{2,\mu}$ governing second-order lifts of transverse first-order classes.  We show
that at a smooth point $\mu$ of a reduced irreducible component of the moduli scheme,
\emph{anisotropicity} of $\mu$ (i.e. $\bigl(\kappa^{\inc}_{2,\mu}\bigr)^{-1}(0)=\{0\}$) forces
Zariski openness of the $\GL$-orbit. This explains how geometric rigidity can persist even when
second deformation cohomology does not vanish.

Let $W$ be a finite-dimensional $\Bbbk$--vector space, set $W^{2,1}:=\Hom_\Bbbk(W^{\otimes 2},W)$,
and let $G:=\GL(W)$ act on $W^{2,1}$ by transport of structure. Following
Gabriel~\cite{Gabriel1972}, we consider a class of algebra laws cut out by a $G$--stable closed
subscheme $X\subseteq A_W$, where $A_W$ is the affine space of bilinear multiplications of the
chosen symmetry type (symmetric, skew-symmetric, or neither), and $X=\Var(Q)$ is defined by a
finite-dimensional $G$--stable subspace $Q\subseteq \Sym^2(A_W^\vee)$.  We write $[X/G]$ for the
corresponding quotient stack.\footnote{All constructions in the paper are $G$-equivariant on $X$,
  hence descend to $[X/G]$. In particular, an open orbit $G\cdot\mu\simeq G/H$ in $Z_{\red}$
  corresponds to an open substack $[G/H]$ of the induced component of $[X/G]$.} Then
$G(\Bbbk)$--orbits in $X(\Bbbk)$ parametrize isomorphism classes of algebra laws over $\Bbbk$.  A
basic problem is to detect when $\mu\in X(\Bbbk)$ has Zariski open orbit in its irreducible
component, i.e. when $\mu$ is \emph{geometrically rigid}.  The standard sufficient condition of
\emph{cohomological rigidity}, namely the vanishing of second deformation cohomology, is often too
strong: Richardson constructed families of geometrically rigid Lie algebras with
$H^2_{\Lie}(\mu)\neq 0$~\cite{Richardson1967}.

Polarization yields a $G$--equivariant bilinear evaluation map $\Theta\colon A_W\times A_W\to
Q^\vee$ and a canonical three-term complex on $X$, the \emph{incidence deformation
complex}~\eqref{eq:incidence-complex}.  Its fiber cohomology in degrees $2$ and $3$ encodes
first-order deformations modulo isotriviality and the primary obstruction space which agrees with
the usual low-degree deformation spaces on operadic loci~\cite{Kaygun2025}.  We then isolate the
second-order obstruction inherent in this complex: Proposition~\ref{prop:kappa-well-defined}
packages it into a canonical quadratic map $\kappa^{\inc}_{2,\mu}\colon H^2_{\inc}(\mu)\to
H^3_{\inc}(\mu)$, whose vanishing detects second-order liftability modulo isotriviality. Our main
result is that anisotropy forces geometric rigidity: In Theorem~\ref{thm:anisotropy-open-orbit} we
show that if $\mu\in (Z_{\red})_{\sm}$ is anisotropic on a reduced component $Z_{\red}$, then the
orbit $G\cdot\mu$ is Zariski open in $Z_{\red}$.

We illustrate this mechanism in the Lie case by constructing an anisotropic point with nontrivial
second cohomology: Proposition~\ref{prop:sl2-sym14-anisotropic} produces a Lie algebra $L$ with
$H^2_{\Lie}(L)\cong\mathbb C$ and nonzero quadratic obstruction.  Richardson's stability criterion
then confirms geometric rigidity~\cite{Richardson1967}.
%, and the explicit transvectant description uses~\cite{Chipalkatti2006}.

Finally, we record two computable invariants on the open-orbit regime.  First, the $G$--equivariant
Gram morphism $\Gamma\colon A_W\to \Sym^2(W^\vee)$ induces a determinantal stratification by rank.
In Theorem~\ref{thm:dense-orbit-implies-generic-gram-rank} we show that on a component containing
an open orbit, the Gram rank equals the generic rank of the component on that orbit.  Next, in
Theorem~\ref{thm:chern-character-open-orbit}, we get a canonical Chern character dentity in
equivariant intersection theory expressed in the Chow ring in
$A_{G_\mu}^\ast(\mathrm{pt})\otimes\mathbb Q$, by restricting the incidence complex to an open
orbit $U_0\simeq G/G_\mu$.

\subsection*{What is known?}

The algebro--geometric approach to moduli of algebra laws on a fixed vector space $W$ goes back at
least to Gabriel \cite{Gabriel1972}: one studies the transport-of-structure action of $G=\GL(W)$ on
an affine parameter space of structure constants and analyzes orbit geometry inside the closed
subscheme cut out by the defining identities.  In very small dimensions this program is often
tractable and has produced explicit classifications together with degeneration pictures in several
operadic settings, notably for associative and Lie laws
\cite{Mazzola1979,GrunewaldOHalloran1988,GozeKhakimdjanov1996}, and a complete lists for
two--dimensional algebras \cite{KaygorodovVolkov2019}.  Comparable low-dimensional classifications
exist for other quadratic varieties of laws. For instance, nilpotent complex Leibniz algebras in
dimensions up to $4$ \cite{AlbeverioOmirovRakhimov2005}, $4$--dimensional Jordan algebras
\cite{Martin2012}, Novikov algebras in small dimensions \cite{BurdeDeGraaf2013}, and nilpotent Lie
algebras up to dimension $7$ \cite{Gong1998}.  From the incidence--variety perspective developed in
\cite{Kaygun2025}, such low-dimensional lists may be viewed as a family of concrete test cases in
which orbit closures, defect strata, and the tangent--obstruction package are simultaneously
computable and can be compared directly.  Beyond these regimes, however, complete classifications
are exceptional: Drozd’s tame--wild dichotomy suggests that the general classification problem is
typically of wild representation type \cite{Drozd1980}, so one should not expect a reasonable
global enumeration of orbits.

Locally at a point $\mu$ of the moduli scheme, the geometry is controlled by deformation
cohomology.  In the associative case, Gerstenhaber identified first-order deformations with
Hochschild $2$-cocycles and primary obstructions with Hochschild $3$-classes
\cite{Gerstenhaber1964}.  In the Lie case, Nijenhuis and Richardson gave the parallel description
via the Chevalley--Eilenberg complex and their graded Lie bracket, placing deformations and
obstructions into a Maurer--Cartan framework \cite{NijenhuisRichardson1967}.  Analogous deformation
theories exist for commutative and Leibniz structures, via Harrison cohomology \cite{Harrison1962}
and Leibniz cohomology \cite{LodayPirashvili1993}.

A persistent theme is that the vanishing of $H^2(\mu)$, while sufficient for smoothness and for
openness of the orbit in the classical Lie setting \cite{NijenhuisRichardson1967}, is not
necessary: Richardson’s examples show that geometric rigidity may occur even when $H^2(\mu)\neq 0$
\cite{Richardson1967}, so the obstruction to integrating first-order classes is genuinely
nonlinear.  Formal moduli methods explain this nonlinearity by encoding deformations in a
differential graded Lie algebra (or an $L_\infty$-model) and identifying obstructions with the
quadratic and higher terms of the Maurer--Cartan equation \cite{Schlessinger1968,
  GoldmanMillson1988}.  In concrete families one can sometimes compute these obstruction maps
explicitly; the work of Fialowski--Penkava and collaborators provides detailed miniversal
descriptions and obstruction calculi in low-dimensional Lie settings \cite{FialowskiPenkava2008,
  FialowskiPenkavaPhillipson2011, FialowskiPenkava2015}.

From the scheme of structure constants viewpoint, what is comparatively less standard is a direct,
intrinsic criterion forcing Zariski openness of $G\cdot\mu$ from the \emph{quadratic} part of the
obstruction theory, without choosing coordinates or fixing an ambient DGLA model.  The purpose of
this paper is to supply such a criterion for invariant quadratic presentations and to relate it to
explicit Richardson-type phenomena \cite{Richardson1967}.

\subsection*{Structure of the paper}

Section~\ref{sec:inc-var-fundamental} constructs, for any $G$--invariant quadratic presentation,
the fundamental incidence deformation complex and the associated incidence scheme, and records the
exact sequences controlling tangent data.

In Section~\ref{sec:coh-vs-geom} we study the rank stratifications induced by $\delta$ and $\Phi$
on an irreducible component $Z\subseteq X$; on the resulting dense open loci the cohomology sheaves
of $\mathcal C_X^\bullet$ are vector bundles. We relate cohomological and geometric rigidity by
showing that $H^2_{\inc}(\mu)=0$ implies that $G\!\cdot\!\mu$ is Zariski open in $Z_{\red}$, and we
record the converse at smooth points of $Z_{\red}$.

Section~\ref{sec:quadratic-anisotropy} defines the canonical quadratic obstruction
$\kappa^{\inc}_{2,\mu}\colon H^2_{\inc}(\mu)\to H^3_{\inc}(\mu)$ and introduces anisotropy
($\ker\kappa^{\inc}_{2,\mu}=0$). We prove that anisotropy at $\mu\in (Z_{\red})_{\sm}$ forces the
orbit $G\!\cdot\!\mu$ to be Zariski open, and we globalize $\kappa_2$ on constant-rank loci.

Section~\ref{sec:gram-stratification} introduces the $G$--equivariant Gram morphism
$\Gamma\colon A_W\to \Sym^2(W^\vee)$ and its rank stratification, which we use to distinguish
components admitting open orbits.

Finally, Section~\ref{sec:chern-open-orbits} restricts the incidence complex to a homogeneous open
orbit $U_0\simeq G/H$ and derives an equivariant Chern character identity constraining the
representation-theoretic data attached to an open orbit.

\subsection*{Notations and conventions}

Throughout, $\Bbbk$ is an algebraically closed field of characteristic $0$, all $\Bbbk$--vector
spaces are finite-dimensional, and $(\cdot)^\vee$ denotes $\Bbbk$--linear duals. Fix a
$\Bbbk$--vector space $W$ of dimension $m$ and put $G:=\GL(W)$, $\mathfrak g:=\Lie(G)\cong\End(W)$.

Let $\tau\colon W^{\otimes 2}\to W^{\otimes 2}$ be the flip. Since $\mathrm{char}(\Bbbk)=0$,
$W^{\otimes 2}\cong \Sym^2W \oplus \Lambda^2W$, and we set
\[
  W^{2,1}_+:=\Hom_\Bbbk(\Sym^2W,W),\quad
  W^{2,1}_-:=\Hom_\Bbbk(\Lambda^2W,W),\quad
  W^{2,1}:=\Hom_\Bbbk(W^{\otimes 2},W)=W^{2,1}_+\oplus W^{2,1}_-.
\]
In operadic examples we take $A_W:=W^{2,1}$ for associative and Leibniz laws, $A_W:=W^{2,1}_+$ for
commutative laws, and $A_W:=W^{2,1}_-$ for Lie laws, cf.~\cite{Kaygun2025}.

Fix a finite-dimensional $G$--stable subspace $Q\subseteq \Sym^2(A_W^\vee)$ and write
$X:=\Var(Q)\subseteq A_W$. We view $A_W$ as an affine space with underlying vector space $A_W$, so
$T_\mu A_W\simeq A_W$ for all $\mu\in A_W$. For $q\in Q$ we write $f_q(\nu):=q(\nu,\nu)$ for the
associated quadratic polynomial and use the polarized form
\[
  q(\nu_1,\nu_2)=\frac12\bigl(f_q(\nu_1+\nu_2)-f_q(\nu_1)-f_q(\nu_2)\bigr).
\]

For a $G$--scheme $Z$ and a $G$--representation $V$, the notation $Z\times V\to Z$ refers to the
trivial vector bundle with diagonal $G$--linearization.

We use Chow groups and operational Chow rings as in Fulton~\cite{Fulton1998}. For a finite type
$\Bbbk$--scheme $Y$, we write $A_\ast(Y)$ for cycles modulo rational equivalence and $A^\ast(Y)$ for
the operational Chow ring; if $Y$ is smooth of pure dimension $d$, we identify
$A^\ast(Y)\cong A_{d-\ast}(Y)$. Equivariantly, we write $A_G^\ast(Y)$ for the $G$--equivariant
operational Chow ring \cite{EdidinGraham1998}.

For a scheme $Y$ locally of finite type over $\Bbbk$, $Y_{\sm}\subseteq Y$ denotes the smooth locus
\cite[Tags 01V5, 02G1]{stacks-project}. For a coherent sheaf $\mathcal F$ on $Y$ and
$y\in Y(\Bbbk)$, we write $\mathcal F(y):=\mathcal F_y\otimes_{\mathcal O_{Y,y}}\Bbbk$ for the
fiber.

\section{The Fundamental Deformation Complex}\label{sec:inc-var-fundamental}

\subsection{The incidence scheme}\label{subsec:incidence}

Polarization yields the $G$--equivariant bilinear evaluation map
\begin{equation}\label{eq:Theta-evaluation}
  \Theta \colon A_W \times A_W \longrightarrow Q^\vee,
  \qquad
  \Theta(\mu,\nu)(q) := q(\mu,\nu).
\end{equation}
This induces a morphism of trivial vector bundles
$\Phi\colon \mathcal O_{A_W}\otimes A_W \to \mathcal O_{A_W}\otimes Q^\vee$ whose fiber at $\mu$ is
the linear map $\Phi_\mu(\nu)=\Theta(\mu,\nu)$. We also define a section $s_\Theta$ of the trivial
bundle $A_W\times A_W \times Q^\vee \to A_W\times A_W$ by
$s_\Theta(\mu,\nu) := (\mu, \nu, \Theta(\mu,\nu))$.

\begin{lemma}\label{lem:Inc-as-zerolocus}
  The incidence locus $\Inc(Q) \subset A_W \times A_W$ is defined as the scheme-theoretic zero
  locus $Z(s_\Theta)$ of the section $s_\Theta$ \cite[Tag 01M1]{stacks-project}. Its $\Bbbk$-points
  are precisely the pairs $(\mu,\nu)$ such that $q(\mu,\nu)=0$ for all $q\in Q$.
\end{lemma}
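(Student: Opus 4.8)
The plan is to observe that the first sentence is definitional, so the only content to verify is the description of the $\Bbbk$--points of $Z(s_\Theta)$, together with a brief check that the scheme structure is the intended, basis--independent one. First I would recall the canonical scheme structure on the zero locus of a section of a vector bundle: after choosing a basis $q_1,\dots,q_r$ of $Q$, the section $s_\Theta$ has components $(\mu,\nu)\mapsto q_i(\mu,\nu)$, each a bilinear---hence regular---function on $A_W\times A_W$, and $Z(s_\Theta)=\Spec\bigl(\Bbbk[A_W\times A_W]/(q_1,\dots,q_r)\bigr)$. Since the ideal $(q_1,\dots,q_r)$ is the image of the transpose $(A_W\times A_W)\times Q\to \mathcal O_{A_W\times A_W}$ of $s_\Theta$, it is independent of the chosen basis, and the entire construction is $G$--equivariant because $\Theta$ is; cf.~\cite[Tag 01M1]{stacks-project}.

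For the $\Bbbk$--points I would unwind the definition fiberwise. A $\Bbbk$--point $(\mu,\nu)\in(A_W\times A_W)(\Bbbk)$ lies in $Z(s_\Theta)$ precisely when the value $s_\Theta(\mu,\nu)=(\mu,\nu,\Theta(\mu,\nu))$ lies in the zero section, i.e.\ when $\Theta(\mu,\nu)=0$ in $Q^\vee$. An element of $Q^\vee$ vanishes exactly when it annihilates all of $Q$, so this holds iff $\Theta(\mu,\nu)(q)=0$ for every $q\in Q$; by the defining formula~\eqref{eq:Theta-evaluation} this is exactly the condition $q(\mu,\nu)=0$ for all $q\in Q$. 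In the basis above, the same statement reads $q_i(\mu,\nu)=0$ for $1\le i\le r$, matching the vanishing of the generators of the defining ideal.

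I expect no genuine obstacle here: the lemma is a formality. The only step warranting a word of care is the claim that $Z(s_\Theta)$ carries the expected scheme structure independently of the basis of $Q$, which is the standard fact that the components of a section of a vector bundle generate a well--defined ideal sheaf, namely the image of the section's transpose.
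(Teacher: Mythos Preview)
Your proof is correct and follows essentially the same approach as the paper: choose a basis $q_1,\dots,q_r$ of $Q$, identify the section $s_\Theta$ with the tuple of bilinear regular functions $q_i(\mu,\nu)$, and read off both the defining ideal and the $\Bbbk$-points. You add a brief discussion of basis-independence and $G$-equivariance and an alternative fiberwise formulation ($\Theta(\mu,\nu)=0$ in $Q^\vee$), but these are elaborations rather than a different argument.
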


\begin{proof}
  Choose a basis $(q_1,\dots,q_r)$ of $Q$. The section $s_\Theta$ corresponds to the tuple of
  regular functions $(q_1(\mu,\nu),\dots,q_r(\mu,\nu))$. By definition, $Z(s_\Theta)$ is the closed
  subscheme cut out by the ideal generated by these functions. Thus, a point $(\mu,\nu)$ lies in
  $\Inc(Q)$ if and only if it is a common zero of these functions.
\end{proof}

The diagonal embedding $\Delta(\mu):=(\mu,\mu)$ pulls back $\Inc(Q)$ to the variety of algebras.

\begin{proposition}\label{prop:Var-as-pullback}
  The parameter scheme $X=\Var(Q)$ is defined as the scheme-theoretic inverse image of $\Inc(Q)$
  under $\Delta$. Explicitly, it is the closed subscheme of $A_W$ defined by the quadratic forms
  $\{f_q\}_{q\in Q}$.
\end{proposition}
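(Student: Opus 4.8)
The plan is to unwind the definition of the scheme-theoretic inverse image in terms of ideal sheaves and then match generators, using Lemma~\ref{lem:Inc-as-zerolocus} for the ideal of $\Inc(Q)$. Recall that for the closed immersion $\Inc(Q)\hookrightarrow A_W\times A_W$ with ideal sheaf $\mathcal I$ and the diagonal morphism $\Delta\colon A_W\to A_W\times A_W$, the scheme-theoretic preimage $\Delta^{-1}\bigl(\Inc(Q)\bigr)$ is by definition the closed subscheme of $A_W$ whose ideal sheaf is the image of $\Delta^\ast\mathcal I\to\mathcal O_{A_W}$, i.e.\ the ideal generated by the functions $\Delta^\ast g$ as $g$ ranges over local sections of $\mathcal I$. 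Since $A_W$ is affine with coordinate ring $R:=\mathcal O(A_W)$ and $A_W\times A_W$ has coordinate ring $R\otimes_\Bbbk R$, Lemma~\ref{lem:Inc-as-zerolocus} identifies $\mathcal I$ with the ideal of $R\otimes_\Bbbk R$ generated by the bilinear regular functions $(\mu,\nu)\mapsto q_i(\mu,\nu)$ attached to a basis $(q_1,\dots,q_r)$ of $Q$.

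The first concrete step is to compute the pullback along the diagonal. The morphism $\Delta$ corresponds on coordinate rings to the multiplication map $m\colon R\otimes_\Bbbk R\to R$, and by construction $m$ sends the bilinear form $(\mu,\nu)\mapsto q_i(\mu,\nu)$ to the quadratic form $\mu\mapsto q_i(\mu,\mu)=f_{q_i}(\mu)$. Hence the ideal cutting out $\Delta^{-1}\bigl(\Inc(Q)\bigr)$ in $R$ is exactly $(f_{q_1},\dots,f_{q_r})$. The second step is to see that this is basis-independent and coincides with the subscheme defined by all of $\{f_q\}_{q\in Q}$: the assignment $q\mapsto f_q$ is $\Bbbk$-linear, since $f_q(\nu)=q(\nu,\nu)$ is the composition of $q\in\Sym^2(A_W^\vee)$ with the (fixed) diagonal and is therefore linear in $q$; thus $\{f_q:q\in Q\}$ is the $\Bbbk$-span of $f_{q_1},\dots,f_{q_r}$ and generates the same ideal. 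This identifies $\Delta^{-1}\bigl(\Inc(Q)\bigr)$ with the closed subscheme of $A_W$ defined by the quadratic forms $\{f_q\}_{q\in Q}$, which is precisely $X=\Var(Q)$ in the notation of the introduction.

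I do not expect a serious obstacle here: the content is entirely formal. The one point requiring care is that the claim is scheme-theoretic rather than merely set-theoretic, so the argument must track generators of ideal sheaves (and their pullbacks) rather than only $\Bbbk$-points; this is exactly what Lemma~\ref{lem:Inc-as-zerolocus} was set up to provide. If useful for later sections I would also record in passing that $\Delta$ is $G$-equivariant for the diagonal $G$-action on $A_W\times A_W$ and that $\Inc(Q)$ is $G$-stable by the $G$-equivariance of $\Theta$ in~\eqref{eq:Theta-evaluation}, so the description exhibits $X$ as a $G$-stable closed subscheme of $A_W$ — though the present statement does not require this.
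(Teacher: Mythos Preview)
Your argument is correct and is exactly the natural unwinding of definitions: pull back the generators of the ideal of $\Inc(Q)$ from Lemma~\ref{lem:Inc-as-zerolocus} along the diagonal, observe that $q_i(\mu,\mu)=f_{q_i}(\mu)$, and use linearity of $q\mapsto f_q$ to pass from a basis to all of $Q$. The paper itself gives no proof for this proposition, treating it as an immediate consequence of Lemma~\ref{lem:Inc-as-zerolocus} and the definition of scheme-theoretic inverse image; your write-up simply spells out what the paper leaves implicit.
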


% \begin{proof}
%   Pulling back the equations $q(\mu,\nu)$ defining $\Inc(Q)$ along $\nu=\mu$ yields precisely the
%   equations $f_q(\mu)=q(\mu,\mu)$.
% \end{proof}

\subsection{The fundamental exact sequence}\label{subsec:tangent-kernel}

Restrict $\Phi$ to $X$ to obtain a morphism of coherent sheaves
$\Phi_X \colon \mathcal O_X\otimes_\Bbbk A_W \to \mathcal O_X\otimes_\Bbbk Q^\vee$.  Define
$\mathcal S := \ker(\Phi_X)$ and $\mathcal N := \coker(\Phi_X)$.

\begin{lemma}\label{lem:tangent-space-kernel}
  Let $\mu\in X(\Bbbk)$. Under $T_\mu A_W\simeq A_W$, the Zariski tangent space $T_\mu X$ is
  identified with $\ker(\Phi_\mu)\subseteq A_W$.
\end{lemma}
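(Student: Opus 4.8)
The plan is to compute the Zariski tangent space directly from the defining equations of $X$ exhibited in Proposition~\ref{prop:Var-as-pullback}. By that proposition, $X$ is the closed subscheme of $A_W$ cut out by the ideal $I$ generated by the quadratic forms $f_q(\nu)=q(\nu,\nu)$ as $q$ ranges over $Q$; fixing a basis $(q_1,\dots,q_r)$ of $Q$ one has $I=(f_{q_1},\dots,f_{q_r})$. Under the affine identification $T_\mu A_W\simeq A_W$, the Zariski tangent space $T_\mu X$ is then the common kernel $\bigcap_i\ker\bigl(d(f_{q_i})_\mu\bigr)$ of the differentials of the generators at $\mu$.

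Next I would carry out the Jacobian computation. Since $f_q$ is a homogeneous quadratic with associated symmetric bilinear form $q$, differentiation in a direction $v\in A_W$ gives
\[
  d(f_q)_\mu(v)=\frac{d}{dt}\Big|_{t=0}q(\mu+tv,\,\mu+tv)=2\,q(\mu,v),
\]
which also follows from the polarization identity recorded in the conventions. By the definition of the evaluation map~\eqref{eq:Theta-evaluation}, $q(\mu,v)=\Theta(\mu,v)(q)=\Phi_\mu(v)(q)$, hence $d(f_q)_\mu(v)=2\,\Phi_\mu(v)(q)$ for every $q\in Q$. Because $f_q$ depends linearly on $q\in\Sym^2(A_W^\vee)$, the span of $\{d(f_q)_\mu:q\in Q\}$ coincides with the span of $\{d(f_{q_i})_\mu\}_{i=1}^r$, so passing to a basis loses no information.

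Combining these, $v\in T_\mu X$ if and only if $\Phi_\mu(v)(q_i)=0$ for all $i$, i.e. if and only if $\Phi_\mu(v)=0$ in $Q^\vee$; here $\mathrm{char}(\Bbbk)=0$ enters only to discard the factor $2$. Therefore $T_\mu X=\ker(\Phi_\mu)$ as subspaces of $A_W$. I do not expect a serious obstacle here; the only point needing a line of care is the standard fact that the scheme-theoretic tangent space may be computed from a generating set of the ideal (the $f_{q_i}$), together with the linearity of $q\mapsto f_q$ which lets the condition be phrased invariantly as the single equation $\Phi_\mu(v)=0$.
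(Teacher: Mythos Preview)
Your proof is correct and follows essentially the same approach as the paper: both compute the linearization of the defining quadrics $f_q$ at $\mu$ and identify the resulting condition $2q(\mu,\nu)=0$ with $\Phi_\mu(\nu)=0$. The only cosmetic difference is that the paper phrases the tangent space via $\Bbbk[\varepsilon]/(\varepsilon^2)$--points and expands $f_q(\mu+\varepsilon\nu)$ directly, whereas you invoke the Jacobian description $T_\mu X=\bigcap_i\ker d(f_{q_i})_\mu$; these are equivalent formulations of the same computation.
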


\begin{proof}
  A tangent vector at $\mu$ is a $\Bbbk[\varepsilon]/(\varepsilon^2)$--point $\mu+\varepsilon\nu$
  satisfying $f_q(\mu+\varepsilon\nu)=0$ for all $q\in Q$. Expanding the quadratic form yields
  \[
    f_q(\mu+\varepsilon\nu) = f_q(\mu) + 2\varepsilon q(\mu,\nu) + \varepsilon^2 q(\nu,\nu).
  \]
  Since $\mu\in X$, $f_q(\mu)=0$. Modulo $\varepsilon^2$, the condition reduces to
  $2\varepsilon q(\mu,\nu)=0$. Thus $\Phi_\mu(\nu)=0$.
\end{proof}

\begin{proposition}\label{prop:tangent-sheaf-kernel}
  There is a canonical identification of coherent sheaves on $X$,
  $\mathcal T_X \cong \mathcal S=\ker(\Phi_X)$, fitting into the exact sequence:
  \begin{equation}\label{eq:fundamental-SES}
    0 \longrightarrow \mathcal S
    \longrightarrow \mathcal O_X\otimes_\Bbbk A_W
    \xrightarrow{ \Phi_X }
    \mathcal O_X\otimes_\Bbbk Q^\vee
    \longrightarrow \mathcal N
    \longrightarrow 0.
  \end{equation}
\end{proposition}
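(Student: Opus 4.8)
The exact sequence~\eqref{eq:fundamental-SES} is, given the definitions $\mathcal S=\ker(\Phi_X)$ and $\mathcal N=\coker(\Phi_X)$, automatic once we produce a canonical isomorphism $\mathcal T_X\cong\mathcal S$; so the whole content is that isomorphism, and the plan is to extract it from the conormal sequence of the closed immersion $\iota\colon X\hookrightarrow A_W$.

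First I would record the ambient identifications. Since $A_W$ is affine space on the vector space $A_W$, its sheaf of differentials is canonically $\Omega_{A_W/\Bbbk}\cong\mathcal O_{A_W}\otimes_\Bbbk A_W^\vee$ (the constant $1$-forms $dx$, $x\in A_W^\vee$), so $\iota^\ast\Omega_{A_W/\Bbbk}\cong\mathcal O_X\otimes_\Bbbk A_W^\vee$, whose $\mathcal O_X$-linear dual is $\mathcal O_X\otimes_\Bbbk A_W$, the source of $\Phi_X$ and consistent with $T_\mu A_W\simeq A_W$. By Proposition~\ref{prop:Var-as-pullback} the ideal sheaf $\mathcal I\subseteq\mathcal O_{A_W}$ cutting out $X$ is generated by $\{f_q\}_{q\in Q}$, and $q\mapsto f_q$ is $\Bbbk$-linear, so there is a canonical $\mathcal O_X$-linear surjection $\pi\colon\mathcal O_X\otimes_\Bbbk Q\twoheadrightarrow\mathcal I/\mathcal I^2$, $q\mapsto\overline{f_q}$ (choosing a basis $q_1,\dots,q_r$ of $Q$, the images $\overline{f_{q_i}}$ generate $\mathcal I/\mathcal I^2$, which is exactly surjectivity of $\pi$).

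Next I would compute the conormal differential. The composite $d\circ\pi\colon\mathcal O_X\otimes Q\to\mathcal O_X\otimes A_W^\vee$ of $\pi$ with the conormal map $d\colon\mathcal I/\mathcal I^2\to\iota^\ast\Omega_{A_W/\Bbbk}$ sends $q\mapsto df_q$, and the expansion $f_q(\mu+\varepsilon\nu)=f_q(\mu)+2\varepsilon\,q(\mu,\nu)+\varepsilon^2 q(\nu,\nu)$ from the proof of Lemma~\ref{lem:tangent-space-kernel} identifies $df_q$ at $\mu$ with the linear form $\nu\mapsto 2q(\mu,\nu)$; recalling $\Phi_\mu(\nu)=\Theta(\mu,\nu)=\bigl(q\mapsto q(\mu,\nu)\bigr)$ from~\eqref{eq:Theta-evaluation}, the $\mathcal O_X$-linear dual of $d\circ\pi$ is therefore exactly $2\Phi_X$. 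Now apply $\mathcal{H}om_{\mathcal O_X}(\,\cdot\,,\mathcal O_X)$ to the right-exact conormal sequence $\mathcal I/\mathcal I^2\xrightarrow{\,d\,}\iota^\ast\Omega_{A_W/\Bbbk}\to\Omega_{X/\Bbbk}\to 0$: left-exactness gives $0\to\mathcal T_X\to\mathcal O_X\otimes A_W\xrightarrow{\,d^\vee\,}\mathcal{H}om_{\mathcal O_X}(\mathcal I/\mathcal I^2,\mathcal O_X)$, so $\mathcal T_X=\ker(d^\vee)$; since $\pi$ is surjective, $\pi^\vee\colon\mathcal{H}om_{\mathcal O_X}(\mathcal I/\mathcal I^2,\mathcal O_X)\hookrightarrow\mathcal O_X\otimes Q^\vee$ is injective and $\pi^\vee\circ d^\vee=(d\circ\pi)^\vee=2\Phi_X$. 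Hence $\mathcal T_X=\ker(d^\vee)=\ker(2\Phi_X)=\ker(\Phi_X)=\mathcal S$ (the factor $2$ is a unit since $\operatorname{char}\Bbbk=0$), and~\eqref{eq:fundamental-SES} is then the defining four-term sequence of $\ker(\Phi_X)$ and $\coker(\Phi_X)$. The isomorphism is canonical and $G$-equivariant because $\Theta$ is $G$-equivariant, $Q$ and $X$ are $G$-stable, and the conormal sequence is functorial in $\iota$.

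The only genuine subtlety is that $X$ may be non-reduced and $\mathcal I/\mathcal I^2$ need not be locally free, so one cannot invoke smoothness of $X$ or a Koszul/complete-intersection structure on $\{f_q\}$: the argument must stay with the \emph{right}-exact conormal sequence and the \emph{left}-exact functor $\mathcal{H}om_{\mathcal O_X}(\,\cdot\,,\mathcal O_X)$, turning the cokernel presentation of $\Omega_{X/\Bbbk}$ into a kernel. The remaining care is purely bookkeeping — transposing $d\circ\pi$ correctly and tracking the harmless factor $2$ — after which everything is formal.
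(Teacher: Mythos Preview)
Your proof is correct and follows essentially the same approach as the paper: dualize the conormal sequence for $\iota\colon X\hookrightarrow A_W$, use the surjection $\mathcal O_X\otimes Q\twoheadrightarrow\mathcal I/\mathcal I^2$ to embed $\mathcal{H}om(\mathcal I/\mathcal I^2,\mathcal O_X)$ into $\mathcal O_X\otimes Q^\vee$, identify the resulting composite with $2\Phi_X$ via $df_q(\mu)(\nu)=2q(\mu,\nu)$, and absorb the unit $2$. Your write-up is slightly more explicit about why $\ker(d^\vee)=\ker(\pi^\vee\circ d^\vee)$ (injectivity of $\pi^\vee$) and about the $G$-equivariance, but there is no substantive difference.
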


\begin{proof}
  Let $i\colon X\hookrightarrow A_W$ be the closed immersion with ideal sheaf $I$, generated by the
  quadrics $f_q(\mu)=q(\mu,\mu)$ for $q\in Q$. The standard incidence cokernel sequence for $i$
  gives an exact sequence of coherent sheaves on $X$
  \[
    I/I^2 \longrightarrow \Omega_{A_W}\big|_X \longrightarrow \Omega_X \longrightarrow 0,
  \]
  hence, after dualizing,
  \[
    0\longrightarrow \mathcal T_X
    \longrightarrow \mathcal T_{A_W}\big|_X
    \longrightarrow \mathcal Hom(I/I^2,\mathcal O_X).
  \]
  Using $\mathcal T_{A_W}\big|_X\simeq \mathcal O_X\otimes_\Bbbk A_W$, it remains to identify the
  rightmost arrow with $\Phi_X$ up to the harmless scalar~$2$ (invertible in $\Bbbk$).

  The generators $f_q$ induce a surjection $\mathcal O_X\otimes_\Bbbk Q\twoheadrightarrow I/I^2$,
  and hence an injection
  $\mathcal Hom(I/I^2,\mathcal O_X)\hookrightarrow \mathcal O_X\otimes_\Bbbk Q^\vee$. Under this
  injection, the composite map
  $\mathcal T_{A_W}\big|_X\to \mathcal Hom(I/I^2,\mathcal O_X)\hookrightarrow \mathcal
  O_X\otimes_\Bbbk Q^\vee$ is the Jacobian map $\nu\mapsto \bigl(q\mapsto df_q(\nu)\bigr)$. Since
  $df_q(\mu)(\nu)=2q(\mu,\nu)$, this composite is precisely $2\Phi_X$. Therefore
  \[
    \mathcal T_X
    = \ker\Bigl(\mathcal O_X\otimes_\Bbbk A_W \xrightarrow{ 2\Phi_X } \mathcal O_X\otimes_\Bbbk Q^\vee\Bigr)
    \cong \ker(\Phi_X)
     = \mathcal S.
  \]
  Finally, \eqref{eq:fundamental-SES} is the kernel--cokernel exact sequence of $\Phi_X$, with
  $\mathcal N:=\coker(\Phi_X)$.
\end{proof}

\subsection{The incidence deformation complex}\label{subsec:fundamental-complex}

For $\mu\in X(\Bbbk)$ and $\xi\in\mathfrak g=\Lie(G)$, define
$\delta_\mu(\xi) := \xi\cdot\mu \in T_\mu A_W \simeq A_W$. This globalizes to an
$\mathcal O_X$--linear morphism
$\delta\colon \mathcal O_X\otimes_\Bbbk\mathfrak g \to \mathcal O_X\otimes_\Bbbk A_W$.

\begin{lemma}\label{lem:Phi-delta-zero}
  One has $\Phi_X\circ\delta=0$. In particular, $\delta$ factors canonically through
  $\mathcal S:=\ker(\Phi_X)$.
\end{lemma}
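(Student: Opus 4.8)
The plan is to establish the sheaf identity $\Phi_X\circ\delta=0$ by exhibiting the matrix entries of the composite as explicit elements of the ideal of $X$ in $A_W$, and then to read off the asserted factorization through $\mathcal S$ purely formally. Since $X$ is closed in the affine space $A_W$, with ideal $I=(\{f_q\}_{q\in Q})$, and $\delta$ and $\Phi_X$ are the restrictions to $X$ of the $\mathcal O_{A_W}$-linear maps $\mathcal O_{A_W}\otimes_\Bbbk\mathfrak g\to\mathcal O_{A_W}\otimes_\Bbbk A_W$ and $\mathcal O_{A_W}\otimes_\Bbbk A_W\to\mathcal O_{A_W}\otimes_\Bbbk Q^\vee$ (the latter induced by \eqref{eq:Theta-evaluation}), the composite $\Phi\circ\delta$ is $\mathcal O_{A_W}$-linear with matrix entries the regular functions $\mu\mapsto q(\mu,\xi\cdot\mu)$, one for each pair $(q,\xi)\in Q\times\mathfrak g$. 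It therefore suffices to prove that every such function lies in $I$; its image in $\Gamma(X,\mathcal O_X)$ is then zero, forcing $\Phi_X\circ\delta=0$.

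The key step is differentiated $G$-invariance of the evaluation pairing. The assignment $q\mapsto f_q$ is $G$-equivariant for the natural actions on $\Sym^2(A_W^\vee)$ and on $\Bbbk[A_W]$, that is, $f_{g\cdot q}(g\cdot\mu)=f_q(\mu)$ identically in $g$ and $\mu$. Differentiating this relation at $g=e$ in a direction $\xi\in\mathfrak g$ and using $df_q(\mu)(\nu)=2q(\mu,\nu)$ (as in the proof of Lemma~\ref{lem:tangent-space-kernel}) gives the polynomial identity
\[
  2\,q(\mu,\xi\cdot\mu)\;=\;-\,f_{\xi\cdot q}(\mu)\qquad\text{on }A_W,
\]
where $\xi\cdot q\in\Sym^2(A_W^\vee)$ denotes the infinitesimal action on $q$. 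Because $Q$ is $G$-stable it is $\mathfrak g$-stable, so $\xi\cdot q\in Q$ and hence $f_{\xi\cdot q}\in I$; as $\mathrm{char}\,\Bbbk=0$ we conclude $\mu\mapsto q(\mu,\xi\cdot\mu)$ lies in $I$, and therefore $\Phi_X\circ\delta=0$.

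For the final clause, $\Phi_X\circ\delta=0$ means $\im(\delta)\subseteq\ker(\Phi_X)=\mathcal S$, so $\delta$ factors uniquely as $\mathcal O_X\otimes_\Bbbk\mathfrak g\to\mathcal S\hookrightarrow\mathcal O_X\otimes_\Bbbk A_W$; composing with the identification $\mathcal S\cong\mathcal T_X$ of Proposition~\ref{prop:tangent-sheaf-kernel} realizes this factorization as the infinitesimal-action (anchor) map $\mathcal O_X\otimes_\Bbbk\mathfrak g\to\mathcal T_X$.

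I expect the only real point of care to be that the assertion is scheme-theoretic rather than pointwise. At $\Bbbk$-points the vanishing $\Phi_\mu(\xi\cdot\mu)=0$ is immediate, since the $\Bbbk[\varepsilon]/(\varepsilon^2)$-point $(1+\varepsilon\xi)\cdot\mu=\mu+\varepsilon(\xi\cdot\mu)$ of the $G$-stable scheme $X$ exhibits $\xi\cdot\mu\in T_\mu X=\ker(\Phi_\mu)$ via Lemma~\ref{lem:tangent-space-kernel}; but as $X$ need not be reduced one must upgrade this to the polynomial identity above to annihilate the sheaf morphism, and the only bookkeeping there is to match the transport-of-structure action on $A_W$ with the induced action on $\Sym^2(A_W^\vee)$ so that $\xi\cdot q$ is unambiguous. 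The precise nonzero scalar relating $q(\mu,\xi\cdot\mu)$ to $f_{\xi\cdot q}$ plays no role.
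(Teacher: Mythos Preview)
Your proof is correct and is built on the same idea as the paper's---differentiating the $G$--invariance---but you sharpen it in a way worth noting. The paper fixes $q$ and differentiates $g\mapsto f_q(g\cdot\mu)$ at $g=e$ for $\mu\in X(\Bbbk)$, obtaining $\Phi_\mu(\delta_\mu(\xi))=0$ pointwise; this literally only shows the matrix entries of $\Phi_X\circ\delta$ lie in $\sqrt{I}$, which suffices when $X$ is reduced but is weaker than the sheaf statement in general. You instead differentiate the full equivariance $f_{g\cdot q}(g\cdot\mu)=f_q(\mu)$ to get the polynomial identity $2\,q(\mu,\xi\cdot\mu)=-f_{\xi\cdot q}(\mu)$ on all of $A_W$, and then use $\mathfrak g$--stability of $Q$ to place the right-hand side in $I$ itself. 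That upgrade is exactly what is needed for the scheme-theoretic vanishing $\Phi_X\circ\delta=0$ without any reducedness hypothesis, and your closing paragraph correctly isolates why the pointwise argument alone is insufficient. The factorization through $\mathcal S$ is then the trivial consequence you state.
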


\begin{proof}
  Since $Q$ is $G$-stable, the scheme $X$ is $G$-stable. Let $\mu \in X$. Then $f_q(g\cdot\mu)=0$
  for all $g\in G$. Differentiating the map $g \mapsto f_q(g\cdot\mu)$ at the identity yields
  $df_q(\mu)(\xi\cdot\mu)=0$. Since $df_q(\mu)(\nu) = 2q(\mu,\nu)$, this implies
  $\Phi_\mu(\delta_\mu(\xi))=0$.
\end{proof}

\begin{definition}\label{def:incidence-complex}
  The \emph{fundamental incidence deformation complex} on $X$ is the three-term complex of coherent
  $\mathcal O_X$--modules
  \begin{equation}\label{eq:incidence-complex}
    \mathcal C_X^\bullet\colon
    \mathcal C_X^1:=\mathcal O_X\otimes_\Bbbk\mathfrak g
    \xrightarrow{ \delta }
    \mathcal C_X^2:=\mathcal O_X\otimes_\Bbbk A_W
    \xrightarrow{ \Phi_X }
    \mathcal C_X^3:=\mathcal O_X\otimes_\Bbbk Q^\vee,
  \end{equation}
  concentrated in cohomological degrees $1,2,3$. Its cohomology sheaves are
  \[
    \mathcal H_X^1:=\ker(\delta),\qquad
    \mathcal H_X^2:=\ker(\Phi_X)/\im(\delta)=\mathcal S/\im(\delta),\qquad
    \mathcal H_X^3:=\coker(\Phi_X)=\mathcal N.
  \]
\end{definition}

\begin{definition}
  Fix $\mu\in X(\Bbbk)$. Taking the fiber of \eqref{eq:incidence-complex} at $\mu$ yields a complex
  of finite-dimensional $\Bbbk$--vector spaces
  \begin{equation}\label{eq:incidence-fiber-complex}
    \mathcal C_X^\bullet(\mu)\colon
    \mathfrak g \xrightarrow{\delta_\mu} A_W \xrightarrow{\Phi_\mu} Q^\vee, \quad \Phi_\mu\circ\delta_\mu=0,
  \end{equation}
  and we set $H^i_{\inc}(\mu):=H^i\bigl(\mathcal C_X^\bullet(\mu)\bigr)$ for $i=1,2,3$.  Although
  $H^i_{\inc}(\mu)$ is defined from the ambient $X$, geometric rigidity is a componentwise
  condition and depends on the comparison between $T_\mu Z_{\red}$ and $T_\mu X$.
\end{definition}

Note that $H^1_{\inc}(\mu)=\ker(\delta_\mu)=\Lie(G_\mu) \cong \Der(W,\mu)$. There are canonical
comparison maps $\mathcal H_X^i(\mu) \longrightarrow H^i_{\inc}(\mu)$, which are isomorphisms
whenever the ranks of the boundary maps are locally constant near $\mu$.

\begin{remark}\label{rem:operadic-loci}
  In the operadic loci $X_\Type=\Var(Q_\Type)$ for $\Type\in\{\Ass,\Comm,\Leib,\Lie\}$, the middle
  cohomology $H^2_{\inc}(\mu)$ recovers the usual first-order deformation spaces (Hochschild,
  Harrison, Leibniz, Chevalley--Eilenberg) as in \cite{Kaygun2025}, consistent with the spirit of
  Andr\'e--Quillen cohomology \cite{Quillen1968,Andre1974} and operadic tangent complexes
  \cite{HarpazNuitenPrasma2019}.
\end{remark}

\section{Cohomological vs. Geometric Rigidity}\label{sec:coh-vs-geom}

\subsection{Rank loci}\label{subsec:rank-loci}

We begin with the general determinantal picture: rank conditions for maps of finite locally free
modules define Zariski closed (or open) loci. One convenient formalism uses Fitting ideals.

\begin{lemma}\label{lem:determinantal-rank-loci}
  Let $Y$ be a scheme and let $\varphi\colon \mathcal E\to \mathcal F$ be a morphism of finite
  locally free $\mathcal O_Y$--modules of ranks $e$ and $f$. For each integer $r\ge 0$, the locus
  \[
    Y_{\le r}(\varphi) := \{y\in Y \mid \rank(\varphi\otimes\kappa(y))\le r\}
  \]
  is a closed subset of $Y$ (in fact a closed subscheme cut out by the $(r+1)\times(r+1)$ minors in
  any local trivialization). Consequently, $\rank(\varphi\otimes\kappa(y))$ is a lower
  semicontinuous function of $y$.
\end{lemma}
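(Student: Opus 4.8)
The plan is to reduce the statement to a purely local, affine computation and then invoke the standard Fitting-ideal description of determinantal loci. First I would fix an affine open $U=\Spec R\subseteq Y$ over which both $\mathcal E$ and $\mathcal F$ are free, choosing bases to present $\varphi|_U$ by an $f\times e$ matrix $M\in \mathrm{Mat}_{f\times e}(R)$. The key observation is that for a point $y\in U$ with residue field $\kappa(y)$, the fiber $\varphi\otimes\kappa(y)$ is represented by the image matrix $\bar M$ over $\kappa(y)$, and $\rank(\bar M)\le r$ if and only if every $(r+1)\times(r+1)$ minor of $\bar M$ vanishes at $y$, i.e. every such minor of $M$ lies in the maximal ideal $\mathfrak m_y$. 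Thus $Y_{\le r}(\varphi)\cap U = V(I_{r+1}(M))$, where $I_{r+1}(M)\subseteq R$ is the ideal generated by all $(r+1)$-minors of $M$; equivalently $I_{r+1}(M)$ is the $(e-r)$-th Fitting ideal of $\coker(\varphi|_U)$, which is intrinsic and independent of the chosen trivialization.

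Next I would check that these locally defined closed subschemes glue. On an overlap $U\cap U'$ the two presenting matrices differ by multiplication on left and right by invertible matrices (the transition matrices of $\mathcal F$ and $\mathcal E$), and the ideal generated by $(r+1)$-minors is invariant under such multiplication — this is the Cauchy--Binet identity, or more conceptually the fact that Fitting ideals of a module are well-defined. Hence the ideals $I_{r+1}(M)$ agree on overlaps and define a global closed subscheme $Y_{\le r}(\varphi)\subseteq Y$ whose underlying set is exactly the stated rank locus.

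Finally, lower semicontinuity of $y\mapsto \rank(\varphi\otimes\kappa(y))$ is a formal consequence: the function takes values in $\{0,1,\dots,\min(e,f)\}$, and for each $r$ the set where it is $\le r$ is closed, so the set where it is $\ge r+1$ is open; thus the rank can only jump up under specialization, which is the definition of lower semicontinuity. I would also remark that the complementary loci $Y_{\ge r}(\varphi):=Y\setminus Y_{\le r-1}(\varphi)$ are open.

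I do not expect a genuine obstacle here — the statement is the standard determinantal/Fitting-ideal lemma. The only point requiring a modicum of care is the gluing step, i.e. making sure the local minor ideals are independent of trivialization; invoking the Fitting ideal formalism (or a one-line Cauchy--Binet argument) disposes of it cleanly.
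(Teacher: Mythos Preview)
Your proposal is correct and follows essentially the same argument as the paper: localize to an affine open trivializing $\mathcal E$ and $\mathcal F$, identify the rank-$\le r$ locus with the vanishing of the $(r+1)$-minors (equivalently a Fitting ideal of $\coker\varphi$), and conclude closedness and lower semicontinuity. You are slightly more explicit than the paper about why the local ideals glue (invoking Cauchy--Binet / intrinsicness of Fitting ideals), whereas the paper simply asserts that the closed conditions glue and cites the Stacks Project tag on Fitting ideals.
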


\begin{proof}
  The statement is Zariski local on $Y$. On an affine open $V=\Spec R$ over which
  $\mathcal E|_V\simeq \mathcal O_V^{\oplus e}$ and $\mathcal F|_V\simeq \mathcal O_V^{\oplus f}$,
  the map $\varphi|_V$ is given by an $f\times e$ matrix $A$ with entries in $R$. For a prime
  $\mathfrak p\subset R$, the rank of $A\otimes\kappa(\mathfrak p)$ is at most $r$ if and only if
  all $(r+1)\times(r+1)$ minors vanish in $\kappa(\mathfrak p)$, i.e. if and only if $\mathfrak p$
  contains the ideal generated by those minors. This ideal can be expressed as a suitable Fitting
  ideal of $\coker(\varphi|_V)$; see Stacks Project, Section~15.8 on Fitting ideals \cite[Tag
  07Z6]{stacks-project}. Hence $Y_{\le r}(\varphi)$ is closed on $V$, and these closed conditions
  glue.
\end{proof}

\begin{corollary}\label{cor:generic-rank}
  Let $Y$ be irreducible and let $\varphi\colon \mathcal E\to\mathcal F$ be as in
  Lemma~\ref{lem:determinantal-rank-loci}. Then there exists a dense open subset $U\subseteq Y$ on
  which $\rank(\varphi\otimes\kappa(y))$ is constant, equal to its maximum value on $Y$.
\end{corollary}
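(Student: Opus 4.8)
The plan is to derive the corollary formally from the lower semicontinuity established in Lemma~\ref{lem:determinantal-rank-loci} together with the irreducibility of $Y$. First I would observe that the function $r(y):=\rank(\varphi\otimes\kappa(y))$ takes values in the finite set $\{0,1,\dots,\min(e,f)\}$, hence attains a maximum value $\rho$ on $Y$, say at a point $y_0\in Y$. If $\rho=0$ the claim is trivial with $U=Y$, so I may assume $\rho\ge 1$.

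Next I would invoke Lemma~\ref{lem:determinantal-rank-loci} with $r=\rho-1$: the locus $Y_{\le\rho-1}(\varphi)$ is closed in $Y$. Since $r(y_0)=\rho>\rho-1$, the point $y_0$ does not lie in $Y_{\le\rho-1}(\varphi)$, so this closed set is a \emph{proper} subset of $Y$; hence its complement $U:=Y\setminus Y_{\le\rho-1}(\varphi)$ is a nonempty open subset. On $U$ one has $r(y)\ge\rho$ by construction of $U$, and $r(y)\le\rho$ because $\rho$ is the global maximum, so $r$ is identically equal to $\rho$ on $U$. Finally, since $Y$ is irreducible, every nonempty open subset is dense; thus $U$ is the required dense open locus on which the rank is constant and equal to its maximum value.

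The argument is essentially immediate once Lemma~\ref{lem:determinantal-rank-loci} is in hand; the only points requiring any care are the trivial edge case $\rho=0$ and the remark that the maximum is attained — automatic, since $r$ is integer-valued and bounded above by $\min(e,f)$. I do not anticipate any genuine obstacle: the corollary simply repackages the determinantal-stratification picture into the "generic constant rank on an irreducible component" form that will be used repeatedly in the sequel.
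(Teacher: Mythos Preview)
Your proof is correct and follows essentially the same argument as the paper: define the maximum rank, apply Lemma~\ref{lem:determinantal-rank-loci} at $r=\rho-1$ to get a proper closed locus, and take its complement as the dense open $U$. Your version is slightly more explicit about why the maximum is attained and about the edge case $\rho=0$, but the structure is identical.
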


\begin{proof}
  Let $r_{\max}$ be the maximum of $\rank(\varphi\otimes\kappa(y))$ on $Y$. Then
  $Y_{\le r_{\max}-1}(\varphi)$ is closed by Lemma~\ref{lem:determinantal-rank-loci} and is a
  proper subset because $r_{\max}$ occurs somewhere. Its complement
  $U:=Y\setminus Y_{\le r_{\max}-1}(\varphi)$ is a dense open set on which the rank is
  $\ge r_{\max}$, hence equal to $r_{\max}$.
\end{proof}

We apply this twice, to the restrictions of $\Phi_X$ and $\delta$ to the reduced smooth locus of a
fixed component.

\subsection{Two rank stratifications}\label{subsec:two-rank-strata}

Fix an irreducible component $Z\subseteq X$, write $Z_{\red}$ for its reduced subscheme,
and set $U:=(Z_{\red})_{\sm}$. When we restrict the fundamental deformation complex
to $U$ we get a complex of vector bundles:
\[  \mathcal C_U^\bullet\colon
    \mathcal O_U\otimes_\Bbbk\mathfrak g
    \xrightarrow{ \delta_U }
    \mathcal O_U\otimes_\Bbbk A_W
    \xrightarrow{ \Phi_U }
    \mathcal O_U\otimes_\Bbbk Q^\vee.
\]
For integers $r,d\ge 0$ define the rank strata
\[
  U_{\Phi,=r}:=\{\mu\in U\mid \rank(\Phi_\mu)=r\},
  \qquad
  U_{\delta,=d}:=\{\mu\in U\mid \rank(\delta_\mu)=d\}.
\]
By Lemma~\ref{lem:determinantal-rank-loci}, the loci $\{\rank(\Phi_\mu)\le r\}$ and
$\{\rank(\delta_\mu)\le d\}$ are closed, hence each equality stratum is locally closed.

\begin{definition}\label{def:generic-ranks}
  Let $r_Z$ be the generic rank of $\Phi$ on $U$ and let $d_Z$ be the generic rank of $\delta$ on
  $U$, i.e. the constant values on dense opens provided by Corollary~\ref{cor:generic-rank}.
  Define the \emph{principal constant-rank locus}
  \[
    U^\circ := U_{\Phi,=r_Z}\cap U_{\delta,=d_Z} \subseteq U.
  \]
  By construction, $U^\circ$ is dense open in $U$.
\end{definition}

Two numerical consequences of these rank functions will be used repeatedly. First, since
$H^1_{\inc}(\mu)=\ker(\delta_\mu)$, rank--nullity gives
\begin{equation}\label{eq:H1-rankdelta}
  \dim_\Bbbk H^1_{\inc}(\mu)=\dim_\Bbbk(\mathfrak g)-\rank(\delta_\mu).
\end{equation}
Second, since $H^2_{\inc}(\mu)=\ker(\Phi_\mu)/\im(\delta_\mu)$, one has
\begin{equation}\label{eq:H2-ranks}
  \dim_\Bbbk H^2_{\inc}(\mu)=\dim_\Bbbk\ker(\Phi_\mu)-\rank(\delta_\mu).
\end{equation}
In particular, on $U^\circ$ the dimensions of $H^1_{\inc}(\mu)$ and
$H^2_{\inc}(\mu)$ are locally constant. Then Euler characteristic of the fiber of the
incidence complex \eqref{eq:incidence-complex} gives the numerical identity
\begin{equation}\label{eq:euler-identity}
  \dim_\Bbbk H^1_{\inc}(\mu)
  -\dim_\Bbbk H^2_{\inc}(\mu)
  +\dim_\Bbbk H^3_{\inc}(\mu)
  =
  \dim_\Bbbk(\mathfrak g)-\dim_\Bbbk(A_W)+\dim_\Bbbk(Q^\vee).
\end{equation}
Note that the left-hand side is a pointwise invariant of the fiber complex, whereas the right-hand
side is a uniform constant determined only by the ambient representation spaces.

\subsection{The constant-rank locus}\label{subsec:bundle-package}

On $U^\circ$ the morphisms $\Phi_U$ and $\delta_U$ have constant rank. This forces all the
algebraic objects appearing in the fundamental complex to be vector bundles, and the fiberwise
cohomology to be computed by the fibers of the cohomology sheaves.

\begin{lemma}\label{lem:kernel-cokernel-bundles}
  Let $Y$ be a scheme and let $\varphi\colon \mathcal E\to\mathcal F$ be a morphism of finite
  locally free $\mathcal O_Y$--modules. If $\rank(\varphi\otimes\kappa(y))$ is constant on an open
  subset $V\subseteq Y$, then $\ker(\varphi)|_V$ and $\coker(\varphi)|_V$ are finite locally free
  on $V$.
\end{lemma}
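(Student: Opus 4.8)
The plan is to reduce the statement to a purely local question on an affine open where both $\mathcal{E}$ and $\mathcal{F}$ are free, and then exhibit $\ker(\varphi)$ and $\coker(\varphi)$ as the sheaves associated to finitely presented modules whose fibers have locally constant dimension. Since the conclusion is Zariski-local on $V$, I would first shrink $V$ to an affine open $\Spec R \subseteq V$ over which $\mathcal{E}|_V \cong \mathcal{O}_V^{\oplus e}$ and $\mathcal{F}|_V \cong \mathcal{O}_V^{\oplus f}$, so that $\varphi$ is given by an $f \times e$ matrix $A$ with entries in $R$, and $\rank(A \otimes \kappa(\mathfrak{p})) = r$ for all $\mathfrak{p} \in \Spec R$ by hypothesis.

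The key step is to handle the cokernel first, since $M := \coker(\varphi|_V)$ is a finitely presented $R$-module with presentation $R^{\oplus e} \xrightarrow{A} R^{\oplus f} \to M \to 0$, and its fiber dimension is $\dim_{\kappa(\mathfrak{p})} M \otimes_R \kappa(\mathfrak{p}) = f - \rank(A \otimes \kappa(\mathfrak{p})) = f - r$, which is constant. A finitely presented module with locally constant fiber dimension is locally free: this is the standard criterion, e.g.\ via Nakayama to produce local generators and then a comparison of Fitting ideals, or directly \cite[Tag 00NX]{stacks-project}. Concretely, the $r$-th Fitting ideal $\mathrm{Fitt}_r(M)$ is the unit ideal (it is not contained in any prime, since $\rank \le r$ everywhere forces the $(r+1)$-minors into every prime but the $r$-minors cannot all vanish at any point), while $\mathrm{Fitt}_{r-1}(M) = 0$ after possibly passing to the reduced structure is not quite what we need; the cleanest route is to invoke that a finitely presented module over a ring whose fibers all have the same dimension is projective, hence locally free of that rank. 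This gives $\coker(\varphi)|_V$ locally free of rank $f - r$.

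For the kernel, I would exploit the short exact sequence $0 \to \ker(\varphi)|_V \to \mathcal{E}|_V \xrightarrow{\varphi} \im(\varphi)|_V \to 0$ together with $0 \to \im(\varphi)|_V \to \mathcal{F}|_V \to \coker(\varphi)|_V \to 0$. Having just shown $\coker(\varphi)|_V$ is locally free, the second sequence splits locally, so $\im(\varphi)|_V$ is locally free (a local direct summand of $\mathcal{F}|_V$) of rank $r$. Then the first sequence is a surjection of coherent sheaves onto a locally free sheaf, hence splits locally, so $\ker(\varphi)|_V$ is a local direct summand of $\mathcal{E}|_V$, therefore finite locally free of rank $e - r$. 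This completes the argument.

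The main obstacle — really the only nontrivial input — is the passage from ``finitely presented with constant fiber dimension'' to ``locally free.'' Everything else is formal manipulation of short exact sequences and local splitting of surjections onto projectives. I would cite the standard commutative-algebra fact \cite[Tag 00NX]{stacks-project} (a finitely presented module whose function $\mathfrak{p} \mapsto \dim_{\kappa(\mathfrak{p})} M_{\mathfrak{p}}/\mathfrak{p}M_{\mathfrak{p}}$ is locally constant is finite locally free) rather than reprove it, since the excerpt already freely invokes Fitting-ideal technology from the Stacks Project in Lemma~\ref{lem:determinantal-rank-loci}.
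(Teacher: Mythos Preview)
Your proof is correct, but it follows a different route from the paper's. The paper argues by explicit Gaussian elimination: on an affine open where the rank is constantly $r$, every point admits a nonvanishing $r\times r$ minor, so one covers by standard opens $D(\Delta)$ on which a fixed minor $\Delta$ is invertible; inverting $\Delta$ and performing row/column operations puts the matrix in block form $\begin{psmallmatrix}I_r&0\\0&0\end{psmallmatrix}$, whence $\ker$ and $\coker$ are visibly free of the expected ranks. Your approach instead invokes the abstract criterion (finitely presented with locally constant fiber dimension $\Rightarrow$ locally free) to handle $\coker(\varphi)$ directly, then peels off $\im(\varphi)$ and $\ker(\varphi)$ by splitting the two short exact sequences against projectives. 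Your argument is cleaner and more conceptual, at the cost of importing a black-box lemma; the paper's is more self-contained and makes the local trivializations explicit, which is closer in spirit to the Fitting-ideal language already used in Lemma~\ref{lem:determinantal-rank-loci}. One cosmetic point: the Fitting-ideal digression in your middle paragraph (where you note that $\mathrm{Fitt}_{r-1}(M)=0$ would require reducedness and then abandon the approach) is a dead end and could simply be deleted --- the citation to \cite[Tag 00NX]{stacks-project} already does all the work.
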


\begin{proof}
  The claim is local on $V$. Choose an affine open $\Spec R\subseteq V$ trivializing
  $\mathcal E,\mathcal F$, so that $\varphi$ is represented by a matrix $A\in M_{f\times e}(R)$.
  If the rank is constantly $r$ on $\Spec R$, then for every prime $\mathfrak p\subset R$ there is
  an $r\times r$ minor of $A$ not vanishing in $\kappa(\mathfrak p)$. Hence $\Spec R$ is covered by
  standard opens $D(\Delta)$ where some fixed $r\times r$ minor $\Delta$ becomes invertible. On
  $R_\Delta$, elementary row/column operations put $A$ into a block form with an $r\times r$
  identity block. In that form, $\ker(\varphi)$ and $\coker(\varphi)$ are visibly free of ranks
  $e-r$ and $f-r$, respectively. These local trivializations glue because they are defined on a
  cover by standard opens. A closely related statement for finite projective modules is proven in
  Stacks Project, Section~10.79 \cite[Lemma 10.79.4, Tag 05GD]{stacks-project}.
\end{proof}

\begin{proposition}\label{prop:bundle-package}
  On $U^\circ$ the sheaves $\mathcal S:=\ker(\Phi_X)$ and $\mathcal N:=\coker(\Phi_X)$ restrict to
  vector bundles, and the maps
  \[
    \delta|_{U^\circ}\colon \mathcal O_{U^\circ}\otimes_\Bbbk\mathfrak g \longrightarrow \mathcal S|_{U^\circ},
    \qquad
    \Phi|_{U^\circ}\colon \mathcal O_{U^\circ}\otimes_\Bbbk A_W \longrightarrow \mathcal O_{U^\circ}\otimes_\Bbbk Q^\vee
  \]
  have constant rank. In particular, the cohomology sheaves $\mathcal H_X^1=\ker(\delta)$ and
  $\mathcal H_X^2=\ker(\Phi_X)/\im(\delta)$ restrict to vector bundles on $U^\circ$.
\end{proposition}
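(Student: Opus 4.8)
The plan is to deduce everything from Lemma~\ref{lem:kernel-cokernel-bundles}, applied first to the middle map $\Phi_X$ and then to the map it induces on the kernel bundle. First I would restrict $\Phi_X$ to $U^\circ$: this is a morphism of trivial, hence finite locally free, $\mathcal O_{U^\circ}$-modules $\mathcal O_{U^\circ}\otimes_\Bbbk A_W\to\mathcal O_{U^\circ}\otimes_\Bbbk Q^\vee$ whose fiber rank is constantly $r_Z$ by the definition of $U_{\Phi,=r_Z}$ in Definition~\ref{def:generic-ranks} (this already records the constant-rank claim for $\Phi|_{U^\circ}$). Lemma~\ref{lem:kernel-cokernel-bundles} then yields that $\ker(\Phi_X|_{U^\circ})$ and $\coker(\Phi_X|_{U^\circ})$ are finite locally free on $U^\circ$, of ranks $\dim_\Bbbk A_W-r_Z$ and $\dim_\Bbbk Q^\vee-r_Z$; moreover the block normal form in the proof of that lemma exhibits $\ker(\Phi_X|_{U^\circ})$ as a local direct summand of $\mathcal O_{U^\circ}\otimes_\Bbbk A_W$, so it is a genuine subbundle and coincides with $\mathcal S|_{U^\circ}$, and likewise $\coker(\Phi_X|_{U^\circ})=\mathcal N|_{U^\circ}$.

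Next I would handle $\delta$ via its factorization through $\mathcal S$. By Lemma~\ref{lem:Phi-delta-zero}, $\delta$ factors canonically as $\mathcal O_X\otimes_\Bbbk\mathfrak g\xrightarrow{\overline\delta}\mathcal S\hookrightarrow\mathcal O_X\otimes_\Bbbk A_W$; on $U^\circ$ the second arrow is the inclusion of a subbundle, hence fiberwise injective, so $\rank(\overline\delta_\mu)=\rank(\delta_\mu)=d_Z$ for every $\mu\in U^\circ$. Thus $\overline\delta|_{U^\circ}\colon\mathcal O_{U^\circ}\otimes_\Bbbk\mathfrak g\to\mathcal S|_{U^\circ}$ — which is what $\delta|_{U^\circ}$ abbreviates in the statement — is a morphism of finite locally free modules (source trivial, target locally free by the previous step) of constant rank $d_Z$; this is the remaining constant-rank assertion. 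Applying Lemma~\ref{lem:kernel-cokernel-bundles} once more to $\overline\delta|_{U^\circ}$, its kernel $\ker(\delta|_{U^\circ})=\mathcal H_X^1|_{U^\circ}$ and its cokernel are finite locally free, and $\im(\overline\delta|_{U^\circ})$ is a subbundle of $\mathcal S|_{U^\circ}$ (the kernel of the surjection onto the locally free cokernel). Hence $\mathcal H_X^2|_{U^\circ}=\mathcal S|_{U^\circ}/\im(\overline\delta|_{U^\circ})$, being the quotient of a vector bundle by a subbundle, is a vector bundle, as is $\mathcal H_X^3|_{U^\circ}=\mathcal N|_{U^\circ}$ from the first step.

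The only point needing care — and where I expect the bookkeeping to be most delicate — is that $\mathcal S$, $\mathcal N$, $\ker\delta$ and $\im\delta$ are all defined on $X$, whereas the proposition asserts they are bundles after restriction along the \emph{locally closed} immersion $U^\circ\hookrightarrow X$, which is not flat, so a priori $\ker(\Phi_X)|_{U^\circ}$ need not equal $\ker(\Phi_X|_{U^\circ})$. This is harmless precisely because constant fiber rank on $U^\circ$ lets Lemma~\ref{lem:kernel-cokernel-bundles} normalize each morphism to block form over a standard-open cover, and in that form kernel, image and cokernel are free with formation compatible with arbitrary base change; equivalently, one may invoke the standard fact that if $\coker\varphi$ is locally free then so are $\ker\varphi$ and $\im\varphi$, compatibly with base change. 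Beyond this identification, no new difficulty arises: the proposition is exactly the double application of Lemma~\ref{lem:kernel-cokernel-bundles} together with the factorization of Lemma~\ref{lem:Phi-delta-zero}.
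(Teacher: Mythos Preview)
Your proposal is correct and follows essentially the same route as the paper: apply Lemma~\ref{lem:kernel-cokernel-bundles} first to $\Phi$ on the constant-rank locus to make $\mathcal S$ and $\mathcal N$ locally free, then invoke the factorization of Lemma~\ref{lem:Phi-delta-zero} and apply Lemma~\ref{lem:kernel-cokernel-bundles} a second time to $\delta$ viewed as a map into the bundle $\mathcal S|_{U^\circ}$. Your extra paragraph on the base-change subtlety (that $\ker(\Phi_X)|_{U^\circ}$ agrees with $\ker(\Phi_X|_{U^\circ})$ because the block normal form makes kernel/image/cokernel formation commute with arbitrary base change on the constant-rank locus) is a welcome clarification that the paper leaves implicit.
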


\begin{proof}
  Apply Lemma~\ref{lem:kernel-cokernel-bundles} to $\Phi_U$ on $U_{\Phi,=r_Z}$ to see that
  $\mathcal S|_{U_{\Phi,=r_Z}}$ and $\mathcal N|_{U_{\Phi,=r_Z}}$ are vector bundles. Restricting
  further to $U^\circ$ preserves local freeness. On $U^\circ$ the map $\delta$ lands in
  $\mathcal S$ by Lemma~\ref{lem:Phi-delta-zero}, and has constant rank by definition of $U^\circ$.
  Another application of Lemma~\ref{lem:kernel-cokernel-bundles} to $\delta|_{U^\circ}$ (viewed as
  a morphism between vector bundles) yields that $\ker(\delta)|_{U^\circ}$ and
  $\coker(\delta|_{U^\circ})$ are vector bundles; the quotient
  $\mathcal H_X^2|_{U^\circ}=\mathcal S|_{U^\circ}/\im(\delta|_{U^\circ})$ is therefore locally
  free as well.
\end{proof}

\subsection{Rigidity implications}\label{subsec:rigidity-implications}

We now state the comparison between cohomological and geometric rigidity in a form that makes the
role of the two rank stratifications explicit.

\begin{definition}\label{def:two-rigidities}
  Let $\mu\in U(\Bbbk)$.  We call the point $\mu$ as \emph{cohomologically rigid} if
  $H^2_{\inc}(\mu)=0$, and as \emph{geometrically rigid in $Z_{\red}$} if
  $G\cdot\mu$ is Zariski open in $Z_{\red}$ (equivalently, if
  $\dim(G\cdot\mu)=\dim(Z_{\red})$ and $\mu$ is a smooth point of $Z_{\red}$).
\end{definition}

We use the following standard fact about locally closed subsets.

\begin{lemma}\label{lem:locally-closed-same-dim-open}
  Let $Y$ be an irreducible variety over $\Bbbk$ and let $W\subseteq Y$ be a locally closed
  subset. If $\dim W=\dim Y$, then $W$ is Zariski open in $Y$.
\end{lemma}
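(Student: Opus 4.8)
The plan is to reduce the statement to a single dimension inequality for closed subsets of an irreducible variety. First I would unwind the definition of \emph{locally closed}: writing $W = U\cap C$ with $U\subseteq Y$ open and $C\subseteq Y$ closed, one has $\overline W\subseteq C$ and hence $W = U\cap \overline W$, so $W$ is open in its own closure $\overline W$. Consequently it suffices to prove $\overline W = Y$, since then $W$ is open in $Y$ outright. (If $W=\emptyset$ the hypothesis forces $Y=\emptyset$ and the claim is trivial, so I may assume $W\neq\emptyset$.)

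Next I would run the dimension count. From $W\subseteq\overline W\subseteq Y$ we get $\dim W\le \dim\overline W\le\dim Y$, and the hypothesis $\dim W=\dim Y$ forces $\dim\overline W=\dim Y$. Now $\overline W$ is a closed subset of the irreducible variety $Y$, and $Y$ has finite dimension $n:=\dim Y$ because it is of finite type over $\Bbbk$. If $\overline W$ were a proper subset of $Y$, then picking a chain $C_0\subsetneq C_1\subsetneq\cdots\subsetneq C_d=\overline W$ of irreducible closed subsets with $d=\dim\overline W$ and appending $Y$ itself on top yields a chain of length $d+1$ in $Y$, whence $\dim Y\ge\dim\overline W+1$, contradicting $\dim\overline W=\dim Y$. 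Therefore $\overline W=Y$, and combined with the previous paragraph, $W$ is Zariski open in $Y$.

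The argument is elementary and there is no real obstacle; the only place a genuine hypothesis is used is the implication ``$\overline W$ a proper closed subset of $Y$ $\Rightarrow$ $\dim\overline W<\dim Y$'', which relies on $Y$ being an irreducible variety of finite dimension so that dimension strictly drops along proper closed inclusions (equivalently, the unique generic point of $Y$ avoids every proper closed subset). Since only topological notions enter, reducedness of $Y$ plays no role, and one could alternatively quote the finiteness/irreducibility facts from a standard reference rather than reproving the chain estimate.
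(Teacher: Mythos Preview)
Your argument is correct and follows the same route as the paper: show $W$ is open in $\overline W$, then use irreducibility of $Y$ and the dimension hypothesis to force $\overline W=Y$. The only imprecision is writing $C_d=\overline W$ in the chain argument, since $\overline W$ need not be irreducible; take $C_d$ to be an irreducible component of $\overline W$ of maximal dimension and the chain extension to $Y$ goes through unchanged.
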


\begin{proof}
  Let $\overline{W}$ be the Zariski closure of $W$ in $Y$. Since $W$ is dense in $\overline{W}$ and
  $\dim W=\dim Y$, we have $\dim\overline{W}=\dim Y$. As $Y$ is irreducible, this forces
  $\overline{W}=Y$. Because $W$ is locally closed, it is open in $\overline{W}=Y$, hence open in
  $Y$.
\end{proof}

\begin{theorem}\label{thm:coh-rigid-implies-open-orbit}
  Let $\mu\in U^\circ(\Bbbk)$. If $H^2_{\inc}(\mu)=0$, then $G\cdot\mu$ is Zariski open in
  $Z_{\red}$.
\end{theorem}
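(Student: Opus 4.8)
The plan is to show that the hypothesis $H^2_{\inc}(\mu)=0$ forces $T_\mu X = T_\mu(G\cdot\mu)$, so that the orbit is already everything the tangent space of $X$ sees at $\mu$; then a dimension count combined with Lemma~\ref{lem:locally-closed-same-dim-open} finishes the argument. Concretely, by Lemma~\ref{lem:tangent-space-kernel} we have $T_\mu X = \ker(\Phi_\mu)$, while the orbit map $G\to G\cdot\mu$ has differential $\delta_\mu$, so $T_\mu(G\cdot\mu) = \im(\delta_\mu)$. The vanishing $H^2_{\inc}(\mu) = \ker(\Phi_\mu)/\im(\delta_\mu) = 0$ therefore says exactly that $T_\mu(G\cdot\mu) = T_\mu X$.

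Next I would convert this tangent equality into a dimension statement. Because $\Bbbk$ has characteristic $0$, the orbit $G\cdot\mu$ is a smooth locally closed subvariety of $A_W$ with $\dim(G\cdot\mu) = \dim_\Bbbk \im(\delta_\mu) = \dim_\Bbbk T_\mu(G\cdot\mu)$. Combining with the previous paragraph,
\[
  \dim(G\cdot\mu) = \dim_\Bbbk T_\mu X = \dim_\Bbbk \ker(\Phi_\mu).
\]
On the other hand, since $\mu \in U^\circ \subseteq (Z_{\red})_{\sm}$, the point $\mu$ is a smooth point of the reduced component $Z_{\red}$, so $\dim_\Bbbk T_\mu Z_{\red} = \dim Z_{\red}$. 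The inclusions $G\cdot\mu \subseteq Z_{\red} \subseteq X$ give $T_\mu(G\cdot\mu) \subseteq T_\mu Z_{\red} \subseteq T_\mu X$, and since the two ends coincide we get $T_\mu Z_{\red} = T_\mu X$ as well, hence $\dim Z_{\red} = \dim(G\cdot\mu)$. Thus the locally closed subset $G\cdot\mu$ of the irreducible variety $Z_{\red}$ has full dimension, and Lemma~\ref{lem:locally-closed-same-dim-open} (applied with $Y = Z_{\red}$, $W = G\cdot\mu$) yields that $G\cdot\mu$ is Zariski open in $Z_{\red}$.

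The one place requiring care — the main obstacle — is the compatibility of the various tangent spaces: a priori $T_\mu X$ is computed in the possibly non-reduced scheme $X$, and we must know that the inclusion $T_\mu Z_{\red}\hookrightarrow T_\mu X$ together with smoothness of $Z_{\red}$ at $\mu$ really pins down $\dim Z_{\red}$ via the squeeze $\im(\delta_\mu)\subseteq T_\mu Z_{\red}\subseteq \ker(\Phi_\mu)$. This is where the hypothesis $\mu\in U^\circ$ (hence $\mu\in(Z_{\red})_{\sm}$) is essential: without smoothness of $Z_{\red}$ at $\mu$ one only gets $\dim Z_{\red}\le \dim_\Bbbk T_\mu Z_{\red}$, which would not close the argument. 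One should also note that the constant-rank hypothesis $\mu\in U^\circ$ guarantees, via Proposition~\ref{prop:bundle-package}, that $H^2_{\inc}(\mu)$ genuinely equals the fiber $\mathcal H^2_X(\mu)$, so that the cohomological vanishing is the honest tangent-theoretic statement used above; but this is a routine bookkeeping point rather than a genuine difficulty.
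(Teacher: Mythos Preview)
Your proof is correct and follows essentially the same route as the paper: identify $T_\mu(G\cdot\mu)=\im(\delta_\mu)$ and $T_\mu X=\ker(\Phi_\mu)$, squeeze $T_\mu Z_{\red}$ between them, use smoothness of $Z_{\red}$ at $\mu$ to pass to dimensions, and invoke Lemma~\ref{lem:locally-closed-same-dim-open}. One small remark: your final paragraph's concern about Proposition~\ref{prop:bundle-package} is unnecessary, since $H^2_{\inc}(\mu)$ is \emph{defined} as the cohomology of the fiber complex~\eqref{eq:incidence-fiber-complex}, i.e.\ $H^2_{\inc}(\mu)=\ker(\Phi_\mu)/\im(\delta_\mu)$ by definition, independently of any constant-rank hypothesis; the comparison with the fiber $\mathcal H_X^2(\mu)$ of the cohomology sheaf is not used in the argument.
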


\begin{proof}
  Since $Z_{\red}\subseteq X$ and $\mu\in Z_{\red}$, we have the inclusions of
  Zariski tangent spaces
  \[
    T_\mu(G\cdot\mu)\subseteq T_\mu Z_{\red} \subseteq T_\mu X.
  \]
  Under $T_\mu A_W\simeq A_W$, Lemma~\ref{lem:tangent-space-kernel} identifies $T_\mu X$ with
  $\ker(\Phi_\mu)$. On the other hand, by definition of $\delta_\mu$ as the differential of the
  orbit map, one has $T_\mu(G\cdot\mu)=\im(\delta_\mu)$; this is the usual identification of the
  tangent space to the orbit with the image of the infinitesimal action map (e.g. for a smooth
  algebraic group acting on a variety). Finally, on $U^\circ$ the condition $H^2_{\inc}(\mu)=0$ is
  equivalent to $\ker(\Phi_\mu)=\im(\delta_\mu)$, because
  $H^2_{\inc}(\mu)=\ker(\Phi_\mu)/\im(\delta_\mu)$ by definition.

  Putting these together yields
  \[
    \im(\delta_\mu)=T_\mu(G\cdot\mu)\subseteq T_\mu Z_{\red}\subseteq \ker(\Phi_\mu)=\im(\delta_\mu),
  \]
  hence $T_\mu Z_{\red}=T_\mu(G\cdot\mu)$ and therefore
  $\dim(Z_{\red})=\dim T_\mu Z_{\red}=\dim T_\mu(G\cdot\mu)=\dim(G\cdot\mu)$ since
  $\mu\in U=(Z_{\red})_{\sm}$. As $G\cdot\mu$ is locally closed for an algebraic group action,
  Lemma~\ref{lem:locally-closed-same-dim-open} shows that $G\cdot\mu$ is Zariski open in
  $Z_{\red}$.
\end{proof}

The converse direction is subtler because, in general, one only has an inclusion
$T_\mu Z_{\red}\subseteq T_\mu X=\ker(\Phi_\mu)$, and equality can fail when the quadratic
equations defining $X$ do not generate the radical ideal of $Z_{\red}$ to first order at $\mu$.

\section{Quadratic Obstructions and Anisotropy}\label{sec:quadratic-anisotropy}

\subsection{The quadratic obstruction}\label{subsec:anisotropy-second-order}

\begin{proposition}\label{prop:second-order-lift}
  Fix $\mu\in X(\Bbbk)$ and $\alpha,\beta\in A_W$. Set $\mu_t:=\mu+t\alpha+t^2\beta$.  Then
  $\mu_t\in X\bigl(\Bbbk[t]/(t^3)\bigr)$ if and only if $\Phi_\mu(\alpha)=0$ and
  $2 \Phi_\mu(\beta)+\Theta(\alpha,\alpha)=0$ in $Q^\vee$.
\end{proposition}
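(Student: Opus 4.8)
The plan is to expand $f_q(\mu_t)$ as a polynomial in $t$ and collect the coefficients of $t^0$, $t^1$, and $t^2$, discarding all terms of degree $\geq 3$ since we work modulo $(t^3)$. Because each $f_q$ is a homogeneous quadratic form with associated symmetric bilinear form $q(\cdot,\cdot)$, the substitution $\nu = \mu + t\alpha + t^2\beta$ is a purely formal bilinear computation: $f_q(\mu_t) = q(\mu_t,\mu_t)$ expands by bilinearity into a sum of nine terms $q(a_i,a_j)$ with $a_0=\mu$, $a_1=t\alpha$, $a_2=t^2\beta$, each weighted by the appropriate power of $t$. Grouping by total $t$-degree, the $t^0$ term is $q(\mu,\mu)=f_q(\mu)$, the $t^1$ term is $2q(\mu,\alpha)$, and the $t^2$ term is $q(\alpha,\alpha)+2q(\mu,\beta)$; all remaining contributions ($q(\mu,\beta)$ paired with $q(\beta,\mu)$ aside, and the $q(\alpha,\beta)$, $q(\beta,\beta)$ terms) carry $t^3$ or higher and vanish mod $(t^3)$.

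Next I would recall that $\mu\in X(\Bbbk)$ gives $f_q(\mu)=0$, so the $t^0$ coefficient is automatically zero. By definition of $X=\Var(Q)$ as the common zero locus of the $f_q$ (Proposition~\ref{prop:Var-as-pullback}), the condition $\mu_t\in X(\Bbbk[t]/(t^3))$ is equivalent to the vanishing of $f_q(\mu_t)$ in $\Bbbk[t]/(t^3)$ for every $q\in Q$, i.e. to the simultaneous vanishing of the $t^1$ and $t^2$ coefficients for all $q$. The $t^1$ condition reads $q(\mu,\alpha)=0$ for all $q\in Q$, which by the definition of $\Theta$ in~\eqref{eq:Theta-evaluation} and of $\Phi_\mu(\nu)=\Theta(\mu,\nu)$ is exactly $\Phi_\mu(\alpha)=0$ in $Q^\vee$. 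The $t^2$ condition reads $q(\alpha,\alpha)+2q(\mu,\beta)=0$ for all $q\in Q$, which is precisely $\Theta(\alpha,\alpha)+2\Phi_\mu(\beta)=0$ in $Q^\vee$.

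I would then note that it suffices to check these conditions on a basis $(q_1,\dots,q_r)$ of $Q$, since both $f_q$ and $\Theta$ are linear in $q$, matching the argument already used in Lemma~\ref{lem:Inc-as-zerolocus}; this makes the equivalence with membership in the scheme $X$, defined by the ideal generated by $(f_{q_1},\dots,f_{q_r})$, completely explicit. The only point requiring any care is bookkeeping of the $t$-degrees in the nine-term expansion — in particular confirming that the cross term $q(t\alpha, t^2\beta)$ and its symmetric partner contribute only at order $t^3$, hence drop out — but this is routine given $\mathrm{char}(\Bbbk)=0$ so that the factor $2$ and the halving in the polarization identity are harmless. There is no substantive obstacle; the statement is essentially a Taylor expansion of the defining equations truncated at second order, and the proof is a direct verification once the bilinear expansion is organized by $t$-degree.
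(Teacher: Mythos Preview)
Your proof is correct and follows exactly the same approach as the paper: expand $q(\mu_t,\mu_t)$ bilinearly in $\Bbbk[t]/(t^3)$, use $q(\mu,\mu)=0$, and read off the $t^1$ and $t^2$ coefficients as $\Phi_\mu(\alpha)$ and $2\Phi_\mu(\beta)+\Theta(\alpha,\alpha)$. The paper's version is simply more terse, omitting the nine-term bookkeeping and the basis remark.
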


\begin{proof}
  For each $q\in Q$ we compute in $\Bbbk[t]/(t^3)$:
  \[
    f_q(\mu_t)=q(\mu_t,\mu_t)
    = q(\mu,\mu)+2t q(\mu,\alpha) + t^2\bigl(2q(\mu,\beta) + q(\alpha,\alpha)\bigr).
  \]
  Since $\mu\in X$, $q(\mu,\mu)=0$. Thus $f_q(\mu_t)=0$ in $\Bbbk[t]/(t^3)$ for all $q$ if and only
  if $q(\mu,\alpha)=0$ and $2q(\mu,\beta)+q(\alpha,\alpha)=0$ for all $q$, i.e.
  $\Phi_\mu(\alpha)=0$ and $2\Phi_\mu(\beta)+\Theta(\alpha,\alpha)=0$.
\end{proof}

Write $S_\mu:=\ker(\Phi_\mu)$ and $N_\mu:=\coker(\Phi_\mu)$ for the ``tangent'' and ``incidence
cokernel'' spaces at $\mu$ in the sense of Section~\ref{subsec:tangent-kernel}.
Proposition~\ref{prop:second-order-lift} shows that for a tangent direction $\alpha\in S_\mu$ the
obstruction to finding a second-order term $\beta$ lies in the class of $\Theta(\alpha,\alpha)$ in
$N_\mu$.

\subsection{Bilinear obstruction pairing}\label{subsec:anisotropy-kappa}

\begin{definition}\label{def:bilinear-obstruction}
  For $\mu\in X(\Bbbk)$ define a symmetric $\Bbbk$--bilinear pairing
  \[
    B_\mu\colon S_\mu\times S_\mu\longrightarrow N_\mu,
    \qquad
    B_\mu(\alpha,\beta):=\bigl[\Theta(\alpha,\beta)\bigr]\in \coker(\Phi_\mu).
  \]
  Its diagonal defines a homogeneous quadratic map
  \[
    \widetilde\kappa_{2,\mu}\colon S_\mu\longrightarrow N_\mu,
    \qquad
    \widetilde\kappa_{2,\mu}(\alpha):=B_\mu(\alpha,\alpha)=\bigl[\Theta(\alpha,\alpha)\bigr].
  \]
\end{definition}

\begin{lemma}\label{lem:cross-terms-exact}
  For $\mu\in X(\Bbbk)$, $\alpha\in S_\mu$, and $\xi\in\mathfrak g$, one has
  $\Theta\bigl(\alpha,\delta_\mu(\xi)\bigr)\in \im(\Phi_\mu)\subseteq Q^\vee$.
\end{lemma}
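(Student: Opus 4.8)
The plan is to reduce the claim to a $G$-invariance identity for the polarized quadratic form and then differentiate. Recall that $X$ is $G$-stable, so for every $g \in G$ and every $\nu \in A_W$ the function $q(g\cdot\nu, g\cdot\nu)$ equals $q(\nu,\nu)$ up to the failure of $q$ itself to be $G$-invariant; more precisely, since $Q$ is a $G$-stable subspace of $\Sym^2(A_W^\vee)$, for each $q \in Q$ and $g \in G$ we have $q(g\cdot\nu_1, g\cdot\nu_2) = (g^{-1}\cdot q)(\nu_1,\nu_2)$ where $g^{-1}\cdot q \in Q$. The key observation is the \emph{cocycle-type identity} obtained by differentiating $g \mapsto q(g\cdot\mu, g\cdot\alpha)$ at the identity in the direction $\xi \in \mathfrak g$:
\[
  q\bigl(\delta_\mu(\xi), \alpha\bigr) + q\bigl(\mu, \xi\cdot\alpha\bigr) = (\xi \cdot q)(\mu,\alpha),
\]
where $\xi\cdot q$ denotes the infinitesimal action of $\mathfrak g$ on $Q$, which again lies in $Q$ since $Q$ is $G$-stable. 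Packaging this over all $q \in Q$ gives the identity in $Q^\vee$
\[
  \Theta\bigl(\delta_\mu(\xi), \alpha\bigr) + \Theta\bigl(\mu, \xi\cdot\alpha\bigr) = \bigl(\xi\cdot\Theta\bigr)(\mu,\alpha),
\]
with the right-hand side understood as the coadjoint-type action of $\mathfrak g$ on $Q^\vee$ evaluated appropriately.

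The second step is to dispose of the two non-$\delta$ terms. The term $\Theta(\mu, \xi\cdot\alpha)$ is exactly $\Phi_\mu(\xi\cdot\alpha)$, hence visibly lies in $\im(\Phi_\mu)$. For the right-hand side, I would use that $\alpha \in S_\mu = \ker(\Phi_\mu)$, i.e. $q(\mu,\alpha) = 0$ for all $q \in Q$; since the $\mathfrak g$-action on $Q^\vee$ is by a linear operator and the pairing $q \mapsto q(\mu,\alpha)$ is the zero functional on $Q$, its image under the transpose of the $\mathfrak g$-action — which is how $\xi$ acts on the functional $\Theta(\mu,\alpha) \in Q^\vee$ — need not vanish on the nose, so this term requires care. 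The cleaner route is to differentiate the \emph{two-variable} invariance $q(g\cdot\mu, g\cdot\alpha) = (g^{-1}\cdot q)(\mu,\alpha)$ and observe that the $\mathfrak g$-action on the $Q^\vee$-valued map $\Theta$ is built so that $\Theta(g\cdot\mu, g\cdot\alpha) = g\cdot\Theta(\mu,\alpha)$ as $G$-equivariance of $\Theta$ (stated in Section~\ref{subsec:incidence}); differentiating this at $g = e$ gives
\[
  \Theta\bigl(\delta_\mu(\xi),\alpha\bigr) + \Theta\bigl(\mu, \delta_\alpha(\xi)\bigr) = \xi\cdot\Theta(\mu,\alpha)
\]
where now $\delta_\alpha(\xi) = \xi\cdot\alpha$ and the right side is genuinely the linear $\mathfrak g$-action on $Q^\vee$. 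Since $\alpha \in \ker(\Phi_\mu)$ means $\Theta(\mu,\alpha) = 0$ in $Q^\vee$, the right side vanishes identically, and we are left with $\Theta(\delta_\mu(\xi),\alpha) = -\Theta(\mu,\xi\cdot\alpha) = -\Phi_\mu(\xi\cdot\alpha) \in \im(\Phi_\mu)$.

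The main obstacle is bookkeeping: making sure the $\mathfrak g$-action on $Q^\vee$ used in the equivariance of $\Theta$ is the honest linear coadjoint-type action (so that $\Theta(\mu,\alpha)=0$ really kills the right-hand side), rather than some affine or inhomogeneous variant. Once the equivariance $\Theta(g\cdot\mu, g\cdot\nu) = g\cdot\Theta(\mu,\nu)$ is taken as given from \eqref{eq:Theta-evaluation} and Lemma~\ref{lem:Phi-delta-zero}'s derivation, the differentiation is a one-line Leibniz computation and the conclusion is immediate. I would therefore structure the write-up as: (1) state $G$-equivariance of $\Theta$ and differentiate at the identity to get the Leibniz identity $\Theta(\delta_\mu(\xi),\alpha) + \Theta(\mu,\delta_\alpha(\xi)) = \xi\cdot\Theta(\mu,\alpha)$; (2) use $\alpha \in \ker(\Phi_\mu)$ to kill the right-hand side; (3) identify the surviving cross term $\Theta(\mu,\delta_\alpha(\xi))$ with $\Phi_\mu(\xi\cdot\alpha)$, which lies in $\im(\Phi_\mu)$ by definition.
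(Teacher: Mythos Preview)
Your proposal is correct and, once you settle on the ``cleaner route,'' it is essentially identical to the paper's proof: differentiate the $G$--equivariance of $\Theta$ at the identity to get the Leibniz identity $\xi\cdot\Theta(\mu,\alpha)=\Theta(\delta_\mu(\xi),\alpha)+\Theta(\mu,\xi\cdot\alpha)$, kill the left side using $\Theta(\mu,\alpha)=0$, and identify the remaining term as $-\Phi_\mu(\xi\cdot\alpha)\in\im(\Phi_\mu)$. The only cosmetic difference is that the paper also invokes symmetry of $\Theta$ to pass from $\Theta(\delta_\mu(\xi),\alpha)$ to $\Theta(\alpha,\delta_\mu(\xi))$, which you should mention in the final write-up; your first exploratory paragraph (with the $(\xi\cdot q)(\mu,\alpha)$ term) can be dropped entirely.
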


\begin{proof}
  By $G$--equivariance of $\Theta$ (Section~\ref{subsec:incidence}), differentiating at the
  identity gives, for all $\nu,\alpha\in A_W$ and $\xi\in\mathfrak g$,
  \[
    \xi\cdot\Theta(\nu,\alpha)=\Theta(\xi\cdot\nu,\alpha)+\Theta(\nu,\xi\cdot\alpha).
  \]
  Set $\nu=\mu$ and assume $\alpha\in S_\mu$, i.e. $\Theta(\mu,\alpha)=0$. Then
  \[
    0=\xi\cdot\Theta(\mu,\alpha)=\Theta(\xi\cdot\mu,\alpha)+\Theta(\mu,\xi\cdot\alpha)
    =
    \Theta\bigl(\delta_\mu(\xi),\alpha\bigr)+\Phi_\mu(\xi\cdot\alpha).
  \]
  Hence $\Theta(\delta_\mu(\xi),\alpha)\in\im(\Phi_\mu)$. Symmetry of $\Theta$ yields the claim.
\end{proof}

\begin{proposition}\label{prop:kappa-well-defined}
  Fix $\mu\in X(\Bbbk)$. The diagonal obstruction $\widetilde\kappa_{2,\mu}$ descends to a
  well-defined map
  \[
    \kappa^{\inc}_{2,\mu}\colon H^2_{\inc}(\mu)\longrightarrow H^3_{\inc}(\mu),
    \qquad
    \kappa^{\inc}_{2,\mu}([\alpha])=\bigl[\Theta(\alpha,\alpha)\bigr].
  \]
  Moreover, $\kappa^{\inc}_{2,\mu}([\alpha])=0$ if and only if the class
  $[\alpha]\in H^2_{\inc}(\mu)$ admits a second-order lift in the sense of
  Proposition~\ref{prop:second-order-lift}.
\end{proposition}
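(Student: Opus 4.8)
The plan is to show that $\widetilde\kappa_{2,\mu}$ factors through the quotient $S_\mu \twoheadrightarrow H^2_{\inc}(\mu) = S_\mu/\im(\delta_\mu)$ on the source and through $N_\mu \twoheadrightarrow H^3_{\inc}(\mu) = N_\mu/\,?$ on the target. Here one must be careful: by Definition~\ref{def:incidence-complex}, $\mathcal H^3_X = \mathcal N = \coker(\Phi_X)$, so at the fiber level $H^3_{\inc}(\mu) = N_\mu = \coker(\Phi_\mu)$ exactly, and no further quotient is needed on the target. Hence the only thing to check is well-definedness in the source variable: if $\alpha' = \alpha + \delta_\mu(\xi)$ for some $\xi \in \mathfrak g$, then $[\Theta(\alpha',\alpha')] = [\Theta(\alpha,\alpha)]$ in $N_\mu$. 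Expanding bilinearly and using symmetry of $\Theta$,
\[
  \Theta(\alpha',\alpha') = \Theta(\alpha,\alpha) + 2\,\Theta\bigl(\alpha,\delta_\mu(\xi)\bigr) + \Theta\bigl(\delta_\mu(\xi),\delta_\mu(\xi)\bigr).
\]
By Lemma~\ref{lem:cross-terms-exact}, $\Theta(\alpha,\delta_\mu(\xi)) \in \im(\Phi_\mu)$; applying the same lemma with $\alpha$ replaced by $\delta_\mu(\xi)$ (which lies in $S_\mu$ by Lemma~\ref{lem:Phi-delta-zero}) shows $\Theta(\delta_\mu(\xi),\delta_\mu(\xi)) \in \im(\Phi_\mu)$ as well. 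Therefore the correction terms vanish in $\coker(\Phi_\mu) = N_\mu$, so $\widetilde\kappa_{2,\mu}(\alpha') \equiv \widetilde\kappa_{2,\mu}(\alpha)$ and the induced map $\kappa^{\inc}_{2,\mu}$ is well-defined. The formula $\kappa^{\inc}_{2,\mu}([\alpha]) = [\Theta(\alpha,\alpha)]$ is then immediate from the definition.

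For the second assertion, I unwind what ``$[\alpha] \in H^2_{\inc}(\mu)$ admits a second-order lift'' means. A class $[\alpha] \in H^2_{\inc}(\mu)$ is represented by some $\alpha \in S_\mu = \ker(\Phi_\mu)$, and by Proposition~\ref{prop:second-order-lift} the direction $\alpha$ lifts to a $\Bbbk[t]/(t^3)$-point $\mu_t = \mu + t\alpha + t^2\beta$ of $X$ precisely when there exists $\beta \in A_W$ with $2\Phi_\mu(\beta) + \Theta(\alpha,\alpha) = 0$, i.e. when $\Theta(\alpha,\alpha) \in \im(\Phi_\mu)$ (using that $2$ is invertible in $\Bbbk$), i.e. when $[\Theta(\alpha,\alpha)] = 0$ in $N_\mu = H^3_{\inc}(\mu)$. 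Thus for a \emph{fixed representative} $\alpha$, liftability of $\alpha$ is equivalent to $\kappa^{\inc}_{2,\mu}([\alpha]) = 0$. It remains to observe that this property is representative-independent in the appropriate sense: if $\alpha$ lifts and $\alpha' = \alpha + \delta_\mu(\xi)$, then $\alpha'$ also lifts, since the correction to $\Theta(\alpha,\alpha)$ computed above lies in $\im(\Phi_\mu)$, so $\Theta(\alpha',\alpha') \in \im(\Phi_\mu)$ as well. This consistency is exactly the well-definedness already established, reread at the level of the lifting criterion; indeed $\delta_\mu(\xi)$-translates correspond to gauge/isotriviality changes of the first-order datum, so ``second-order lift modulo isotriviality'' is the correct invariant notion.

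The only genuine subtlety — and the step I would flag as the main point to get right — is the identification of the target: one must confirm that $H^3_{\inc}(\mu)$ is literally $\coker(\Phi_\mu)$ with \emph{no} further quotient, so that the obstruction class $[\Theta(\alpha,\alpha)]$ is not further annihilated. This is immediate from Definition~\ref{def:incidence-complex}, where the complex terminates in degree $3$, so $\mathcal H^3_X = \coker(\Phi_X)$ and hence $H^3_{\inc}(\mu) = \coker(\Phi_\mu) = N_\mu$; no cocycle condition intervenes. Everything else is bilinear bookkeeping: expand $\Theta$ on the perturbed representative, invoke Lemma~\ref{lem:cross-terms-exact} twice (once for $\Theta(\alpha,\delta_\mu(\xi))$ and once for $\Theta(\delta_\mu(\xi),\delta_\mu(\xi))$, the latter using $\delta_\mu(\xi)\in S_\mu$), and read off the equivalence with the lifting criterion of Proposition~\ref{prop:second-order-lift} after clearing the invertible factor $2$.
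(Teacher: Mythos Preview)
Your proof is correct and follows essentially the same approach as the paper: expand $\Theta(\alpha+\delta_\mu(\xi),\alpha+\delta_\mu(\xi))$, invoke Lemma~\ref{lem:cross-terms-exact} twice (using Lemma~\ref{lem:Phi-delta-zero} to place $\delta_\mu(\xi)$ in $S_\mu$ for the square term), and read off the lifting criterion from Proposition~\ref{prop:second-order-lift} after clearing the invertible factor $2$. Your additional remarks on the identification $H^3_{\inc}(\mu)=\coker(\Phi_\mu)$ and on representative-independence of liftability are correct elaborations, but the argument is otherwise the same as the paper's.
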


\begin{proof}
  We show invariance under changing representatives $\alpha\mapsto \alpha+\delta_\mu(\xi)$.  For
  $\alpha\in S_\mu$ and $\xi\in\mathfrak g$, expand using bilinearity:
  \[
    \Theta(\alpha+\delta_\mu(\xi),\alpha+\delta_\mu(\xi))
    =
    \Theta(\alpha,\alpha)
    +2\Theta\bigl(\alpha,\delta_\mu(\xi)\bigr)
    +\Theta\bigl(\delta_\mu(\xi),\delta_\mu(\xi)\bigr).
  \]
  By Lemma~\ref{lem:cross-terms-exact}, the cross term lies in $\im(\Phi_\mu)$. For the square
  term, note that Lemma~\ref{lem:Phi-delta-zero} gives $\delta_\mu(\xi)\in S_\mu$, hence applying
  Lemma~\ref{lem:cross-terms-exact} with $\alpha=\delta_\mu(\xi)$ shows
  $\Theta(\delta_\mu(\xi),\delta_\mu(\xi))\in\im(\Phi_\mu)$. Therefore
  $\Theta(\alpha+\delta_\mu(\xi),\alpha+\delta_\mu(\xi))\equiv\Theta(\alpha,\alpha)\pmod{\im(\Phi_\mu)}$,
  proving well-definedness on $S_\mu/\im(\delta_\mu)=H^2_{\inc}(\mu)$.

  For the lifting criterion, Proposition~\ref{prop:second-order-lift} shows that $\alpha\in S_\mu$
  admits $\beta$ with $\mu+t\alpha+t^2\beta\in X(\Bbbk[t]/(t^3))$ if and only if
  $\Theta(\alpha,\alpha)\in -2 \im(\Phi_\mu)$, equivalently $[\Theta(\alpha,\alpha)]=0$ in
  $\coker(\Phi_\mu)=H^3_{\inc}(\mu)$.
\end{proof}

\subsection{Anisotropy and open orbits}\label{subsec:anisotropy-open}

\begin{definition}\label{def:anisotropic}
  A point $\mu\in X(\Bbbk)$ is \emph{anisotropic} if $\kappa^{\inc}_{2,\mu}([\alpha])=0$ implies
  $[\alpha]=0$ in $H^2_{\inc}(\mu)$.
\end{definition}

\begin{theorem}\label{thm:anisotropy-open-orbit}
  Let $Z\subseteq X$ be the irreducible component containing $\mu$ and assume
  $\mu\in (Z_{\red})_{\sm}(\Bbbk)$. If $\mu$ is anisotropic, then $G\cdot\mu$ is Zariski open in
  $Z_{\red}$.
\end{theorem}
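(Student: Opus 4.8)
The plan is to leverage Theorem~\ref{thm:coh-rigid-implies-open-orbit} by reducing the anisotropic case to the cohomologically rigid case, after restricting to the principal constant-rank locus. First I would observe that since $\mu\in(Z_{\red})_{\sm}$, the local geometry of $Z_{\red}$ at $\mu$ is governed by the reduced ambient equations; the key point is to show that anisotropy forces $T_\mu Z_{\red}=T_\mu(G\cdot\mu)$, which by Lemma~\ref{lem:locally-closed-same-dim-open} (exactly as in the proof of Theorem~\ref{thm:coh-rigid-implies-open-orbit}) yields that $G\cdot\mu$ is Zariski open in $Z_{\red}$. So the entire problem is the tangent-space comparison, and the mechanism must be: any tangent vector to $Z_{\red}$ not in $\im(\delta_\mu)$ would give a nonzero class in $H^2_{\inc}(\mu)$ that \emph{does} admit a second-order lift, contradicting anisotropy via Proposition~\ref{prop:kappa-well-defined}.

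Concretely, suppose for contradiction that $T_\mu Z_{\red}\supsetneq \im(\delta_\mu)=T_\mu(G\cdot\mu)$. Pick $\alpha\in T_\mu Z_{\red}$ whose class $[\alpha]\in S_\mu/\im(\delta_\mu)=H^2_{\inc}(\mu)$ is nonzero (here I use $T_\mu Z_{\red}\subseteq T_\mu X=\ker(\Phi_\mu)=S_\mu$ from Lemma~\ref{lem:tangent-space-kernel}, so $\alpha\in S_\mu$). Because $\mu$ is a smooth point of the \emph{reduced} component $Z_{\red}$, the complete local ring $\widehat{\mathcal O}_{Z_{\red},\mu}$ is a power series ring, so every tangent vector extends to a formal curve: there exist $\beta,\gamma,\dots\in A_W$ with $\mu+t\alpha+t^2\beta+\cdots$ a $\widehat{\mathcal O}_{Z_{\red},\mu}$-point, hence in particular a $\Bbbk[t]/(t^3)$-point of $Z_{\red}\subseteq X$. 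By Proposition~\ref{prop:second-order-lift} this second-order lift forces $2\Phi_\mu(\beta)+\Theta(\alpha,\alpha)=0$, i.e. $[\Theta(\alpha,\alpha)]=0$ in $\coker(\Phi_\mu)=H^3_{\inc}(\mu)$, so $\kappa^{\inc}_{2,\mu}([\alpha])=0$. Anisotropy then gives $[\alpha]=0$, a contradiction. Therefore $T_\mu Z_{\red}=\im(\delta_\mu)=T_\mu(G\cdot\mu)$, and since $\mu\in(Z_{\red})_{\sm}$ we get $\dim Z_{\red}=\dim T_\mu Z_{\red}=\dim(G\cdot\mu)$; as $G\cdot\mu$ is locally closed, Lemma~\ref{lem:locally-closed-same-dim-open} finishes the argument.

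The step I expect to be the main obstacle is the passage from ``$\alpha\in T_\mu Z_{\red}$'' to ``$\alpha$ admits a genuine second-order lift inside $X$ over $\Bbbk[t]/(t^3)$.'' Smoothness of $Z_{\red}$ at $\mu$ gives formal curves in $Z_{\red}$ through every tangent direction, and $Z_{\red}\hookrightarrow X$ is a closed immersion, so a $\Bbbk[t]/(t^3)$-point of $Z_{\red}$ is automatically one of $X$ — but I should double-check that the truncation of a smooth formal curve really realizes the prescribed first-order term with \emph{some} admissible $\beta$, rather than imposing an extra constraint; this is precisely the surjectivity of $Z_{\red}(\Bbbk[t]/(t^3))\to Z_{\red}(\Bbbk[t]/(t^2))$ over $\mu$, which holds because $\widehat{\mathcal O}_{Z_{\red},\mu}$ is regular (formally smooth over $\Bbbk$), so lifts along the square-zero extension $\Bbbk[t]/(t^3)\twoheadrightarrow\Bbbk[t]/(t^2)$ exist unconditionally. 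A secondary subtlety is that $\kappa^{\inc}_{2,\mu}$ is defined via the fiber cohomology $H^i_{\inc}(\mu)$ of the ambient complex, not the sheaf cohomology; but the lifting criterion in Proposition~\ref{prop:kappa-well-defined} is stated purely in terms of these fiber spaces, so no constant-rank hypothesis is actually needed here — the argument goes through verbatim at any $\mu\in(Z_{\red})_{\sm}$. Finally, one remark worth inserting: the hypothesis that the component is \emph{reduced} is essential, since otherwise $T_\mu Z_{\red}\subsetneq T_\mu X$ could fail to be detected by the quadratic part of the ambient equations, and the formal-curve extension argument would not be available.
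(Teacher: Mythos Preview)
Your argument is correct and matches the paper's proof essentially line for line: both suppose $\im(\delta_\mu)\subsetneq T_\mu Z_{\red}$, pick $\alpha$ witnessing the strict inclusion, use formal smoothness of $Z_{\red}$ at $\mu$ to lift the $\Bbbk[t]/(t^2)$-point to a $\Bbbk[t]/(t^3)$-point, and then invoke Proposition~\ref{prop:second-order-lift} (via Proposition~\ref{prop:kappa-well-defined}) to contradict anisotropy. The opening sentence about reducing to Theorem~\ref{thm:coh-rigid-implies-open-orbit} via the constant-rank locus is a red herring that you rightly abandon, and your closing observation that no constant-rank hypothesis is needed is exactly in line with how the paper proceeds.
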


\begin{proof}
  Set $U:=Z_{\red}$. Since $G\cdot\mu\subseteq U$, we have
  \[
    T_\mu(G\cdot\mu)=\im(\delta_\mu)\subseteq T_\mu U \subseteq T_\mu X=\ker(\Phi_\mu)
  \]
  where the last inclusion is Lemma~\ref{lem:tangent-space-kernel}. Suppose $G\cdot\mu$ is not
  Zariski open in $U$. As $\mu$ is a smooth point of $U$, this implies
  $\dim T_\mu(G\cdot\mu)<\dim T_\mu U$, hence $\im(\delta_\mu)\subsetneq T_\mu U$. Choose
  $\alpha\in T_\mu U\setminus\im(\delta_\mu)$, so that $[\alpha]\neq 0$ in
  $\ker(\Phi_\mu)/\im(\delta_\mu)=H^2_{\inc}(\mu)$.
  
  The tangent vector $\alpha$ corresponds to a $\Bbbk[t]/(t^2)$--point of $U$ through $\mu$.
  Because $\mu$ is smooth on $U$ (hence $U\to\Spec(\Bbbk)$ is smooth at $\mu$), $U$ is formally
  smooth at $\mu$, so this $\Bbbk[t]/(t^2)$--point lifts to a $\Bbbk[t]/(t^3)$--point of $U$
  through $\mu$ \cite[Section~37.11]{stacks-project}. Equivalently, there exists $\beta\in A_W$
  such that $\mu_t=\mu+t\alpha+t^2\beta\in U(\Bbbk[t]/(t^3))\subseteq X(\Bbbk[t]/(t^3))$. By
  Proposition~\ref{prop:second-order-lift} we obtain $\kappa^{\inc}_{2,\mu}([\alpha])=0$,
  contradicting anisotropy. Therefore $\im(\delta_\mu)=T_\mu U$, hence $\dim(G\cdot\mu)=\dim U$,
  and since $G\cdot\mu$ is locally closed in $U$ it is Zariski open.
\end{proof}

\begin{remark}\label{rem:anisotropy-orbit-invariant}
  Anisotropy is constant along $G$--orbits: for $g\in G$, transport of structure identifies the
  complexes $\mathcal C_X^\bullet(\mu)$ and $\mathcal C_X^\bullet(g\cdot\mu)$, intertwining
  $\kappa^{\inc}_{2,\mu}$ and $\kappa^{\inc}_{2,g\cdot\mu}$. In particular, if $\mu$ is anisotropic
  then every point of $G\cdot\mu$ is anisotropic.
\end{remark}

\subsection{The anisotropic locus}\label{subsec:anisotropy-open-condition}

Anisotropy is defined pointwise by the condition $\ker(\kappa^{\inc}_{2,\mu})=0$, hence makes sense
whenever $\kappa^{\inc}_{2,\mu}$ is defined (in particular on $Z_{\sm}$ for a reduced component
$Z\subseteq X_{\red}$).  Since Theorem~\ref{thm:anisotropy-open-orbit} identifies anisotropy as a
sufficient criterion for geometric rigidity, the openness of the anisotropic locus can be deduced
formally from irreducibility.

\begin{proposition}\label{prop:anisotropy-open-locus}
  Let $Z\subseteq X_{\red}$ be an irreducible component. The anisotropic locus
  \[
    U_{\aniso}:=\{\mu\in Z_{\sm}\mid \ker(\kappa^{\inc}_{2,\mu})=0\}
  \]
  is Zariski open in $Z$ (hence in $Z_{\sm}$). Consequently, the anisotropic locus in $X_{\red}$ is
  Zariski open.
\end{proposition}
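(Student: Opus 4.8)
The plan is to show that $U_{\aniso}$ is constructible and stable under generization in $Z_{\sm}$, so that, $Z_{\sm}$ being irreducible, it is either empty or contains a dense open subset; and then to upgrade this to full openness by a direct semicontinuity argument on the constant-rank locus $U^\circ$ together with the orbit-invariance of anisotropy.

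First I would reduce to the constant-rank locus. By Proposition~\ref{prop:bundle-package}, on $U^\circ$ the sheaves $\mathcal H_X^2=\mathcal S/\im(\delta)$ and $\mathcal H_X^3=\mathcal N$ are vector bundles, of ranks $h^2_Z$ and $h^3_Z$ say, and the fiberwise comparison maps $\mathcal H_X^i(\mu)\to H^i_{\inc}(\mu)$ are isomorphisms there. The family of quadratic maps $\mu\mapsto\kappa^{\inc}_{2,\mu}$ assembles, via Proposition~\ref{prop:kappa-well-defined} and the bilinear pairing $B_\mu$ of Definition~\ref{def:bilinear-obstruction}, into a morphism of vector bundles $\mathcal H_X^2|_{U^\circ}\to\mathcal H_X^3|_{U^\circ}$ after applying $\Sym^2$, i.e. a global section of $\mathcal Hom(\Sym^2\mathcal H_X^2,\mathcal H_X^3)$ on $U^\circ$ (this is the globalization of $\kappa_2$ promised in Section~\ref{sec:quadratic-anisotropy}). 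Anisotropy at $\mu\in U^\circ$ is the condition that the associated quadric map $H^2_{\inc}(\mu)\to H^3_{\inc}(\mu)$ has no nonzero zero. I would then invoke the standard fact that, for a family of quadratic (or more generally homogeneous) maps between vector bundles over a Noetherian base, the locus where the zero scheme of the fiber map is set-theoretically $\{0\}$ is open: concretely, projectivize the source bundle to get $\mathbb P(\mathcal H_X^2|_{U^\circ})\to U^\circ$, let $\mathcal Z\subseteq\mathbb P(\mathcal H_X^2|_{U^\circ})$ be the vanishing locus of the induced section of $\mathcal O(2)\otimes(\text{pullback of }\mathcal H_X^3)$, and note that the image of the proper morphism $\mathcal Z\to U^\circ$ is closed; its complement is exactly $U_{\aniso}\cap U^\circ$, which is therefore open in $U^\circ$, hence open in $Z$.

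Next I would promote openness from $U^\circ$ to all of $Z_{\sm}$. Here the key point is that $U^\circ$ is dense open in $Z_{\sm}$ and anisotropy is constant along $G$-orbits by Remark~\ref{rem:anisotropy-orbit-invariant}; moreover, by Theorem~\ref{thm:anisotropy-open-orbit}, any anisotropic $\mu\in Z_{\sm}$ has $G\cdot\mu$ Zariski open in $Z_{\red}$, hence $G\cdot\mu\cap U^\circ\neq\varnothing$, so anisotropy of $\mu$ is equivalent to anisotropy of some (any) point of $G\cdot\mu$ lying in $U^\circ$. Thus $U_{\aniso}=G\cdot(U_{\aniso}\cap U^\circ)$: it is a union of open orbits, hence a union of Zariski open subsets of $Z$, hence open. (One must observe that the complement $Z_{\sm}\setminus U^\circ$ contains no anisotropic points whose orbit closure is positive-codimensional — but this is immediate since an anisotropic point always has open, hence dense, orbit in the irreducible $Z_{\red}$, so its orbit meets the dense open $U^\circ$.) Finally, the statement for $X_{\red}$ follows since its smooth locus is the disjoint union of the $(Z_i)_{\sm}$ over the finitely many components $Z_i$, and a finite union of opens is open.

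The main obstacle I expect is the globalization step: making precise that $\kappa^{\inc}_{2,\mu}$ varies algebraically in a neighbourhood of a point of $Z_{\sm}$ \emph{not} assumed to lie in $U^\circ$. Off $U^\circ$ the dimensions $\dim H^2_{\inc}(\mu)$ and $\dim H^3_{\inc}(\mu)$ jump, the cohomology sheaves need not be locally free, and the comparison maps $\mathcal H_X^i(\mu)\to H^i_{\inc}(\mu)$ need not be isomorphisms, so there is no honest bundle map to projectivize there. The orbit-transitivity argument of the previous paragraph is precisely what lets me sidestep this: I never need $\kappa_2$ as a family near such points, only the pointwise equivalence ``$\mu$ anisotropic $\iff$ its orbit meets $U^\circ$ in an anisotropic point,'' which rests on Theorem~\ref{thm:anisotropy-open-orbit} and Remark~\ref{rem:anisotropy-orbit-invariant}. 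The remaining care is bookkeeping: checking that the proper pushforward argument on $\mathbb P(\mathcal H_X^2|_{U^\circ})$ applies (properness of $\mathbb P(\mathcal E)\to U^\circ$, closedness of images of proper morphisms, and that $\mathcal Z$ is indeed closed as the zero locus of a section of a line bundle twisted by a pullback).
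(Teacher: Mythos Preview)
Your proposal is correct, but it is considerably more elaborate than the paper's argument, and your Step~1 is in fact redundant given your Step~2. The paper's proof is a three-line version of your promotion step alone: $U_{\aniso}$ is a union of $G$--orbits by Remark~\ref{rem:anisotropy-orbit-invariant}; by Theorem~\ref{thm:anisotropy-open-orbit}, every such orbit is Zariski open in the irreducible variety $Z$; an irreducible variety contains at most one Zariski open orbit, so $U_{\aniso}$ is either empty or equal to that orbit, hence open. No constant-rank locus, no projectivization, no properness argument is needed. You essentially noticed this yourself when you wrote that the orbit-transitivity argument ``lets me sidestep'' the globalization issue off $U^\circ$; in fact it sidesteps all of Step~1, because once Theorem~\ref{thm:anisotropy-open-orbit} tells you every anisotropic orbit is open, there is nothing left to check on $U^\circ$ separately.

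What your longer route does buy is an independent proof that anisotropy is Zariski open on $U^\circ$ which does not rely on Theorem~\ref{thm:anisotropy-open-orbit}. That would be genuinely useful in settings where the rigidity theorem is unavailable (e.g.\ if one weakened the smoothness hypothesis, or worked with a variant of $\kappa_2$ that does not force open orbits). But as a proof of the stated proposition, the paper's formal argument from irreducibility is both shorter and cleaner.
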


\begin{proof}
  By $G$--equivariance of $\kappa^{\inc}_{2,\mu}$, anisotropy is constant along $G$--orbits, hence
  $U_{\aniso}$ is a union of $G$--orbits. If $\mu\in U_{\aniso}$, then
  Theorem~\ref{thm:anisotropy-open-orbit} implies that $G\cdot\mu$ is Zariski open in $Z$. Since
  $Z$ is irreducible, it contains at most one Zariski open $G$--orbit, so $U_{\aniso}$ is either
  empty or equal to that open orbit, and in either case it is Zariski open. The final claim follows
  by taking the union over irreducible components of $X_{\red}$.
\end{proof}

\subsection{A cohomologically non-rigid anisotropic point}\label{subsec:anisotropic-example-sl2-sym14}

We record a classical example in which geometric rigidity (existence of an open $\GL(L)$--orbit in
the Lie variety) is already detected by the quadratic obstruction although the second cohomology
does not vanish.  This phenomenon goes back to Richardson~\cite{Richardson1967}.

Let $M:=\Sym^{14}(\mathbb C^2)$, $\dim M=15$, and set $L:=\fsl_2(\mathbb C)\ltimes M$, where $M$ is
an abelian ideal. Let $\mu\in\Hom(\Lambda^2L,L)$ denote the corresponding Lie bracket, so
$\mu\in X_{\Lie}(\mathbb C)$.  On the Lie operadic locus, the incidence complex identifies with the
Chevalley--Eilenberg deformation dg Lie algebra with adjoint coefficients (comparison in
\cite{Kaygun2025}); we write $H^i_{\Lie}(L):=H^i_{\mathrm{CE}}(L,L)$ and implicitly use the
identifications
\[
  H^2_{\inc}(\mu)\cong H^2_{\Lie}(L),\qquad
  H^3_{\inc}(\mu)\cong H^3_{\Lie}(L),\qquad
  \kappa^{\inc}_{2,\mu}([\alpha])=\Bigl[\frac12[\alpha,\alpha]_{\mathrm{NR}}\Bigr],
\]
where $[\ ,\ ]_{\mathrm{NR}}$ denotes the Nijenhuis--Richardson bracket on
$C^\bullet_{\mathrm{CE}}(L,L)$.

\begin{proposition}\label{prop:sl2-sym14-richardson-rigid-h2}
  The Lie algebra $L=\fsl_2(\mathbb C)\ltimes \Sym^{14}(\mathbb C^2)$ is geometrically rigid (its
  $\GL(L)$--orbit is open in the Lie variety) but $H^2_{\Lie}(L)\neq 0$.
\end{proposition}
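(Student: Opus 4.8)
The plan is to prove this in two independent halves: the cohomological computation $H^2_{\Lie}(L)\cong\mathbb C\neq 0$, and the geometric rigidity of $L$. For the first half, I would use the semidirect product structure $L=\fsl_2\ltimes M$ with $M=\Sym^{14}(\mathbb C^2)$ abelian, together with the Hochschild--Serre spectral sequence for the ideal $M\trianglelefteq L$ (or equivalently a direct Chevalley--Eilenberg computation exploiting $\fsl_2$-reductivity). Since $\fsl_2$ is semisimple and $\mathrm{char}(\Bbbk)=0$, Whitehead's lemmas kill $H^1(\fsl_2,-)$ and $H^2(\fsl_2,-)$ with any coefficients, so the spectral sequence collapses and $H^2_{\Lie}(L,L)$ is computed from the $M$-direction: it reduces to $\fsl_2$-invariants in $\Hom(\Lambda^2 M, \fsl_2)\oplus\Hom(\Lambda^2 M, M)\oplus\Hom(M\otimes\fsl_2,\dots)$-type terms modulo coboundaries. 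Decomposing $\Lambda^2(\Sym^{14}\mathbb C^2)$ and $\Sym^{14}\otimes(\Sym^2\oplus\text{triv})$ into irreducibles via Clebsch--Gordan and counting the multiplicity of the relevant summands ($\Sym^2\mathbb C^2\cong\fsl_2$, and $\Sym^{14}\mathbb C^2\cong M$) yields a one-dimensional space of invariant $2$-cocycles that is not a coboundary; this pins down $H^2_{\Lie}(L)\cong\mathbb C$. This is essentially Richardson's original computation and I would cite \cite{Richardson1967} for the explicit arithmetic rather than reproduce it.

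For the second half — geometric rigidity — I would invoke Richardson's stability criterion \cite{Richardson1967}: a Lie algebra of the form $\fsl_2\ltimes M$ with $M$ a module over a semisimple Levi factor is geometrically rigid because every nearby Lie algebra law is, up to $\GL$-conjugacy, again a semidirect product of the (rigid) semisimple part with a deformed module structure on a space of the same dimension, and semisimple-module data is itself rigid (the semisimple part cannot deform by Whitehead, and module structures over a fixed semisimple algebra on a fixed vector space form finitely many orbits). Concretely, one shows $\dim(G\cdot\mu)=\dim Z_{\red}$ at the component $Z$ containing $\mu$ by: (i) computing $\dim G\cdot\mu=\dim\mathfrak{gl}(L)-\dim\Der(L,\mu)$ via $H^1_{\inc}(\mu)=\Der(L,\mu)$; (ii) showing the component $Z_{\red}$ through $\mu$ has this same dimension, using that every first-order deformation either is tangent to the orbit or is obstructed at second order. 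It is this last point where the machinery of the paper applies: the single nontrivial class in $H^2_{\Lie}(L)$ has nonzero Nijenhuis--Richardson square $[\alpha,\alpha]_{\mathrm{NR}}$ with nonvanishing class in $H^3_{\Lie}(L)$, i.e. $\mu$ is anisotropic in the sense of Definition~\ref{def:anisotropic}, so Theorem~\ref{thm:anisotropy-open-orbit} forces $G\cdot\mu$ open in $Z_{\red}$. (That anisotropy computation is precisely the content of the forthcoming Proposition~\ref{prop:sl2-sym14-anisotropic}, which I would forward-reference; alternatively one cites Richardson directly.)

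The main obstacle is verifying that the quadratic obstruction is genuinely nonzero, i.e. that $[\alpha,\alpha]_{\mathrm{NR}}\neq 0$ in $H^3_{\Lie}(L)$ for the generator $\alpha$ of $H^2_{\Lie}(L)$ — a vanishing here would leave open the possibility that the first-order deformation integrates to a curve transverse to the orbit, destroying rigidity. Handling this requires either an explicit representative cochain for $\alpha$ (realizable as a specific $\fsl_2$-equivariant map built from the Clebsch--Gordan projection $\Lambda^2\Sym^{14}\to\Sym^{14}$ and a compatible $\fsl_2$-action twist) and a direct computation of the cup-bracket, or the cleaner route of quoting Richardson's theorem that this family is rigid with $H^2\neq 0$ and then reading off nonvanishing of $\kappa_2$ from Theorem~\ref{thm:coh-rigid-implies-open-orbit}'s contrapositive combined with Theorem~\ref{thm:anisotropy-open-orbit}. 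A secondary subtlety is the component-dimension comparison in step (ii) above: one must ensure $\mu$ lies on a \emph{single} reduced irreducible component and is a smooth point of it, which follows because the orbit, once known to be open, is smooth and its closure is an irreducible component — but to even start the argument one needs $\mu\in(Z_{\red})_{\sm}$, which I would establish by noting that an open orbit is automatically contained in the smooth locus of its closure. I would structure the writeup so that the hard analytic content (the two representation-theoretic computations) is cited to \cite{Richardson1967}, and the paper's contribution is the clean repackaging via anisotropy.
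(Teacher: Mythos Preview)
The paper's proof of this proposition is a single citation: it identifies $L$ with Richardson's $L_7$ (where $L_n=\fsl_2\ltimes\Sym^{2n}\mathbb C^2$) and quotes \cite[\S5, Prop.~5.1]{Richardson1967}, which asserts that $L_n$ is rigid with $H^2(L_n,L_n)\neq 0$ for every odd $n>5$. Your proposal to ultimately cite Richardson for both halves is therefore aligned with the paper, but you have over-engineered the surrounding scaffolding and in doing so introduced two genuine errors.

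First, your ``cleaner route'' of deducing anisotropy from rigidity via the contrapositives of Theorems~\ref{thm:coh-rigid-implies-open-orbit} and~\ref{thm:anisotropy-open-orbit} is logically invalid. Those theorems give one-way implications ($H^2=0\Rightarrow$ open orbit; anisotropic $\Rightarrow$ open orbit), so their contrapositives say nothing about a point that \emph{is} rigid. Openness of the orbit does not force $\kappa^{\inc}_{2,\mu}$ to be anisotropic; that requires the independent transvectant computation of Proposition~\ref{prop:sl2-sym14-anisotropic}. In the paper's logic, Proposition~\ref{prop:sl2-sym14-richardson-rigid-h2} is input from Richardson, and Proposition~\ref{prop:sl2-sym14-anisotropic} is a separate verification that the paper's anisotropy criterion detects this input --- not the other way around.

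Second, your smoothness step is circular: you propose to verify $\mu\in(Z_{\red})_{\sm}$ by noting that an open orbit lies in the smooth locus of its closure, but openness of the orbit is what you are trying to prove, and Theorem~\ref{thm:anisotropy-open-orbit} requires smoothness as a hypothesis. (The paper sidesteps this by not invoking Theorem~\ref{thm:anisotropy-open-orbit} here at all.) Separately, your paraphrase of Richardson's stability criterion is too strong: semidirect products $\fsl_2\ltimes M$ are not rigid in general --- Richardson's Prop.~5.1 singles out the modules $\Sym^{2n}\mathbb C^2$ with $n$ odd and $n>5$.
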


\begin{proof}
  In~\cite[\S5]{Richardson1967}, Richardson fixes a $3$--dimensional simple Lie algebra
  $S\simeq\fsl_2(\mathbb C)$, lets $\rho$ be the irreducible representation of highest weight $2n$
  on $W\simeq \mathbb C^{2n+1}$, and defines $L_n:=S\ltimes_\rho W$.  By~\cite[\S5, Prop.~5.1,
  p.~344]{Richardson1967}, for every odd integer $n>5$ the Lie algebra $L_n$ is rigid and
  $H^2(L_n,L_n)\neq 0$.  For $n=7$ one has $W\simeq\Sym^{2n}(\mathbb C^2)=\Sym^{14}(\mathbb C^2)$,
  hence $L=L_7$, and the claim follows.
\end{proof}

We now isolate a concrete generator of $H^2_{\Lie}(L)$ and verify anisotropy by an explicit
Jacobiator computation.

Let $\Phi\in\Hom(\Lambda^2M,M)^{\fsl_2(\mathbb C)}$ be a nonzero alternating $\fsl_2$--equivariant
map, and let $\varphi\in\Hom(\Lambda^2L,L)$ be its extension by zero, i.e.
$\varphi|_{\Lambda^2M}=\Phi$ and $\varphi(x,-)=0$ if one argument lies in $\fsl_2(\mathbb C)$.

\begin{proposition}\label{prop:sl2-sym14-h2-dim1}
  One has $H^2_{\Lie}(L)\cong\mathbb C$. More precisely, $[\varphi]$ spans $H^2_{\Lie}(L)$, and
  $\Phi$ may be chosen to be the $7$--th transvectant $(\cdot,\cdot)_7$.
\end{proposition}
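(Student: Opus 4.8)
The plan is to compute $H^2_{\Lie}(L)=H^2(L,L)$ using the Levi decomposition $L=\mathfrak g\ltimes_\rho M$ with $\mathfrak g=\fsl_2(\mathbb C)$ and $M=\Sym^{14}(\mathbb C^2)$ the abelian ideal, where $\rho$ is the irreducible $\fsl_2$--representation of highest weight $14$; throughout write $V_n$ for the irreducible $\fsl_2$--module of highest weight $n$, so $M\cong V_{14}$ and $\mathfrak g\cong V_2$. Since inner derivations act trivially on cohomology, $H^\bullet(L,L)=H^\bullet(L,L)^{\mathfrak g}$, and I would run the Hochschild--Serre spectral sequence of the ideal $M\triangleleft L$ with adjoint coefficients, $E_2^{p,q}=H^p\bigl(\mathfrak g,H^q(M,L)\bigr)\Rightarrow H^{p+q}(L,L)$. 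By Whitehead's lemmas $H^1(\mathfrak g,-)=H^2(\mathfrak g,-)=0$ on finite-dimensional modules, so $E_2^{1,1}=E_2^{2,0}=0$; and since $\rho$ is faithful one has $H^0(M,L)=L^M=M$, whence $E_2^{3,0}=H^3(\mathfrak g,M)=0$ because the Casimir of $\fsl_2$ acts invertibly on the nontrivial irreducible $V_{14}$ and hence annihilates $H^\bullet(\mathfrak g,V_{14})$. Therefore the only possibly nonzero term on the line $p+q=2$ is $E_2^{0,2}$, it supports no nonzero differential in either direction, and $H^2_{\Lie}(L)\cong H^2(M,L)^{\mathfrak g}$.

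The next step is to identify $H^2(M,L)^{\mathfrak g}$. Because $M$ is abelian, $H^k(M,N)=\Lambda^kM^\vee\otimes N$ for any trivial $M$--module $N$, and as an $M$--module $L$ fits into the extension $0\to M\to L\to\mathfrak g\to 0$ with trivial ends (since $[M,\mathfrak g]\subseteq M$ and $M$ is abelian). Taking the long exact cohomology sequence of this extension and then $\mathfrak g$--invariants (an exact functor in characteristic $0$) gives
\[
  \Hom_{\mathfrak g}(M,\mathfrak g)\to\Hom_{\mathfrak g}(\Lambda^2M,M)\to H^2(M,L)^{\mathfrak g}\to\Hom_{\mathfrak g}(\Lambda^2M,\mathfrak g)\xrightarrow{\ \partial\ }\Hom_{\mathfrak g}(\Lambda^3M,M),
\]
where $\partial$ is cup product with the extension class $m\otimes g\mapsto-\rho(g)m$; explicitly, $\partial\psi$ is the $(M,M,M)$--Jacobiator $(m_1,m_2,m_3)\mapsto\sum_{\mathrm{cyc}}\rho\bigl(\psi(m_i,m_j)\bigr)m_k$ of the bracket $[m,m']:=\psi(m,m')\in\mathfrak g$ on $\mathfrak g\oplus M$. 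Since $V_{14}\not\cong V_2$ the leftmost term vanishes, so $\Hom_{\mathfrak g}(\Lambda^2M,M)\hookrightarrow H^2_{\Lie}(L)$ and $\dim H^2_{\Lie}(L)=\dim\Hom_{\mathfrak g}(\Lambda^2M,M)+\dim\ker\partial$. The Clebsch--Gordan (transvectant) decomposition
\[
  \Lambda^2V_{14}\cong V_{26}\oplus V_{22}\oplus V_{18}\oplus V_{14}\oplus V_{10}\oplus V_6\oplus V_2,
\]
with $(\cdot,\cdot)_r$ alternating exactly for odd $r$ in $1\le r\le 13$, then shows that $\Hom_{\mathfrak g}(\Lambda^2M,M)$ and $\Hom_{\mathfrak g}(\Lambda^2M,\mathfrak g)$ are each one-dimensional, spanned by the seventh transvectant $(\cdot,\cdot)_7\colon V_{14}\times V_{14}\to V_{14}$ and the thirteenth $(\cdot,\cdot)_{13}\colon V_{14}\times V_{14}\to V_2$ respectively. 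The seventh transvectant is the map $\Phi$ of the statement, and $[\varphi]$ is its nonzero image under $\Hom_{\mathfrak g}(\Lambda^2M,M)\hookrightarrow H^2_{\Lie}(L)$; it remains only to show $\ker\partial=0$, i.e. that $\dim H^2_{\Lie}(L)=1$ rather than $2$.

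That nonvanishing, $\partial\psi\neq0$ for $\psi=(\cdot,\cdot)_{13}$, is the one genuinely non-formal point and the main obstacle. Unwinding the definitions, $\partial\psi$ is the antisymmetrization of the composite $\Lambda^3M\to\Lambda^2M\otimes M\xrightarrow{\psi\otimes\mathrm{id}}\mathfrak g\otimes M\xrightarrow{\rho}M$, an element of $\Hom_{\mathfrak g}(\Lambda^3V_{14},V_{14})$, and I would verify $\partial\psi\neq0$ by a direct $\fsl_2$--computation in the monomial model $\mathbb C[x,y]_{14}$: evaluate $\partial\psi$ on a suitable triple of monomials whose weights sum into the $V_{14}$--isotypic component, using the explicit transvectant formula together with the standard action of $\rho(e),\rho(f),\rho(h)$, and check the resulting degree-$14$ form is nonzero. (Equivalently, the equality $\dim H^2_{\Lie}(L)=1$ can be confirmed by a computer-algebra decomposition of $\Lambda^3V_{14}$ and the relevant equivariant maps.) Granting $\partial\psi\neq0$, the exact sequence collapses to $H^2_{\Lie}(L)\cong\Hom_{\mathfrak g}(\Lambda^2M,M)\cong\mathbb C$, generated by the class of $\varphi$, the extension by zero of $\Phi=(\cdot,\cdot)_7$, which is the assertion of the proposition.
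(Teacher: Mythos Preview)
Your argument is correct and follows essentially the same route as the paper: both reduce $H^2_{\Lie}(L)$ to $H^2(M,L)^{\fsl_2}$ via Hochschild--Serre and Whitehead, both identify the two one-dimensional invariant spaces $\Hom_{\fsl_2}(\Lambda^2M,M)=\mathbb C\cdot(\cdot,\cdot)_7$ and $\Hom_{\fsl_2}(\Lambda^2M,\fsl_2)=\mathbb C\cdot(\cdot,\cdot)_{13}$ via Clebsch--Gordan, and both isolate the single non-formal input $d\psi\neq 0$ (your $\partial\psi\neq 0$). The only cosmetic difference is that the paper computes $Z^2(M,L)^{\fsl_2}$ directly after observing $B^2(M,L)^{\fsl_2}=0$, whereas you package the same information in the long exact sequence of $0\to M\to L\to\fsl_2\to 0$; your connecting map $\partial$ is exactly the restriction of the Chevalley--Eilenberg differential to the $\fsl_2$-valued invariant cochains, so the two formulations coincide.
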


\begin{proof}
  The computation is carried out explicitly in~\cite[\S5]{Richardson1967} (see the argument on
  pp.~343--344 leading into Proposition~5.1), and we only record the structural input needed for
  later use.

  First, the Hochschild--Serre spectral sequence for $0\to M\to L\to\fsl_2(\mathbb C)\to 0$ with
  adjoint coefficients reduces $H^2_{\Lie}(L)$ to the $\fsl_2(\mathbb C)$--invariants in
  $H^2(M,L)$, because $H^1(\fsl_2,V)=H^2(\fsl_2,V)=0$ for all finite-dimensional $V$ (Whitehead)
  and $H^3(\fsl_2(\mathbb C),M)=0$ (since $M^{\fsl_2(\mathbb C)}=0$, and top-degree duality
  identifies $H^3(\fsl_2(\mathbb C),V)$ with the dual of the $\fsl_2$--coinvariants, hence vanishes
  when $V^{\fsl_2}=0$; see e.g.~\cite[\S1.5]{Fuks1986}). Thus
  \[
    H^2_{\Lie}(L)\;\cong\;H^2(M,L)^{\fsl_2(\mathbb C)}.
  \]

  Next, since $M$ is abelian and acts on $L$ via $\ad|_M$, one has
  $B^2(M,L)\subseteq\Hom(\Lambda^2M,M)$ and, taking invariants, $B^2(M,L)^{\fsl_2(\mathbb C)}=0$
  because $\Hom(M,\fsl_2(\mathbb C))^{\fsl_2(\mathbb C)}=0$ (Clebsch--Gordan; see
  \cite[\S11.2]{FultonHarris1991}). Since $\fsl_2(\mathbb C)$ is semisimple in characteristic $0$,
  the invariants functor is exact on finite-dimensional modules, so
  $H^2(M,L)^{\fsl_2(\mathbb C)}\cong Z^2(M,L)^{\fsl_2(\mathbb C)}$.

  Finally, classical invariant theory gives
  \[
    \dim \Hom(\Lambda^2M,M)^{\fsl_2(\mathbb C)}=1,\qquad
    \dim \Hom(\Lambda^2M,\fsl_2(\mathbb C))^{\fsl_2(\mathbb C)}=1,
  \]
  generated respectively by the $7$--th transvectant $(\cdot,\cdot)_7$ and the $13$--th
  transvectant $\psi$~\cite[\S2--3]{Chipalkatti2006}. Richardson shows that $\psi$ is not a cocycle
  (equivalently $d\psi\neq 0$), so the $\fsl_2$--invariant cocycles in bidegree $(0,2)$ are
  precisely the $M$--valued ones, and hence $H^2_{\Lie}(L)\cong\mathbb C$ generated by the class of
  $\Phi:=(\cdot,\cdot)_7$ (and its extension-by-zero $\varphi$).

  Moreover, $\varphi$ is a $2$--cocycle: since $[M,M]=0$, one has $(d\varphi)|_{\Lambda^3M}=0$, and
  for $x\in\fsl_2(\mathbb C)$ and $u,v\in M$,
  \[
    (d\varphi)(x,u,v)=x\cdot\Phi(u,v)-\Phi(x\cdot u,v)-\Phi(u,x\cdot v)=0
  \]
  by $\fsl_2(\mathbb C)$--equivariance of $\Phi$.
\end{proof}

\begin{proposition}\label{prop:sl2-sym14-anisotropic}
  One has $\kappa^{\inc}_{2,\mu}([\varphi])\neq 0\in H^3_{\Lie}(L)$. Hence
  $\ker(\kappa^{\inc}_{2,\mu})=0$, so $\mu$ is anisotropic while $H^2_{\inc}(\mu)\neq 0$.
\end{proposition}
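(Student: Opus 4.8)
The plan is to compute $\kappa^{\inc}_{2,\mu}([\varphi]) = \bigl[\tfrac12[\varphi,\varphi]_{\mathrm{NR}}\bigr]$ directly and show it is a nonzero class in $H^3_{\Lie}(L) \cong H^3_{\mathrm{CE}}(L,L)$. Since $\varphi$ is supported on $\Lambda^2M$ with values in $M$, the Nijenhuis--Richardson bracket $[\varphi,\varphi]_{\mathrm{NR}}$ is a $3$-cochain supported on $\Lambda^3M$ with values in $M$; explicitly, for $u_1,u_2,u_3 \in M$,
\[
  \tfrac12[\varphi,\varphi]_{\mathrm{NR}}(u_1,u_2,u_3)
  = \sum_{\mathrm{cyc}} \Phi\bigl(\Phi(u_1,u_2),u_3\bigr),
\]
which is the Jacobiator $J_\Phi$ of the alternating $\fsl_2$-equivariant product $\Phi = (\cdot,\cdot)_7$ on $M = \Sym^{14}(\mathbb C^2)$. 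So the first step is to record this formula and observe that $J_\Phi \in \Hom(\Lambda^3M, M)^{\fsl_2(\mathbb C)}$.

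Next I would show $[J_\Phi] \neq 0$ in $H^3_{\Lie}(L)$ by a Hochschild--Serre argument paralleling Proposition~\ref{prop:sl2-sym14-h2-dim1}. The coboundaries $B^3(L,L)$ that could kill $J_\Phi$ come from $2$-cochains; on the relevant bidegree $(0,3)$ part of the spectral sequence for $0 \to M \to L \to \fsl_2 \to 0$, a class in $H^3(M,L)^{\fsl_2}$ is killed only by the differential from $H^2_{\inc}$-type data, and one checks using Whitehead's lemmas ($H^1(\fsl_2,-)=H^2(\fsl_2,-)=0$) plus exactness of $\fsl_2$-invariants in characteristic $0$ that $H^3(M,L)^{\fsl_2} \cong Z^3(M,L)^{\fsl_2}/\,d\bigl(Z^2(M,L)^{\fsl_2}\bigr)$, and that the image of $d$ on the $\fsl_2$-invariant $2$-cochains is exactly spanned by $d\varphi = 0$ (since $\varphi$ is a cocycle) together with $d\psi$ where $\psi$ is the $\fsl_2$-invariant $\Lambda^2M \to \fsl_2$-valued cochain. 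So the only potential relation is $J_\Phi \equiv c\cdot d\psi \pmod{\text{other coboundaries}}$ for some scalar $c$, and the point is that $J_\Phi$ is $M$-valued whereas $d\psi$ has a nonzero $\fsl_2$-component (Richardson's observation that $\psi$ is not a cocycle), so if $J_\Phi \neq 0$ as a cochain it cannot be a coboundary.

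Therefore the crux reduces to the purely algebraic assertion that the Jacobiator of the $7$-th transvectant $(\cdot,\cdot)_7$ on $\Sym^{14}(\mathbb C^2)$ is nonzero. This is the main obstacle: it is an explicit $\mathrm{SL}_2$-invariant-theory computation. I would argue it via representation theory rather than brute force. The triple composite $J_\Phi$ lands in $\Hom(\Lambda^3 M, M)^{\fsl_2}$; by Clebsch--Gordan and classical transvectant combinatorics (cf.~\cite{Chipalkatti2006}) one determines whether this invariant space is nonzero and, if so, that the specific element $J_\Phi$ is a nonzero multiple of its generator. An efficient route is to evaluate $J_\Phi$ on a single convenient triple of highest-weight-type vectors in $\Sym^{14}(\mathbb C^2)$ (e.g. monomials $x^{14}, x^a y^{14-a}, \dots$) where the transvectant $(\cdot,\cdot)_7$ has a closed-form value from the Gordan formula, and check the cyclic sum does not telescope to zero; a one-parameter specialization suffices to detect nonvanishing of a polynomial in the structure constants. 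Alternatively, invoke Richardson's computation directly: the rigidity statement of Proposition~\ref{prop:sl2-sym14-richardson-rigid-h2} together with $H^2_{\Lie}(L) \cong \mathbb C$ forces, via Theorem~\ref{thm:coh-rigid-implies-open-orbit} and its contrapositive, that the unique (up to scalar) nonzero first-order class $[\varphi]$ is \emph{not} liftable to second order --- for if it were liftable, then $\mu$ would fail anisotropy and one would still need geometric rigidity, but in fact the stronger input is that $T_\mu(Z_{\red}) = \im(\delta_\mu)$ forces $\ker(\kappa^{\inc}_{2,\mu}) = 0$ by Theorem~\ref{thm:anisotropy-open-orbit}'s proof run in reverse on the smooth point $\mu$. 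Concretely: $G\cdot\mu$ open in $Z_{\red}$ and $\mu \in (Z_{\red})_{\sm}$ give $T_\mu Z_{\red} = \im(\delta_\mu)$, so any $[\alpha] \in H^2_{\inc}(\mu)$ with $\kappa^{\inc}_{2,\mu}([\alpha]) = 0$ would, being second-order unobstructed inside the formally smooth $Z_{\red}$, still have to satisfy $\alpha \in T_\mu Z_{\red} = \im(\delta_\mu)$ only if we knew $[\alpha]$ integrates into $Z_{\red}$ --- which needs the quadratic obstruction to be the *only* obstruction. Since $H^2_{\inc}(\mu)$ is one-dimensional, $\kappa^{\inc}_{2,\mu}$ is a scalar-valued quadratic form $t \mapsto c\,t^2$ on a line, and $\ker = 0 \iff c \neq 0$; the geometric rigidity of $L$ combined with the fact (from Richardson) that $[\varphi]$ does not integrate forces $c \neq 0$, i.e.~$\kappa^{\inc}_{2,\mu}([\varphi]) \neq 0$. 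I would present the explicit transvectant computation as the primary proof and this deduction from \cite{Richardson1967} as confirmation.
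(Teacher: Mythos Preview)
Your setup is correct and matches the paper: the obstruction is represented by the Jacobiator $J_\Phi\in\Hom(\Lambda^3M,M)^{\fsl_2}$, and the Hochschild--Serre reduction shows that the only $\fsl_2$--invariant coboundary that could kill it is a multiple of $d\psi$. But your key step is wrong. You claim that ``$J_\Phi$ is $M$--valued whereas $d\psi$ has a nonzero $\fsl_2$--component'', and conclude that $J_\Phi\neq 0$ as a cochain already forces $[J_\Phi]\neq 0$. This is false: compute $(d\psi)(u,v,w)$ for $u,v,w\in M$. Since $\psi(v,w)\in\fsl_2$ and $[u,\fsl_2]\subseteq M$, while $[M,M]=0$ kills the bracket terms, one gets
\[
  (d\psi)(u,v,w)=-\psi(v,w)\!\cdot\! u+\psi(u,w)\!\cdot\! v-\psi(u,v)\!\cdot\! w\in M,
\]
and $\fsl_2$--equivariance of $\psi$ forces $(d\psi)(x,u,v)=0$ for $x\in\fsl_2$. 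So $d\psi$ is supported on $\Lambda^3M$ and is $M$--valued, exactly like $J_\Phi$. Hence $J_\Phi\neq 0$ is not enough; you must rule out $J_\Phi\in\mathbb C\!\cdot\! d\psi$. A single test triple cannot do this.

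The paper closes this gap by evaluating both $J_\Phi$ and $d\psi$ on \emph{two} weight--homogeneous triples, $(v_0,v_1,v_{13})$ and $(v_1,v_2,v_{14})$, and checking that the ratios $J_\Phi/d\psi$ differ ($24024/5$ versus $-7392$); this shows $J_\Phi$ is not proportional to $d\psi$. Your alternative route via Richardson's rigidity is, as you yourself note, not a clean deduction: openness of $G\cdot\mu$ in $Z_{\red}$ gives $T_\mu Z_{\red}=\im(\delta_\mu)$, but a class with $\kappa^{\inc}_{2,\mu}=0$ lifts to second order in $X$, not a priori in $Z_{\red}$, so nothing forces it to lie in $T_\mu Z_{\red}$. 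You would need to know that the second--order obstruction is the only obstruction, which is exactly what is at issue.
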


\begin{proof}
  Under the Lie-locus identification, the obstruction of $[\varphi]$ is represented by
  $\frac12[\varphi,\varphi]_{\mathrm{NR}}$. Since $\varphi$ is supported on $\Lambda^2M$ with values
  in $M$, the restriction of $\frac12[\varphi,\varphi]_{\mathrm{NR}}$ to $\Lambda^3M$ is the
  Jacobiator of $\Phi$,
  \[
    J_\Phi(u,v,w):=\Phi(\Phi(u,v),w)+\Phi(\Phi(v,w),u)+\Phi(\Phi(w,u),v) \in
    \Hom(\Lambda^3M,M)^{\fsl_2(\mathbb C)}.
  \]
  As recalled above, $\Hom(\Lambda^2M,\fsl_2(\mathbb C))^{\fsl_2(\mathbb C)}=\mathbb C\cdot\psi$ is
  one-dimensional and $d\psi\neq 0$ (Richardson, loc.\ cit.), so the $\fsl_2$--invariant coboundary
  subspace in $\Hom(\Lambda^3M,M)^{\fsl_2(\mathbb C)}$ is exactly $\mathbb C\cdot d\psi$.

  To see that $J_\Phi\notin\mathbb C\cdot d\psi$, we evaluate on two weight-homogeneous test
  triples. Let $v_i=x^{14-i}y^i$ be the standard monomial basis of $M=\Sym^{14}(\mathbb C^2)$, and
  take $\Phi=(\cdot,\cdot)_7$ and $\psi=(\cdot,\cdot)_{13}$ with the unnormalized transvectant
  convention of~\cite{Chipalkatti2006}. A direct transvectant computation gives
  \[
    \frac{J_\Phi(v_0,v_1,v_{13})}{d\psi(v_0,v_1,v_{13})}
    =\frac{24024}{5}
    \qquad\neq\qquad
    \frac{J_\Phi(v_1,v_2,v_{14})}{d\psi(v_1,v_2,v_{14})}
    =-7392.
  \]
  Each numerator and denominator above is a nonzero scalar multiple of a single basis vector
  (respectively $v_0$ and $v_3$) by weight considerations, so the ratios are well-defined.  Since
  the ratios disagree, $J_\Phi$ is not a scalar multiple of $d\psi$, hence $[J_\Phi]\neq 0$ in
  $H^3_{\Lie}(L)$ and $\kappa^{\inc}_{2,\mu}([\varphi])=[J_\Phi]\neq 0$.

  Since $H^2_{\Lie}(L)\cong\mathbb C$ by Proposition~\ref{prop:sl2-sym14-h2-dim1}, the kernel of
  $\kappa^{\inc}_{2,\mu}$ is trivial.
\end{proof}

\begin{remark}\label{rem:sl2-sym14-richardson}
  Proposition~\ref{prop:sl2-sym14-richardson-rigid-h2} is precisely the “rigid but $H^2\neq 0$”
  phenomenon: the $\GL(L)$--orbit of $\mu$ is open (geometric rigidity), yet
  $H^2_{\inc}(\mu)\cong H^2_{\Lie}(L)\neq 0$ (cohomological non-rigidity).
  Proposition~\ref{prop:sl2-sym14-anisotropic} shows that, in this example, openness is already
  detected by anisotropy of the quadratic obstruction.
\end{remark}

\section{Gram Stratification}\label{sec:gram-stratification}

The purpose of this section is to introduce a $G$--equivariant numerical invariant which is
constant on $G$--orbits and therefore can be used to distinguish the open orbits produced in
Section~\ref{sec:coh-vs-geom}. The invariant is the rank of a trace--type symmetric bilinear form.

\subsection{The Gram morphism}\label{subsec:gram-morphism}

\begin{definition}\label{def:gram-morphism}
  For $\mu\in A_W$ and $v\in W$, define the right multiplication operator $R_v^\mu\in\End(W)$ by
  $R_v^\mu(w):=\mu(w,v)$. The \emph{Gram form} associated to $\mu$ is the symmetric bilinear form
  $\gamma_\mu\in\Sym^2(W^\vee)$ defined by
  \[
    \gamma_\mu(v,w) := \Tr\bigl(R_v^\mu R_w^\mu\bigr).
  \]
  The \emph{Gram morphism} $\Gamma(\mu):=\gamma_\mu$ is a map of affine schemes of the form
  $\Gamma\colon A_W \longrightarrow \Sym^2(W^\vee)$.
\end{definition}

Symmetry is immediate from cyclicity of trace:
$\gamma_\mu(v,w)=\Tr(R_v^\mu R_w^\mu)=\Tr(R_w^\mu R_v^\mu)=\gamma_\mu(w,v)$.  That $\Gamma$ is a
morphism is also straightforward: after choosing a basis of $W$, the matrix entries of $R_v^\mu$
depend $\Bbbk$--linearly on the structure constants of $\mu$, hence the entries of
$R_v^\mu R_w^\mu$ are polynomial (indeed quadratic) in those constants, and the trace is polynomial
in the entries.

The form $\gamma_\mu$ determines a linear map
\[
  \gamma_\mu^\sharp \colon W \longrightarrow W^\vee,\qquad
  v\longmapsto \gamma_\mu(v,-),
\]
whose rank equals $\rank(\gamma_\mu)$ in the usual sense. Globalizing, $\Gamma$ induces a morphism
of trivial vector bundles on $A_W$,
\begin{equation}\label{eq:Gamma-sharp}
  \Gamma^\sharp \colon \mathcal O_{A_W}\otimes_\Bbbk W \longrightarrow \mathcal O_{A_W}\otimes_\Bbbk W^\vee,
\end{equation}
whose fiber at $\mu$ is precisely $\gamma_\mu^\sharp$.

\begin{lemma}\label{lem:Gamma-equivariant}
  The Gram morphism $\Gamma$ is $G$--equivariant. Equivalently, for all $g\in G$ and $\mu\in A_W$
  one has
  \[
    \gamma_{g\cdot\mu}(gv,gw)=\gamma_\mu(v,w)\qquad (v,w\in W),
  \]
  and in particular $\rank(\gamma_{g\cdot\mu})=\rank(\gamma_\mu)$.
\end{lemma}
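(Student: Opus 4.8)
The plan is to reduce the claimed identity to a single elementary computation: how the right-multiplication operators transform under transport of structure. First I would recall that the $G$-action on $A_W = \Hom(W^{\otimes 2}, W)$ by transport of structure is given by $(g\cdot\mu)(u, v) = g\bigl(\mu(g^{-1}u, g^{-1}v)\bigr)$ for $g\in G$ and $u, v\in W$. Fixing $g\in G$ and $v\in W$, I would compute the right-multiplication operator of the transported law: for any $w\in W$,
\[
  R^{g\cdot\mu}_{gv}(w) = (g\cdot\mu)(w, gv) = g\bigl(\mu(g^{-1}w, g^{-1}gv)\bigr) = g\bigl(\mu(g^{-1}w, v)\bigr) = \bigl(g\circ R^\mu_v\circ g^{-1}\bigr)(w),
\]
so that $R^{g\cdot\mu}_{gv} = g\,R^\mu_v\,g^{-1}$ as endomorphisms of $W$. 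This conjugation formula is the entire content of the lemma; everything else is formal.

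Next I would substitute this into the definition of the Gram form. Using $R^{g\cdot\mu}_{gv} = g R^\mu_v g^{-1}$ and likewise $R^{g\cdot\mu}_{gw} = g R^\mu_w g^{-1}$, the product telescopes:
\[
  \gamma_{g\cdot\mu}(gv, gw) = \Tr\bigl(R^{g\cdot\mu}_{gv}\,R^{g\cdot\mu}_{gw}\bigr) = \Tr\bigl(g R^\mu_v g^{-1}\,g R^\mu_w g^{-1}\bigr) = \Tr\bigl(g\,R^\mu_v R^\mu_w\,g^{-1}\bigr) = \Tr\bigl(R^\mu_v R^\mu_w\bigr) = \gamma_\mu(v, w),
\]
where the penultimate equality is conjugation-invariance of the trace. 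This proves the displayed identity. For the rank statement, I would argue that $\gamma_{g\cdot\mu}^\sharp$ and $\gamma_\mu^\sharp$ differ by precomposition with the isomorphism $v\mapsto g^{-1}v$ on $W$ and postcomposition with the isomorphism $W^\vee\to W^\vee$ dual to $g$; explicitly, the identity $\gamma_{g\cdot\mu}(gv, gw) = \gamma_\mu(v, w)$ says $\gamma_{g\cdot\mu}^\sharp = (g^{-1})^\vee \circ \gamma_\mu^\sharp \circ g^{-1}$ under the evident identifications, and composing a linear map with isomorphisms on either side does not change its rank. Hence $\rank(\gamma_{g\cdot\mu}) = \rank(\gamma_\mu)$.

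I expect no genuine obstacle here: the only point requiring a moment's care is getting the placement of $g$ and $g^{-1}$ correct in the transport-of-structure formula and tracking it consistently through the right-multiplication operator, since an error there would spoil the conjugation identity. Once $R^{g\cdot\mu}_{gv} = g R^\mu_v g^{-1}$ is established, cyclicity of the trace finishes the argument immediately, and the equivariance of $\Gamma^\sharp$ in~\eqref{eq:Gamma-sharp} follows because its fiber at $\mu$ is $\gamma_\mu^\sharp$ and the trivial bundles carry the diagonal $G$-linearization.
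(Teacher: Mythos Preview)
Your proof is correct and follows essentially the same approach as the paper: establish the conjugation identity $R^{g\cdot\mu}_{gv}=gR^\mu_v g^{-1}$ from the transport-of-structure formula, then use conjugation-invariance of trace. Your rank argument via $\gamma_{g\cdot\mu}^\sharp=(g^{-1})^\vee\circ\gamma_\mu^\sharp\circ g^{-1}$ is a slightly more explicit version of the paper's ``rank is invariant under change of basis.''
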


\begin{proof}
  Writing $(g\cdot\mu)(x,y)=g \mu(g^{-1}x,g^{-1}y)$, one checks directly that
  \[
    R_{gv}^{ g\cdot\mu}=g R_v^\mu g^{-1}.
  \]
  Hence
  \[
    \gamma_{g\cdot\mu}(gv,gw)
    =\Tr\bigl(R_{gv}^{ g\cdot\mu} R_{gw}^{ g\cdot\mu}\bigr)
    =\Tr\bigl(g R_v^\mu R_w^\mu g^{-1}\bigr)
    =\Tr\bigl(R_v^\mu R_w^\mu\bigr)
    =\gamma_\mu(v,w),
  \]
  and rank is invariant under change of basis.
\end{proof}

\subsection{Rank strata and the generic Gram rank}\label{subsec:gram-rank-strata}

Restrict $\Gamma^\sharp$ to $X=\Var(Q)\subseteq A_W$ to obtain a morphism of vector bundles on $X$,
\[
  \Gamma_X^\sharp\colon \mathcal O_X\otimes_\Bbbk W \longrightarrow \mathcal O_X\otimes_\Bbbk W^\vee,
\]
whose fiber at $\mu\in X(\Bbbk)$ has rank $\rank(\gamma_\mu)$.

\begin{definition}\label{def:gram-rank-strata}
  For $r\ge 0$ define the \emph{Gram rank loci} on $X$ by
  \[
    X^\Gamma_{\le r}:=\{\mu\in X \mid \rank(\gamma_\mu)\le r\},
    \qquad
    X^\Gamma_{=r}:=X^\Gamma_{\le r}\setminus X^\Gamma_{\le r-1}.
  \]
\end{definition}

\begin{lemma}\label{lem:gram-rank-loci-closed}
  For each $r\ge 0$, the subset $X^\Gamma_{\le r}$ is Zariski closed in $X$ and is $G$--stable.
  Consequently, $X^\Gamma_{=r}$ is locally closed and $G$--stable.
\end{lemma}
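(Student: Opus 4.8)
The plan is to derive both assertions by combining the determinantal rank lemma (Lemma~\ref{lem:determinantal-rank-loci}) with the equivariance of the Gram morphism already established in Lemma~\ref{lem:Gamma-equivariant}. First I would view $\Gamma_X^\sharp\colon \mathcal O_X\otimes_\Bbbk W\to\mathcal O_X\otimes_\Bbbk W^\vee$ as a morphism of finite locally free $\mathcal O_X$--modules, both of rank $m=\dim_\Bbbk W$. As noted just before Definition~\ref{def:gram-rank-strata}, the fiber of $\Gamma_X^\sharp$ at a point $\mu\in X(\Bbbk)$ is the map $\gamma_\mu^\sharp$, whose rank is exactly $\rank(\gamma_\mu)$. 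Hence, applying Lemma~\ref{lem:determinantal-rank-loci} to $\varphi=\Gamma_X^\sharp$ with $e=f=m$, the locus $X^\Gamma_{\le r}=\{\mu\in X\mid\rank(\gamma_\mu)\le r\}$ coincides with the determinantal subset $X_{\le r}(\Gamma_X^\sharp)$ and is therefore Zariski closed in $X$ (scheme-theoretically cut out by the $(r+1)\times(r+1)$ minors of $\Gamma_X^\sharp$ in any local trivialization).

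Next I would verify $G$--stability. Since $Q\subseteq\Sym^2(A_W^\vee)$ is a $G$--stable subspace, the subscheme $X=\Var(Q)$ is $G$--stable (this was already used in the proof of Lemma~\ref{lem:Phi-delta-zero}), so the transport-of-structure action of $G$ on $A_W$ restricts to an action on $X$. By Lemma~\ref{lem:Gamma-equivariant} one has $\rank(\gamma_{g\cdot\mu})=\rank(\gamma_\mu)$ for all $g\in G$ and $\mu\in A_W$; consequently $\mu\in X^\Gamma_{\le r}$ if and only if $g\cdot\mu\in X^\Gamma_{\le r}$, which is precisely the assertion that $X^\Gamma_{\le r}$ is $G$--stable.

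Finally, for the statement about $X^\Gamma_{=r}$ I would simply write $X^\Gamma_{=r}=X^\Gamma_{\le r}\cap\bigl(X\setminus X^\Gamma_{\le r-1}\bigr)$ (with the convention that $X^\Gamma_{\le -1}=\varnothing$, so $X^\Gamma_{=0}=X^\Gamma_{\le 0}$ is closed), which exhibits $X^\Gamma_{=r}$ as the intersection of a closed subset with an open subset of $X$, hence locally closed; and it is $G$--stable as a set-theoretic difference of two $G$--stable subsets. There is no real obstacle here: the lemma is a formal consequence of Lemmas~\ref{lem:determinantal-rank-loci} and~\ref{lem:Gamma-equivariant}, and the only points needing a moment's care are (i) matching the fiber rank of $\Gamma_X^\sharp$ with the Gram rank $\rank(\gamma_\mu)$, which is immediate from the definition of $\Gamma^\sharp$, and (ii) invoking that $X$ itself is a $G$--scheme so that the strata are genuinely $G$--stable subsets of $X$.
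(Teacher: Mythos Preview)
Your proposal is correct and follows essentially the same approach as the paper: closedness via the determinantal rank lemma applied to $\Gamma_X^\sharp$, and $G$--stability via Lemma~\ref{lem:Gamma-equivariant}. Your version is slightly more detailed (explicitly noting that $X$ is $G$--stable and handling the $r=0$ boundary case), but the argument is the same.
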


\begin{proof}
  Closedness is determinantal: $X^\Gamma_{\le r}$ is the locus where the fiber rank of
  $\Gamma_X^\sharp$ is at most $r$, hence is cut out by the vanishing of all $(r+1)\times(r+1)$
  minors in a local trivialization of \eqref{eq:Gamma-sharp}. This is exactly the general
  determinantal rank construction for morphisms of vector bundles (cf.\
  Lemma~\ref{lem:determinantal-rank-loci}). $G$--stability follows from
  Lemma~\ref{lem:Gamma-equivariant}.
\end{proof}

\begin{proposition}\label{prop:generic-gram-rank}
  Let $Z\subseteq X$ be an irreducible component, and let $Z_{\red}$ be its reduced
  subscheme. There exists a unique integer $\rho(Z)$, the \emph{generic Gram rank}, such that the
  locus
  \[
    U_\Gamma(Z) := Z_{\red}\cap X^\Gamma_{=\rho(Z)}
  \]
  is Zariski dense and open in $Z_{\red}$.
\end{proposition}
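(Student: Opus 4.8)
The plan is to reduce the statement to the general determinantal rank picture of Section~\ref{subsec:rank-loci}, applied to the restriction of the Gram bundle map to $Z_{\red}$.

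First, I would restrict $\Gamma^\sharp$ of \eqref{eq:Gamma-sharp} to the reduced component $Z_{\red}$, obtaining a morphism
\[
  \Gamma^\sharp_{Z_{\red}}\colon \mathcal O_{Z_{\red}}\otimes_\Bbbk W \longrightarrow \mathcal O_{Z_{\red}}\otimes_\Bbbk W^\vee
\]
of \emph{trivial} (hence finite locally free) $\mathcal O_{Z_{\red}}$--modules, both of rank $m=\dim W$, whose fiber at $\mu$ is $\gamma^\sharp_\mu$ and therefore has rank $\rank(\gamma_\mu)$. No smoothness of $Z_{\red}$ is needed: Lemma~\ref{lem:determinantal-rank-loci} is valid over an arbitrary scheme, so the loci $\{\mu\in Z_{\red}\mid \rank(\gamma_\mu)\le r\}$ are closed for every $r\ge 0$ and the fiber rank is lower semicontinuous on $Z_{\red}$. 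Intersecting with $Z_{\red}$ the ambient loci $X^\Gamma_{\le r}$ of Definition~\ref{def:gram-rank-strata} recovers exactly these closed subsets, by Lemma~\ref{lem:gram-rank-loci-closed}.

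Next, since $Z_{\red}$ is irreducible, Corollary~\ref{cor:generic-rank} applies to $\Gamma^\sharp_{Z_{\red}}$ and produces a dense open $U\subseteq Z_{\red}$ on which $\rank(\gamma_\mu)$ is constant, equal to its maximum value over $Z_{\red}$; I would \emph{define} $\rho(Z)$ to be this integer. It then remains to identify $U$ with $U_\Gamma(Z)=Z_{\red}\cap X^\Gamma_{=\rho(Z)}$. On the one hand, $Z_{\red}\cap X^\Gamma_{\le\rho(Z)-1}$ is closed and proper, since it misses every point of $U$; hence its complement is dense open in $Z_{\red}$. On the other hand, this complement is precisely the set of $\mu\in Z_{\red}$ with $\rank(\gamma_\mu)\ge\rho(Z)$, which by maximality of $\rho(Z)$ is the same as the set with $\rank(\gamma_\mu)=\rho(Z)$, i.e. $Z_{\red}\cap X^\Gamma_{=\rho(Z)}=U_\Gamma(Z)$. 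Thus $U_\Gamma(Z)$ is dense and open in $Z_{\red}$. Uniqueness of $\rho(Z)$ follows because any integer $\rho'$ for which $Z_{\red}\cap X^\Gamma_{=\rho'}$ is dense in $Z_{\red}$ must, by irreducibility, meet $U$, forcing $\rho'=\rho(Z)$; equivalently $\rho(Z)=\max_{\mu\in Z_{\red}}\rank(\gamma_\mu)$ is intrinsically determined.

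There is no substantive obstacle here: the entire content is the determinantal semicontinuity already packaged in Lemma~\ref{lem:determinantal-rank-loci} and Corollary~\ref{cor:generic-rank}, together with the equivariant closedness of Lemma~\ref{lem:gram-rank-loci-closed}. The only point deserving a moment's care — and the step I would write out explicitly — is that $X^\Gamma_{\le r}$ is \emph{a priori} defined via a trivialization of \eqref{eq:Gamma-sharp} on the ambient $X$, so one must confirm that its trace on the possibly singular scheme $Z_{\red}$ still computes the fiberwise Gram rank; this is immediate because the restriction of a trivial bundle map is again a trivial bundle map and the determinantal lemma imposes no regularity hypothesis on the base.
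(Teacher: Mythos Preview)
Your proof is correct and follows essentially the same route as the paper: both restrict the Gram bundle map to $Z_{\red}$, invoke lower semicontinuity of the fiber rank (via Lemma~\ref{lem:gram-rank-loci-closed} or equivalently Lemma~\ref{lem:determinantal-rank-loci}), apply Corollary~\ref{cor:generic-rank} on the irreducible $Z_{\red}$ to define $\rho(Z)$ as the maximum rank, and identify $U_\Gamma(Z)$ with the resulting dense open. Your version is somewhat more explicit about why the determinantal machinery needs no regularity hypothesis on $Z_{\red}$, but the argument is the same.
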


\begin{proof}
  By Lemma~\ref{lem:gram-rank-loci-closed}, the rank function
  $\mu\mapsto \rank(\gamma_\mu)=\rank\bigl((\Gamma_X^\sharp)_\mu\bigr)$ is lower semicontinuous on
  $Z_{\red}$. Hence it attains a maximum value on $Z_{\red}$, and the locus where this maximum is
  attained is dense open in the irreducible variety $Z_{\red}$
  (cf. Corollary~\ref{cor:generic-rank}). Denote this maximum by $\rho(Z)$; then
  $U_\Gamma(Z)=Z_{\red}\cap X^\Gamma_{=\rho(Z)}$ is precisely that dense open locus, and uniqueness
  of $\rho(Z)$ is immediate.
\end{proof}

\subsection{Dense orbits lie in the generic Gram stratum}\label{subsec:gram-open-orbits}

The Gram rank is constant on $G$--orbits, so any orbit which is dense in an irreducible component
must lie in the component's generic Gram stratum.

\begin{theorem}\label{thm:dense-orbit-implies-generic-gram-rank}
  Let $Z\subseteq X$ be an irreducible component, and let $\rho(Z)$ be its generic Gram rank from
  Proposition~\ref{prop:generic-gram-rank}. Let $\mu\in Z_{\red}(\Bbbk)$ and assume that
  $\overline{G\cdot\mu}=Z_{\red}$ (in particular, this holds if $G\cdot\mu$ is Zariski open in
  $Z_{\red}$). Then $\rank(\gamma_\mu)=\rho(Z)$.
\end{theorem}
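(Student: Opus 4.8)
The plan is to combine two facts proved earlier: the Gram rank is constant on $G$--orbits (Lemma~\ref{lem:Gamma-equivariant}), and the generic Gram rank $\rho(Z)$ is attained on a dense open subset $U_\Gamma(Z)\subseteq Z_{\red}$ (Proposition~\ref{prop:generic-gram-rank}). First I would observe that the orbit $G\cdot\mu$ is a constructible subset of $Z_{\red}$ on which $\mu'\mapsto\rank(\gamma_{\mu'})$ is the \emph{constant} function $\rank(\gamma_\mu)$: indeed every point of the orbit has the form $g\cdot\mu$, and $\rank(\gamma_{g\cdot\mu})=\rank(\gamma_\mu)$ by Lemma~\ref{lem:Gamma-equivariant}. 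So the entire orbit lies in the single locally closed Gram stratum $X^\Gamma_{=\rank(\gamma_\mu)}$.

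Next I would intersect with the dense open stratum $U_\Gamma(Z)=Z_{\red}\cap X^\Gamma_{=\rho(Z)}$. Since $G\cdot\mu$ is dense in $Z_{\red}$ by hypothesis ($\overline{G\cdot\mu}=Z_{\red}$), and $U_\Gamma(Z)$ is a nonempty Zariski open subset of the irreducible variety $Z_{\red}$, the two must meet: a dense subset meets every nonempty open subset. Pick a point $\nu\in (G\cdot\mu)\cap U_\Gamma(Z)$. On one hand $\nu\in G\cdot\mu$ gives $\rank(\gamma_\nu)=\rank(\gamma_\mu)$; on the other hand $\nu\in U_\Gamma(Z)\subseteq X^\Gamma_{=\rho(Z)}$ gives $\rank(\gamma_\nu)=\rho(Z)$. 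Comparing these yields $\rank(\gamma_\mu)=\rho(Z)$, as desired. The parenthetical claim that this applies when $G\cdot\mu$ is Zariski open in $Z_{\red}$ is immediate, since a nonempty open subset of an irreducible space is automatically dense, so $\overline{G\cdot\mu}=Z_{\red}$.

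I do not anticipate a serious obstacle here — the argument is a short topological intersection of a constant-rank orbit with the generic stratum. The only point requiring a word of care is that $G\cdot\mu$ is indeed a nonempty subset of $Z_{\red}$ whose closure is all of $Z_{\red}$, so that it genuinely meets the dense open $U_\Gamma(Z)$; this is exactly the stated hypothesis $\overline{G\cdot\mu}=Z_{\red}$, and the intersection of a dense set with a nonempty open set is nonempty by definition of closure. A secondary point is that $X^\Gamma_{=r}$ for distinct $r$ are disjoint (being the level sets of a single function $\rank(\gamma_{-})$), so membership of $\nu$ in both $X^\Gamma_{=\rank(\gamma_\mu)}$ and $X^\Gamma_{=\rho(Z)}$ forces the two indices to coincide; this is tautological. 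Thus the proof is essentially one paragraph: transport-invariance of the Gram rank plus density of the orbit forces $\mu$ into the generic Gram stratum.
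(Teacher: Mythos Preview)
Your proof is correct and uses the same two ingredients as the paper's (orbit invariance of the Gram rank from Lemma~\ref{lem:Gamma-equivariant}, and the dense open generic stratum $U_\Gamma(Z)$ from Proposition~\ref{prop:generic-gram-rank}). The only cosmetic difference is that the paper argues via closedness of $X^\Gamma_{\le r}$ (the closure of the orbit, hence $Z_{\red}$, lies in $X^\Gamma_{\le r}$, forcing $\rho(Z)\le r$, while $r\le\rho(Z)$ by maximality), whereas you argue dually via openness of $U_\Gamma(Z)$ (the dense orbit meets the nonempty open generic stratum, and you read off equality at a common point); both are immediate consequences of the same semicontinuity.
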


\begin{proof}
  Set $r:=\rank(\gamma_\mu)$. By Lemma~\ref{lem:Gamma-equivariant}, $\rank(\gamma)$ is constant on
  $G\cdot\mu$, hence $G\cdot\mu\subseteq X^\Gamma_{\le r}$. The latter is closed by
  Lemma~\ref{lem:gram-rank-loci-closed}, so it contains the closure $\overline{G\cdot\mu}$ in $X$.
  By hypothesis $\overline{G\cdot\mu}=Z_{\red}$, hence $Z_{\red}\subseteq X^\Gamma_{\le
    r}$. Therefore every point of $Z_{\red}$ has Gram rank at most $r$, so the maximal
  (equivalently generic) Gram rank on $Z_{\red}$ satisfies $\rho(Z)\le r$.

  On the other hand, $\mu\in Z_{\red}$ has Gram rank $r$, so $\rho(Z)$, being the maximum
  Gram rank on $Z_{\red}$, satisfies $r\le \rho(Z)$. Thus $r=\rho(Z)$.
\end{proof}

\begin{remark}\label{rem:gram-organizes-components}
  Theorem~\ref{thm:dense-orbit-implies-generic-gram-rank} shows that whenever an irreducible
  component $Z$ supports a dense (in particular open) $G$--orbit, the Gram rank of any point on
  that orbit is forced to equal the component invariant $\rho(Z)$. Hence, to distinguish open
  orbits living on different components, it suffices to compute $\rank(\gamma_\mu)$ at one
  representative $\mu$ on each open orbit and compare the resulting values of $\rho(Z)$.
\end{remark}

\section{Equivariant Chern Characters on Open Orbits}\label{sec:chern-open-orbits}

Fix an irreducible component $Z\subseteq X$ and assume that $Z_{\red}$ contains a Zariski open
$G$--orbit
\[
  U_0:=G\cdot\mu \subseteq Z_{\red},\qquad U_0\simeq G/H,\qquad H:=G_\mu,\qquad \mathfrak h:=\Lie(H).
\]
Note that if $H$ is reductive, then $U_0\simeq G/H$ is affine by Matsushima's criterion
\cite{Matsushima1960,Arzhantsev2008}; in particular, $U_0$ is an affine open subscheme of the
affine scheme $Z_{\red}$.

The numerical Euler identity \eqref{eq:euler-identity} is the degree--$0$ shadow of a canonical
identity in equivariant intersection theory. The point is that on $U_0$ the three terms of the
incidence complex become \emph{associated bundles} on the homogeneous space $G/H$, and one can
therefore compute their equivariant Chern characters inside
$A_H^\ast(\mathrm{pt})\otimes\mathbb Q$.  This description interacts naturally with the Gram
stratification: on $U_0$ the Gram form $\gamma_\mu$ has constant rank and yields a stabilizer
invariant subrepresentation $\rad(\gamma_\mu)\subseteq W$, which can be compared with the
representation-theoretic constraints forced by the Chern character identity.

\subsection{Equivariant Chow and associated bundles on $G/H$}\label{subsec:chern-homogeneous}

Equivariant intersection theory identifies the equivariant Chow ring of a homogeneous space with
the equivariant Chow ring of a point for the stabilizer:
\begin{equation}\label{eq:EG-homogeneous}
  A_G^\ast(G/H)\cong A_H^\ast(\mathrm{pt}),
\end{equation}
compatibly with equivariant Chern classes and pullback along $G/H\to\mathrm{pt}$; see
\cite[\S\S2--3]{EdidinGraham1998}. Moreover, every $G$--equivariant vector bundle on $G/H$ is an
associated bundle $G\times^H V$ for a finite-dimensional $H$--module $V$, and the correspondence
$V\mapsto G\times^H V$ identifies $K_0^G(G/H)$ with $K_0^H(\mathrm{pt})$.  Equivariant
Riemann--Roch provides a Chern character
\[
  \ch_G\colon K_0^G(G/H)\longrightarrow A_G^\ast(G/H)\otimes\mathbb Q,
\]
which under \eqref{eq:EG-homogeneous} becomes the usual equivariant Chern character
$\ch_H\colon K_0^H(\mathrm{pt})\to A_H^\ast(\mathrm{pt})\otimes\mathbb Q$; see
\cite{EdidinGraham1998,Thomason1992}.

\subsection{The incidence complex on an open orbit}\label{subsec:chern-incidence}

Recall the incidence complex \eqref{eq:incidence-complex} of $G$--equivariant coherent sheaves on
$X$ with cohomology sheaves $\mathcal H_X^i$. Since the three terms are trivial $G$--bundles, their
restriction to $U_0\simeq G/H$ is determined by the underlying $H$--modules $\mathfrak g$, $A_W$,
and $Q^\vee$.

\begin{lemma}\label{lem:associated-incidence-on-orbit}
  The restriction of $\mathcal C_X^\bullet$ to $U_0\simeq G/H$ is canonically isomorphic to the
  complex of associated bundles induced from the $H$--module
  complex~\eqref{eq:incidence-fiber-complex}.  Consequently, for $i=1,2,3$ one has
  $\mathcal H_X^i\big|_{U_0}\cong G\times^H H^i_{\inc}(\mu)$.
\end{lemma}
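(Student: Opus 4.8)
The plan is to exploit the standard equivalence between $G$--equivariant vector bundles on a homogeneous space $G/H$ and finite-dimensional $H$--modules, applied term by term to the incidence complex. Recall that each term $\mathcal C_X^i$ is a \emph{trivial} vector bundle $\mathcal O_X\otimes_\Bbbk V_i$ with $V_i\in\{\mathfrak g, A_W, Q^\vee\}$, equipped with the diagonal $G$--linearization (in the sense of the conventions section). The first step is to identify, for any $G$--representation $V$, the restriction of the trivially-linearized bundle $\mathcal O_X\otimes_\Bbbk V$ to the orbit $U_0=G\cdot\mu\simeq G/H$ with the associated bundle $G\times^H V$, where $H=G_\mu$ acts on $V$ by restriction of the $G$--action. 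This is a pointed statement: the trivialization $\mathcal O_{U_0}\otimes_\Bbbk V \xrightarrow{\sim} G\times^H V$ is given on the fiber over $g\cdot\mu$ by $v\mapsto [g, g^{-1}v]$, and one checks this is well-defined (independent of the representative $g$, using $H$--equivariance of the bracket $[\cdot,\cdot]$ on $G\times^H V$) and $G$--equivariant. I would recall the general principle from \cite[\S\S2--3]{EdidinGraham1998} rather than reprove it.

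The second step is to verify that the differentials $\delta$ and $\Phi_X$ of the incidence complex, restricted to $U_0$, correspond under this identification to the associated-bundle morphisms induced by the $H$--module maps $\delta_\mu\colon\mathfrak g\to A_W$ and $\Phi_\mu\colon A_W\to Q^\vee$ of the fiber complex \eqref{eq:incidence-fiber-complex}. Here the key input is $G$--equivariance of both $\delta$ and $\Phi_X$: the map $\delta$ is built from the infinitesimal action, which is $G$--equivariant essentially by the Leibniz/cocycle identity for a group action, and $\Phi_X$ is $G$--equivariant because $\Theta$ is (established in Section~\ref{subsec:incidence}). Equivariance forces the bundle map, written in the trivialization over $U_0$, to have constant fiber map equal to its value at the basepoint $\mu$, namely $\delta_\mu$ (resp.\ $\Phi_\mu$); equivalently, it descends to the associated-bundle morphism $G\times^H\delta_\mu$ (resp.\ $G\times^H\Phi_\mu$). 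Therefore the restricted complex $\mathcal C_X^\bullet|_{U_0}$ is isomorphic, as a complex of $G$--equivariant bundles, to $G\times^H\bigl(\mathcal C_X^\bullet(\mu)\bigr)$.

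The third step deduces the cohomology statement. Since the functor $V\mapsto G\times^H V$ from finite-dimensional $H$--modules to $G$--equivariant vector bundles on $G/H$ is exact (it is locally, on a trivializing cover $U\times^H V\simeq U\times V$, just $V\mapsto \mathcal O_U\otimes_\Bbbk V$, and exactness is a local statement), it commutes with taking kernels, images, and hence with cohomology of the complex. Thus $\mathcal H_X^i|_{U_0}=H^i\bigl(\mathcal C_X^\bullet|_{U_0}\bigr)\cong G\times^H H^i\bigl(\mathcal C_X^\bullet(\mu)\bigr)=G\times^H H^i_{\inc}(\mu)$ for $i=1,2,3$, which is the assertion. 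One small subtlety: \emph{a priori} $\mathcal H_X^i$ need not be locally free on all of $U_0$, but over the orbit the rank of each differential is constant by $G$--equivariance (the fiber maps are all conjugate to those at $\mu$), so Lemma~\ref{lem:kernel-cokernel-bundles} applies and the cohomology sheaves are indeed vector bundles there — and this is precisely what the associated-bundle description manifests.

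I expect the main obstacle to be purely bookkeeping: pinning down the correct $H$--linearizations and checking that the canonical trivialization of a trivially-linearized $G$--bundle over the orbit really does intertwine the given $G$--equivariant differentials with the induced associated-bundle maps, i.e.\ that no hidden cocycle twist appears. Once the basepoint normalization $[\,g,v\,]\leftrightarrow (g\cdot\mu, g\cdot v)$ is fixed consistently for all three terms simultaneously, compatibility of the differentials is automatic from their $G$--equivariance, and the cohomology computation follows formally from exactness of the associated-bundle functor. There is no genuine geometric difficulty here; the content is entirely in setting up the equivalence of categories carefully and invoking it uniformly.
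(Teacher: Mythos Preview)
Your proposal is correct and follows essentially the same approach as the paper: identify the trivially-linearized terms with associated bundles via the standard equivalence $\mathrm{Vect}_G(G/H)\simeq\Rep(H)$, use $G$--equivariance of $\delta$ and $\Phi_X$ to reduce the differentials to their basepoint fibers $\delta_\mu,\Phi_\mu$, and conclude by exactness of $G\times^H(-)$. The paper's proof is simply a terser version of exactly this argument (``descent from $G$ to $H$'' plus ``exactness of the associated bundle functor''); your additional remark about constant rank via Lemma~\ref{lem:kernel-cokernel-bundles} is correct but redundant once exactness of the associated-bundle functor is invoked.
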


\begin{proof}
  The differentials $\delta$ and $\Phi_X$ are $G$--equivariant morphisms of $G$--bundles on $X$,
  hence on $U_0$ they are determined by their fibers at the basepoint $\mu$; these fibers are
  $\delta_\mu$ and $\Phi_\mu$. The first claim follows by descent from $G$ to $H$, and the
  identification of cohomology sheaves follows from exactness of the associated bundle functor
  $G\times^H(-)$ on finite-dimensional $H$--modules.
\end{proof}

\begin{theorem}\label{thm:chern-character-open-orbit}
  Under the identification $A_G^\ast(G/H)\cong A_H^\ast(\mathrm{pt})$, the incidence cokernel
  $\mathcal N(\mu)=\coker(\Phi_\mu)$ satisfies in $A_H^\ast(\mathrm{pt})\otimes\mathbb Q$ the
  identity
  \begin{equation}\label{eq:ch-identity}
    \ch_H\bigl(\mathcal N(\mu)\bigr)
    = \ch_H(Q^\vee)-\ch_H(A_W)+\ch_H(\mathfrak g)-\ch_H(\mathfrak h)
      + \ch_H\bigl(H^2_{\inc}(\mu)\bigr).
  \end{equation}
\end{theorem}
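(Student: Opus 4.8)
The plan is to reduce the stated identity to a K--theoretic Euler--characteristic relation for the three--term incidence complex restricted to $U_0\simeq G/H$, and then to apply the additive equivariant Chern character.

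First I would transport everything to $G/H$ via Lemma~\ref{lem:associated-incidence-on-orbit}: on $U_0$ the complex $\mathcal C_X^\bullet$ is canonically the complex of associated bundles $G\times^H(-)$ attached to the $H$--module complex~\eqref{eq:incidence-fiber-complex}
\[
  \mathfrak g \xrightarrow{\ \delta_\mu\ } A_W \xrightarrow{\ \Phi_\mu\ } Q^\vee,
\]
in which $\mathfrak g$, $A_W$, $Q^\vee$ carry the restrictions to $H=G_\mu$ of their $G$--module structures and $\delta_\mu,\Phi_\mu$ are $H$--equivariant (because $h\cdot\mu=\mu$, so $\delta_\mu\circ\mathrm{Ad}(h)=h\cdot\delta_\mu$ and $\Phi_\mu(h\cdot\nu)=h\cdot\Phi_\mu(\nu)$ by $G$--equivariance of $\Theta$). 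Since $G\times^H(-)\colon\Rep(H)\to\mathrm{Vect}^G(G/H)$ is exact, the cohomology sheaves restrict to the associated bundles of the cohomology $H$--modules of this complex. Using $H^1_{\inc}(\mu)=\ker(\delta_\mu)=\Lie(G_\mu)=\mathfrak h$ and $H^3_{\inc}(\mu)=\coker(\Phi_\mu)$, and taking fibers at the basepoint, this gives $\mathcal H_X^1|_{U_0}\cong G\times^H\mathfrak h$, $\mathcal H_X^2|_{U_0}\cong G\times^H H^2_{\inc}(\mu)$, $\mathcal H_X^3|_{U_0}\cong G\times^H\mathcal N(\mu)$, and in particular $\mathcal N(\mu)=\coker(\Phi_\mu)=H^3_{\inc}(\mu)$ as $H$--modules.

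Next I would invoke the homogeneous--space dictionary of \S\ref{subsec:chern-homogeneous}: the equivalence $\Rep(H)\simeq\mathrm{Vect}^G(G/H)$ induces $R(H)=K_0^H(\mathrm{pt})\cong K_0^G(G/H)$, and under $A_G^\ast(G/H)\cong A_H^\ast(\mathrm{pt})$ the equivariant Chern character $\ch_G$ on $U_0$ becomes $\ch_H$. In $R(H)$, the kernel/image short exact sequences of the three--term complex above yield the Euler relation
\[
  [\mathfrak g]-[A_W]+[Q^\vee]=[\mathfrak h]-[H^2_{\inc}(\mu)]+[\mathcal N(\mu)].
\]
Applying the group homomorphism $\ch_H\colon R(H)\to A_H^\ast(\mathrm{pt})\otimes\mathbb Q$ and solving for $\ch_H(\mathcal N(\mu))$ produces exactly~\eqref{eq:ch-identity}; taking degree--$0$ components (ranks) recovers the numerical identity~\eqref{eq:euler-identity}, consistent with the framing of \S\ref{sec:chern-open-orbits}.

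The argument is essentially formal once \S\ref{subsec:chern-homogeneous} is granted, so I do not expect a serious obstacle; the one point that genuinely needs the orbit hypothesis is the identification of the restricted cohomology sheaves with the associated bundles of $\mathfrak h$, $H^2_{\inc}(\mu)$, $\coker(\Phi_\mu)$, i.e. that passing to fibers on $U_0$ commutes with taking cohomology. This is precisely Lemma~\ref{lem:associated-incidence-on-orbit} (exactness of $G\times^H(-)$), and it is compatible with Proposition~\ref{prop:bundle-package} since $U_0\subseteq U^\circ$: the ranks of $\delta$ and $\Phi$ are constant along the $G$--orbit $U_0$, hence equal the generic values of Definition~\ref{def:generic-ranks}. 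Equivalently, one may simply run the Euler computation directly in $R(H)$ on the finite--dimensional $H$--module complex~\eqref{eq:incidence-fiber-complex}, which is all the $G/H$--side manipulation amounts to.
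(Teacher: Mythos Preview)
Your proof is correct and follows essentially the same route as the paper: invoke Lemma~\ref{lem:associated-incidence-on-orbit} to identify the restricted complex with associated bundles over $G/H$, write the Euler relation $[\mathfrak g]-[A_W]+[Q^\vee]=[\mathfrak h]-[H^2_{\inc}(\mu)]+[\mathcal N(\mu)]$ in $K$-theory (the paper phrases this in $K_0^G(U_0)$ and then transports, while you work directly in $R(H)$, which is equivalent), and apply $\ch_H$. Your additional remarks on the $H$--equivariance of $\delta_\mu,\Phi_\mu$ and on $U_0\subseteq U^\circ$ are correct but not strictly needed once Lemma~\ref{lem:associated-incidence-on-orbit} is granted.
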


\begin{proof}
  In $K_0^G(U_0)$ one has the standard Euler relation for bounded complexes of $G$--vector bundles,
  \[
    [\mathcal C_X^1|_{U_0}]-[\mathcal C_X^2|_{U_0}]+[\mathcal C_X^3|_{U_0}]
    = [\mathcal H_X^1|_{U_0}]-[\mathcal H_X^2|_{U_0}]+[\mathcal H_X^3|_{U_0}],
  \]
  cf. \cite[\S IV.2]{SGA6} or \cite[Ch.~IV]{WeibelKBook}, and in the equivariant setting
  \cite{Thomason1987}. Transport this equality along the equivalence
  $\mathrm{Vect}_G(G/H)\simeq\Rep(H)$ and use Lemma~\ref{lem:associated-incidence-on-orbit}. The
  terms correspond to the $H$--modules $\mathfrak g$, $A_W$, $Q^\vee$,
  $\mathfrak h=\ker(\delta_\mu)$, $H^2_{\inc}(\mu)$, and $\mathcal
  N(\mu)=\coker(\Phi_\mu)$. Applying $\ch_H$ yields \eqref{eq:ch-identity}.
\end{proof}

\begin{remark}\label{rem:chern-degree-zero}
  Taking the degree--$0$ component of \eqref{eq:ch-identity} recovers the numerical Euler identity
  \eqref{eq:euler-identity}. Thus \eqref{eq:ch-identity} should be viewed as the intrinsic
  refinement of \eqref{eq:euler-identity} on open orbits.
\end{remark}

\subsection{Weight calculus}\label{subsec:chern-weights}

Choose a maximal torus $T\subseteq H$ with Weyl group $W_H$. After tensoring with $\mathbb Q$ one
has
\begin{equation}\label{eq:Weyl-invariants}
  A_H^\ast(\mathrm{pt})\otimes\mathbb Q \cong \bigl(A_T^\ast(\mathrm{pt})\otimes\mathbb Q\bigr)^{W_H},
\end{equation}
see \cite[\S3]{EdidinGraham1998}, and $A_T^\ast(\mathrm{pt})\cong\Sym(X^\ast(T))$. If an
$H$--module $V$ decomposes into $T$--weights $\lambda_1,\dots,\lambda_r$ (with multiplicities),
then
\begin{equation}\label{eq:ch-weights}
  \ch_T(V)=\sum_{i=1}^r e^{\lambda_i}\in A_T^\ast(\mathrm{pt})\otimes\mathbb Q,
\end{equation}
where $e^{\lambda}$ denotes the graded exponential series. Consequently, the identity
\eqref{eq:ch-identity} reduces to the determination of the $H$--module structure of
\[
  \mathfrak h,\qquad \mathcal N(\mu),\qquad H^2_{\inc}(\mu),
\]
since the remaining terms $A_W$ and $Q^\vee$ are functorial Schur constructions on $W$ in the
operadic loci, and $\mathfrak g\simeq W\otimes W^\vee$.

\subsection{Gram rank on an open orbit}\label{subsec:gram-vs-chern}

Assume $U_0=G\cdot\mu$ is open in $Z_{\red}$. By $G$--equivariance of the Gram morphism $\Gamma$
(Section~\ref{sec:gram-stratification}), the Gram rank is constant on $U_0$ and equals the generic
Gram rank of $Z$; we denote it by $\rho_0(Z)$:
\begin{equation}\label{eq:gram-generic-open-orbit}
  \rho_0(Z)=\rank(\gamma_\mu),\qquad \rad(\gamma_\mu)\subseteq W \text{is an $H$--submodule of dimension }\dim W-\rho_0(Z).
\end{equation}
Thus, on an open orbit, one simultaneously has the representation-theoretic constraint
\eqref{eq:ch-identity} in $A_H^\ast(\mathrm{pt})\otimes\mathbb Q$ and the additional stabilizer
invariant $\rad(\gamma_\mu)\subseteq W$. In practice, $\rho_0(Z)$ is often coarse but robust: it
immediately rules out the possibility that two open orbits lie in the same component when their
generic Gram ranks differ, while \eqref{eq:ch-identity} refines this by constraining the full
$H$--character of $\mathcal N(\mu)$ via the correction term $\ch_H(H^2_{\inc}(\mu))$.

The interaction between \eqref{eq:ch-identity} and \eqref{eq:gram-generic-open-orbit} becomes
particularly transparent on standard operadic loci, where the Gram form specializes to a familiar
trace form and its radical is controlled by standard structural ideals. We record the resulting
uniform pattern in the next subsection.

\subsection{Operadic examples}\label{subsec:chern-examples}

\subsubsection*{Lie locus}
Assume $X=X_{\Lie}\subseteq A_W\cong W\otimes\Lambda^2W^\vee$ and
$(Q_{\Lie})^\vee\cong W\otimes\Lambda^3W^\vee$ encodes the polarized Jacobi identity. If the
$T$--weights of $W$ are $\chi_1,\dots,\chi_m$, then $A_W$ and $(Q_{\Lie})^\vee$ have $T$--weights
\[
  \mathrm{wt}_T(A_W)=\{\chi_i-\chi_j-\chi_k\mid 1\le i\le m, 1\le j<k\le m\},
\]
\[
  \mathrm{wt}_T\bigl((Q_{\Lie})^\vee\bigr)=\{\chi_i-\chi_j-\chi_k-\chi_\ell\mid 1\le i\le m, 1\le j<k<\ell\le m\},
\]
while $\mathfrak g\simeq W\otimes W^\vee$ contributes weights $\chi_i-\chi_j$. Hence, once the
$H$--modules $\mathfrak h$, $\mathcal N(\mu)$, and $H^2_{\inc}(\mu)\cong H^2_{\Lie}(W,\mu)$ are
known, the identity \eqref{eq:ch-identity} becomes an explicit equality in
$\Sym(X^\ast(T))^{W_H}\otimes\mathbb Q$.

On the other hand, the Gram form is the Killing form $\gamma_\mu(v,w)=\Tr(\ad_v\ad_w)$. Thus
$\rho_0(Z)=\dim W$ on any component whose open orbit consists of semisimple Lie algebras, by
Cartan's criterion \cite[\S III.5]{Humphreys1972}. For semidirect products $L=\fsl_2\ltimes M$ with
$M$ abelian one has $\ad_m(L)\subseteq M$ and $\ad_m(M)=0$, hence $M\subseteq\rad(\gamma_\mu)$ and
$\rho_0(Z)\le \dim W-\dim M$. In particular, in the example $L=\fsl_2\ltimes\Sym^{14}(\mathbb C^2)$
from Section~\ref{subsec:anisotropic-example-sl2-sym14}, the open orbit
(Richardson~\cite{Richardson1967}) lies in a component whose generic Gram rank is strictly less
than $\dim W$, while \eqref{eq:ch-identity} records the nontrivial correction term coming from
$H^2_{\Lie}(L)$.

\subsubsection*{Commutative associative locus}

Assume $X=X_{\Comm}\subseteq A_W\cong W\otimes\Sym^2W^\vee$. Then
\[
  \mathrm{wt}_T(A_W)=\{\chi_i-\chi_j-\chi_k\mid 1\le i\le m, 1\le j\le k\le m\}.
\]
Moreover, $(Q_{\Comm})^\vee$ is the $G$--subrepresentation of $W\otimes(W^\vee)^{\otimes 3}$
spanned by polarized commutative associators, so its $T$--weights occur among
$\chi_i-\chi_j-\chi_k-\chi_\ell$ (cf. \cite{Quillen1968,Andre1974}).  Thus \eqref{eq:ch-identity}
again reduces to describing $\mathfrak h$, $\mathcal N(\mu)$, and $H^2_{\inc}(\mu)$ as
$H$--modules.

The Gram form is the regular trace form $\gamma_\mu(v,w)=\Tr(L_vL_w)=\Tr(L_{vw})$, and in
characteristic $0$ its nondegeneracy coincides with separability (finite \'etaleness)
\cite{DeMeyerIngraham1971}. Over an algebraically closed field, the finite \'etale locus consists
of the split algebra $\Bbbk^{\dim W}$ (cf. \cite[\S II.2]{BourbakiCA}), hence any component whose
open orbit meets it has $\rho_0(Z)=\dim W$. Away from separability, the nilradical $\mathfrak n$
satisfies $\mathfrak n\subseteq\rad(\gamma_\mu)$ because $L_x$ is nilpotent for $x\in\mathfrak n$,
and therefore $\rho_0(Z)\le \dim W-\dim\mathfrak n$.

\subsubsection*{Associative locus}

Assume $X=X_{\Ass}\subseteq A_W\cong W\otimes (W^\vee)^{\otimes 2}$. Then
\[
  \mathrm{wt}_T(A_W)=\{\chi_i-\chi_j-\chi_k\mid 1\le i,j,k\le m\},
\]
and $(Q_{\Ass})^\vee$ is spanned by polarized associators inside $W\otimes (W^\vee)^{\otimes 3}$,
so its weights occur among $\chi_i-\chi_j-\chi_k-\chi_\ell$. As before, the new input needed to
make \eqref{eq:ch-identity} explicit is the stabilizer data for $\mathfrak h$, $\mathcal N(\mu)$,
and the deformation module $H^2_{\inc}(\mu)$.

The Gram form is again a trace form, now built from right multiplications. On the separable locus
(in characteristic $0$, equivalent to semisimplicity), one has $A\cong\prod_i M_{n_i}(\Bbbk)$
\cite[\S 8]{Pierce1982}, and the trace form is nondegenerate, hence $\rho_0(Z)=\dim W$ on any
component whose open orbit meets the separable locus. If $J$ is the Jacobson radical, then $J$ is
nilpotent; for $x\in J$ one has $R_xR_y=R_{yx}$ nilpotent for all $y$, hence $\Tr(R_xR_y)=0$ and
$J\subseteq\rad(\gamma_\mu)$, giving $\rho_0(Z)\le \dim W-\dim J$.

\subsubsection*{Leibniz locus}

Assume $X=X_{\Leib}$ parametrizes right Leibniz laws. The Gram form is defined using right
multiplications $R_v(x)=\mu(x,v)$. Let
\[
  \Leib(L):=\Span\{\mu(v,v)\mid v\in W\}
\]
be the Leibniz kernel. The right Leibniz identity implies $\mu(x,\mu(v,v))=0$ for all $x,v$, i.e.
$R_z=0$ for $z\in\Leib(L)$, hence $\Leib(L)\subseteq\rad(\gamma_\mu)$ and therefore
$\rho_0(Z)\le \dim W-\dim\Leib(L)$ \cite[\S1]{Loday1993}. In particular, $\rho_0(Z)=\dim W$ forces
$\Leib(L)=0$, and over $\mathrm{char}(\Bbbk)=0$ this implies skew-symmetry, so the law is Lie; on
that locus $R_v=-\ad_v$ and $\gamma_\mu$ is the Killing form.

\begin{remark}\label{rem:weights-gram-simultaneous}
  Since $\gamma_\mu$ is $H$--fixed, it is $T$--equivariant for any maximal torus $T\subseteq H$. If
  $W=\bigoplus_\chi W_\chi$ is the $T$--weight decomposition, then $\gamma_\mu(W_\chi,W_{\chi'})=0$
  unless $\chi'=-\chi$. Thus the same weight bookkeeping that enters \eqref{eq:ch-weights} also
  constrains the $T$--character of $\rad(\gamma_\mu)$, and hence the possible values of $\rho_0(Z)$
  via \eqref{eq:gram-generic-open-orbit}.  This is often a convenient first consistency check
  before undertaking a full evaluation of \eqref{eq:ch-identity}.
\end{remark}

\subsection*{Acknowledgements}

The author acknowledges the use of large language models for the following tasks: writing the
\texttt{sagemath} code for the numerical calculations in
Section~\ref{subsec:anisotropic-example-sl2-sym14}, and copy-editing the manuscript.

%\bibliographystyle{amsalpha}
%\bibliography{references}

\newcommand{\etalchar}[1]{$^{#1}$}

\end{document}